\documentclass[reqno]{amsart}

\usepackage[utf8]{inputenc}
\usepackage[english]{babel}

\usepackage{soul}
\usepackage{braket}
\usepackage{amsmath}
\usepackage{amstext}
\usepackage{amssymb}
\usepackage{amsthm}
\usepackage{mathtools}
\usepackage{stmaryrd}
\usepackage{mathrsfs}
\usepackage{extarrows}
\usepackage{afterpage}
\usepackage{tabu}
\usepackage{array}
\usepackage{microtype}
\usepackage{graphicx}
\usepackage{amsfonts}
\usepackage{units}
\usepackage[shortlabels]{enumitem}
\usepackage{url}
\usepackage{bm}
\usepackage{tikz}
\usetikzlibrary{matrix,arrows,calc,fit,cd,positioning,intersections,arrows.meta,decorations.markings,decorations.pathmorphing,shapes,calligraphy}
\usepackage{tikz-3dplot}
\usepackage{subcaption}

\usepackage{hyperref}

\usepackage{xcolor}
\hypersetup{
    colorlinks,
    linkcolor={red!50!black},
    citecolor={blue!50!black},
    urlcolor={blue!80!black}
}

\usepackage[margin=1cm]{caption}
\usepackage{colortbl}
\usepackage{framed}
\usepackage{thmtools}
\usepackage{cleveref}

\allowdisplaybreaks[4]

\theoremstyle{definition}

\declaretheorem[name=Theorem, numberwithin=section]{theorem}
\declaretheorem[name=Lemma, sibling=theorem]{lemma}
\declaretheorem[name=Definition, sibling=theorem]{definition}
\declaretheorem[name=Remark, sibling=theorem]{remark}

\declaretheorem[name=Proposition, sibling=theorem]{proposition}
\declaretheorem[name=Corollary, sibling=theorem]{corollary}
\declaretheorem[name=Conjecture, sibling=theorem]{conjecture}

\newtheorem*{mainthm*}{Main Theorem}

\newcommand{\inter}{[1,w]}
\newcommand{\interpq}{[1,w_{p,q}]}
\newcommand{\refl}{\mathcal R}
\newcommand{\length}{\ell_{\mathcal R}}
\newcommand{\reflectionsb}{\mathcal{R}^{B}_{n}}
\newcommand{\reflectionsd}{\mathcal{R}^{D}_{n}}
\newcommand{\E}{\mathcal E}
\newcommand{\pq}{p,q}
\newcommand{\wpq}{w_{p,q}}
\newcommand{\uvr}{[u,v]}
\newcommand{\wpp}{w_{p}}

\newcommand{\kre}{K^{v}_{u}}
\newcommand{\ler}{\le}

\DeclareMathOperator{\MC}{MC}
\DeclareMathOperator{\NC}{NC}
\DeclareMathOperator{\NZ}{NZ}
\DeclareMathOperator{\ZZ}{Z}

\newcommand{\annularncB}{\NC^{B}}
\newcommand{\annularncD}{\NC^{D}}
\newcommand{\annularncBpq}{\NC^{B}(p,q)}
\newcommand{\annularncDpq}{\NC^{D}(p,q)}

\title[Noncrossing combinatorics of quasi-Coxeter elements]{Noncrossing combinatorics\\of quasi-Coxeter elements}
\author{Eugenio Cainelli \and Giovanni Paolini \and Riccardo Zaccaria}

\begin{document}

\begin{abstract}
       We study the combinatorics of the intervals below quasi-Coxeter elements in the absolute order of finite Coxeter groups.
    In particular, we prove that these intervals are EL-shellable, except possibly for quasi-Coxeter elements of types $E_7$ and $E_8$, and we compute their zeta polynomials.
    We also show that a total order of the reflections yields an EL-labeling if it does so on all rank-two subintervals, which proves a conjecture of Williams for Coxeter elements.
    In order to obtain these results, we prove them for the infinite families, and we treat the exceptional types by computer.
    In addition, we study the related interval monoids, and we find a counterexample to cancellativity for every proper quasi-Coxeter element of type $D_n$ ($n\ge5$), $E_6$, $E_7$, $E_8$; in particular, these monoids do not embed in the corresponding interval groups.
    This gives a negative answer to a question posed by Baumeister, Holt, Neaime, and Rees.

\end{abstract}

\maketitle

\section{Introduction}

Let $W$ be a Coxeter group and let $\refl$ be its set of reflections.
If $w \in W$ is a Coxeter element, then the interval $[1, w]$ in the absolute order is the famous noncrossing partition poset associated with $W$.
In type $A$, this recovers the classical lattice of noncrossing set partitions.
The combinatorics of $[1, w]$ has many connections with algebra and topology (e.g., quiver representation theory \cite{ingalls2009noncrossing}, braid groups and Artin groups \cite{Bessis.monoid}). Such connections are especially well-developed in the case of finite Coxeter groups, where $[1, w]$ is an EL-shellable lattice and its enumerative properties are controlled by Coxeter-Catalan combinatorics \cite{brady2005latticesfiniterealreflection, AthanasiadisBradyWatt2005,  armstrong2007generalizednoncrossingpartitionscombinatorics}.

The goal of this paper is to generalize the combinatorial study of $[1, w]$ to quasi-Coxeter elements $w$.
Recall that an element $w\in W$ is quasi-Coxeter if it admits a reduced reflection factorization
\[
w=t_1\cdots t_n,
\]
where \(n=\operatorname{rank}(W)\), such that the reflections \(t_1,\ldots,t_n\) generate \(W\).
Every Coxeter element is quasi-Coxeter, but the converse need not hold.
Quasi-Coxeter elements naturally arise in the study of the Hurwitz action on reflection factorizations; recently, interval groups arising from $[1,w]$ have been studied in relation to Artin groups \cite{baumeister2023interval-part1, baumeister2023interval-part2, baumeister2023isomorphism}.
Proper quasi-Coxeter elements (i.e., quasi-Coxeter elements that are not Coxeter elements) appear in type $D$ and in all exceptional types of rank at least three.

While the lattice property is known to fail in general, our first main result is
that EL-shellability holds for all quasi-Coxeter elements except possibly those of the exceptional types $E_7,E_8$:

\begin{theorem}
    Let $w$ be a quasi-Coxeter element of a finite irreducible Coxeter group $W$. If $W$ is not of type $E_7, E_8$, then the interval $[1,w]$ is EL-shellable.
    \label{thm:main-shellability}
\end{theorem}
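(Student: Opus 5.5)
The plan is to construct an explicit EL-labeling given by a total order on the reflections $\refl$. An edge $u \lessdot ut$ of $[1,w]$ receives the label $t$, so maximal chains of $[1,u^{-1}v]$ correspond to reduced reflection factorizations of $u^{-1}v$. The argument has three steps.

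\textbf{Step 1: reduce to rank two.} First I would prove a local-to-global criterion. If a total order on $\refl$ yields an EL-labeling on every rank-two subinterval $[u,v]$ of $[1,w]$, then it yields an EL-labeling of all of $[1,w]$. I would argue by induction on the length of $[u,v]$, exchanging adjacent labels via the Hurwitz action.
\begin{itemize}
\item Existence of an increasing chain: take a lexicographically minimal chain, or any chain. If it has a descent $t_i > t_{i+1}$, the rank-two subinterval on those two steps has a unique increasing chain, so we can replace the descent. A suitable potential (for example, the inversion count of the label word) strictly decreases, which gives existence.
\item Uniqueness and lexicographic minimality: the lexicographically first chain has no descents, because otherwise the rank-two replacement would give a lexicographically smaller chain. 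An increasing chain must begin with the minimal reflection $t \le u^{-1}v$ among atoms. I would show this via the rank-two condition combined with induction: any increasing chain whose first label is not minimal can be compared with a chain starting at the minimal atom inside a rank-two interval.
\end{itemize}
I expect this uniqueness claim to be the main obstacle. The natural approach is the argument used for Coxeter elements by Athanasiadis--Brady--Watt, replacing the lattice property, which is unavailable here, with the fact that a rank-two interval below a quasi-Coxeter element is the noncrossing poset of a dihedral reflection subgroup. That fact holds because parabolic-type subintervals of $[1,w]$ are intervals of the same kind. This step also proves Williams' conjecture for Coxeter elements.

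\textbf{Step 2: classical infinite families.} Types $A$, $B$, $I_2(m)$, and $H_3$ have only Coxeter quasi-Coxeter elements, where EL-shellability is already known. For type $D_n$, the proper quasi-Coxeter elements are, up to conjugacy, $w_{p,q}$, a product of two negative cycles of lengths $p+q=n$. I would model $[1,w_{p,q}]$ combinatorially, for example by annular noncrossing partitions of type $D$ on an annulus with $p$ and $q$ marked points, in the spirit of the notations $\annularncDpq$. I would then specify an explicit total order of the reflections, reading points around the outer circle and then the inner circle, with the paired reflections $(i,j)(-i,-j)$ and $(i,-j)(-i,j)$ ordered appropriately. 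Finally I would check the rank-two condition. Each rank-two interval is of type $A_1\times A_1$, $A_2$, or an $I_2$-type arising from the two boundaries, so this reduces to a finite case analysis on the relative positions of the at most four points involved.

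\textbf{Step 3: exceptional types.} For $F_4$, $H_4$, and $E_6$, I would use a computer search. For each conjugacy class of quasi-Coxeter elements, I would search for a total order of $\refl$ satisfying the rank-two criterion, for instance greedily or via a SAT or backtracking search over orders constrained by the rank-two intervals, and then verify the criterion. $E_7$ and $E_8$ are excluded because the search is infeasible or inconclusive there.
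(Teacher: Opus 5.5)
Your proposal has two genuine gaps. The first makes your method fail outright for type $H_4$.

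\textbf{The $H_4$ classes.} Your Step~3 only searches for a total order of $\refl$ satisfying the rank-two criterion, i.e.\ a $w$-compatible order. For nine of the ten proper quasi-Coxeter classes of $H_4$, namely $H_4(a_2),\dots,H_4(a_{10})$, no $w$-compatible order exists (\Cref{thm:exceptional-compatible-exists}). Compatibility is necessary, so \emph{no} total order of $\refl$ makes the natural labeling an EL-labeling there. Your search would return nothing, and you would get no labeling for these intervals. The statement does not exclude $H_4$, so a new idea is required. The paper finds, by computer, EL-labelings of these nine intervals that are not induced by an order on the reflections.

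Your Step~2 is also wrong that $H_3$ and $I_2(m)$ have only Coxeter quasi-Coxeter elements. For $I_2(m)$ this is harmless, since $[1,w]$ has rank two. The classes $H_3(a_1)$ and $H_3(a_2)$, however, must be added to the computer check; compatible orders do exist there.

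\textbf{The local-to-global step.} Step~1 is exactly the first part of \Cref{thm:main-shellability-refined} (Williams's conjecture, for Coxeter elements), and your sketch does not prove uniqueness. Suppose two rising chains of $[1,x]$ start at distinct atoms $s\prec t$. In general no rank-two element lies above both, so there is no rank-two interval in which to compare them. Already below $(1,2,3,4)$ in type $A_3$, the only common upper bound of $(1,3)$ and $(2,4)$ is $(1,2,3,4)$ itself. In the exceptional types, minimal common upper bounds of two atoms can have rank $n$ (\Cref{rem:thomas-exceptional}). Knowing that rank-two intervals are dihedral noncrossing posets says nothing about these configurations.

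The paper's argument runs as follows.
\begin{enumerate}[label=(\roman*)]
\item By induction on rank, every interval below a coatom has at most one rising chain. This uses that $wr$ is parabolic quasi-Coxeter, so $[1,wr]$ is a product of smaller intervals.
\item Hence two rising chains of $[1,w]$ must pass through distinct coatoms $wr$ and $wr'$, with $r$ and $r'$ both $w$-terminal.
\item A rank-two argument shows that $r$ and $r'$ must be $w$-crossing.
\item A type-by-type analysis shows that two $w$-crossing reflections cannot both be terminal. It uses \Cref{lem:square} for crossing quadrilaterals, and long chains of inequalities through many rank-two elements in the remaining cases of types $B$ and $D$.
\end{enumerate}
In the exceptional types, the paper instead verifies EL directly by computer for each compatible order. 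You could do the same for $F_4$, $E_6$ and $H_3$.

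Without an argument like this, your type~$D$ step only shows that your explicit order is $w_{p,q}$-compatible, not that it yields an EL-labeling.
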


We believe that the exceptions in \Cref{thm:main-shellability} are due to the limits of our computations in the exceptional cases.

\begin{conjecture}\label{conj:EL}
    The interval $[1,w]$ is EL-shellable for every quasi-Coxeter element $w$ of a finite irreducible Coxeter group.
\end{conjecture}

EL-shellability is a central property in poset topology.
Numerous posets associated with Coxeter groups are known to be EL-shellable, including: noncrossing partition posets of finite, affine, and rank-three Coxeter groups \cite{AthanasiadisBradyWatt2005, paolini2021proof, delucchi2024dual}; Bruhat intervals \cite{dyer1993hecke}; posets of layers of arrangements associated to root systems \cite{Delucchi_Girard_Paolini_2019, paolini2020dowling}.
The lattice property and EL-shellability of noncrossing partition posets have been crucial ingredients in the study of the $K(\pi, 1)$ property for spaces associated to Artin groups \cite{paolini2025dual}; for this reason, we expect \Cref{thm:main-shellability} to prove useful in the study of interval groups arising from quasi-Coxeter elements.

The most natural attempt at EL-shellability of $[1,w]$ consists in using the generating set $\refl$ itself as labels, with a suitable total order.
For this to work, the total order must satisfy a necessary condition called $w$-compatibility: every rank-two interval $[u,v] \subseteq [1, w]$ must have a unique rising chain.
In the case of Coxeter elements, Williams conjectured that the $w$-compatibility condition is also sufficient (see \cite[Section 7.3]{defant2025permutahedron}).
Refining \Cref{thm:main-shellability}, we prove the following:

\begin{theorem}
    Let $w$ be a quasi-Coxeter element of a finite irreducible Coxeter group $W$.
    Then every $w$-compatible total order of $\refl$ yields an EL-labeling of $[1,w]$.
    However, in types $H_4$, $E_7$, and $E_8$, there are quasi-Coxeter elements $w$ that admit no $w$-compatible total order, so that no total order of $\refl$ yields an EL-labeling of $[1,w]$.
    \label{thm:main-shellability-refined}
\end{theorem}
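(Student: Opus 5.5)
The plan has three parts: a general poset-theoretic reduction, a verification that the hypotheses hold below a quasi-Coxeter element, and a case check for the negative part. We label each cover relation $u \lessdot ut$ in $[1,w]$ by the reflection $t = u^{-1}v \in \refl$. The key structural facts about the absolute order hold below any quasi-Coxeter element. Every interval $[u,v] \subseteq [1,w]$ is isomorphic to $[1,u^{-1}v]$, and the labels are preserved under this isomorphism. Every maximal chain of $[u,v]$ corresponds to a reduced reflection factorization of $u^{-1}v$, and these factorizations form a single Hurwitz orbit. For quasi-Coxeter elements the Hurwitz action is transitive on reduced factorizations (Baumeister–Gobet–Roberts–Wegener), and the same holds for parabolic quasi-Coxeter elements of the parabolic subgroups below $w$.

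The core step is a general lemma. Suppose a graded poset has an edge labeling with the following properties:
\begin{enumerate}[(i)]
\item the labeling is invariant in the sense above;
\item any two maximal chains of any interval are connected by a sequence of moves, each replacing a length-two segment by another chain of the same rank-two subinterval (Hurwitz moves are exactly such moves);
\item every rank-two subinterval has a unique rising chain, and this chain is lexicographically first.
\end{enumerate}
Then the labeling is an EL-labeling.

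We prove the lemma by induction on the rank of $[u,v]$. Take the lexicographically minimal maximal chain $c$ of $[u,v]$. It is rising: at any descent, replacing that segment by the rising chain of the corresponding rank-two interval gives a lexicographically smaller chain, using (iii). For uniqueness, suppose $c' \neq c$ is rising. Let $u \lessdot x$ be its first step and $u \lessdot y$ the first step of $c$. If $x = y$, we are done by induction in $[x,v]$. Otherwise the label of $u \lessdot x$ is larger than the label $s$ of $u \lessdot y$. Because the labels of $c'$ increase, every label of $c'$ exceeds $s$. The contradiction should come from showing that the minimal label $s$ among atoms of $[u,v]$ must appear on every rising chain.

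This last claim is where a reflection-specific argument is needed, and I expect it to be the main obstacle. For noncrossing partitions the standard argument uses the fact that the reflections below $u^{-1}v$ in a parabolic subgroup carry a total order. I would instead prove the claim through the rank-two compatibility condition, by induction on length. Write the rising chain as $t_1 < \dots < t_k$ with $t_1 > s$. Since $s \le u^{-1}v = t_1 \cdots t_k$, the reflection $s$ lies in the fixed-space complement. Hence there is a position $i$ where $s \le t_i t_{i+1}$ in absolute order, obtained by pushing $s$ leftwards via Hurwitz moves, or by choosing a minimal suffix $t_i \cdots t_k$ with $s$ below it. Work in the rank-two interval $[1, t_i t_{i+1}]$, which is a dihedral parabolic containing $s$. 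By $w$-compatibility the unique rising chain there starts with the smallest reflection of that dihedral group below $t_i t_{i+1}$, and this reflection is at most $s < t_i$. So $t_i t_{i+1}$ is not rising, a contradiction. The case where $s$ is only below a longer suffix reduces to this case by induction on the suffix length.

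Once the general lemma is established, the first assertion follows for every quasi-Coxeter $w$ uniformly, with no case analysis. For the negative assertion, a finite computer check in types $H_4$, $E_7$, $E_8$ should suffice. For each conjugacy class of quasi-Coxeter elements, collect the rank-two intervals $[u,v] \subseteq [1,w]$, each equivalent to a pair of reflections under a dihedral parabolic element. Encode ``unique rising chain'' as constraints on the relative order of reflections in each dihedral parabolic. Then use SAT or cycle detection to show that these constraints are unsatisfiable, i.e. that they force a directed cycle in the required order.
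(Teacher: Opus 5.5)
\textbf{The gap is in the step you flag yourself, and that step carries the whole theorem.} Your reduction to showing that every rising chain of $[1,x]$, with $x=u^{-1}v$, starts with the least reflection $s$ below $x$ is fine. The argument for that claim does not work.

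It is false that $s\le t_1\cdots t_k$ forces $s\le t_it_{i+1}$ for some $i$. In type $A_3$, $(1,4)\le(1,2)(2,3)(3,4)=(1,2,3,4)$, yet $(1,4)$ lies below neither $(1,2)(2,3)=(1,2,3)$ nor $(2,3)(3,4)=(2,3,4)$. Pushing $s$ by Hurwitz moves does not help, because the new factorization is no longer the rising chain $c'$.

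Your induction on rank does dispose of one case. Suppose $s$ lies below a proper consecutive subproduct $t_i\cdots t_j$. Then $[1,t_i\cdots t_j]$ is label-isomorphic to a proper subinterval. Its unique rising chain starts with a reflection $\preceq s\prec t_1\preceq t_i$, so $(t_i,\dots,t_j)$ cannot be rising, a contradiction.

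What remains is $s$ below $t_1\cdots t_k$ but below no proper consecutive subproduct. Here there is no shorter suffix to induct on, and no single rank-two interval $[1,t_it_{i+1}]$ contains the contradiction. Even in the $A_3$ example, a compatible order rules out a rising chain $(1,2)\prec(2,3)\prec(3,4)$ with $(1,4)$ minimal only by combining two facts:
\begin{itemize}
\item compatibility at $(2,3,4)=t_2t_3$ forces $(2,3)\prec(2,4)$;
\item compatibility at $(1,2,4)$ forces $(2,4)\prec(1,2)$.
\end{itemize}
Neither element is a product $t_it_{i+1}$ lying above $s$.

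No argument using only your conditions (ii) and (iii) can close this. Take the rank-three poset with atoms $x_1,x_2,x_3$, coatoms $y_1,y_2$, and relations $x_1,x_2<y_1$ and $x_2,x_3<y_2$. Label the maximal chains through $(x_1,y_1)$, $(x_2,y_1)$, $(x_2,y_2)$, $(x_3,y_2)$ by $(1,8,9)$, $(7,0,9)$, $(7,6,4)$, $(2,3,4)$. This satisfies (ii) and (iii), yet has two rising maximal chains. This matches Mühle's criterion recalled in the paper, where compatibility must be supplemented by total well-coveredness. It also matches the fact that even Thomas's argument (\Cref{rem:thomas.proof}) only reduces Williams's conjecture to a finite check.

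\textbf{How the paper proceeds.} It inducts over coatoms: each $wr$ is a parabolic quasi-Coxeter element whose components are again of type $A$, $B$, $D$. Two rising chains of $[1,w]$ would then pass through coatoms $wr$ and $wr'$ with $r,r'$ both $w$-terminal. The paper shows such $r,r'$ must be $w$-crossing. It then excludes every crossing configuration by explicit cycles of compatibility inequalities running through several rank-two elements: \Cref{lem:square} and the case analysis in types $B$ and $D$. The exceptional types, including the positive assertion there, are settled by computer.

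Your plan for the negative part, encoding compatibility as constraints and checking unsatisfiability by computer, agrees with the paper. Note that each rank-two condition is a disjunction (exactly one cyclic ascent), so this is a SAT problem rather than plain cycle detection.
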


Williams's conjecture was proved by Thomas up to a finite check, but the proof has never been published.
During preparation of this manuscript, we communicated with Williams and Thomas; with permission, we include Thomas's proof in \Cref{rem:thomas.proof}.

On the enumerative side, recall that the zeta polynomial $Z_P(m)$ of a finite poset $P$ counts the number of multichains
\[
x_1\leq x_2\leq \cdots \leq x_{m-1}.
\]
In particular, $Z_P(2) = |P|$, and $Z_P(-1) = \mu_P(\hat 0, \hat 1)$ if $P$ is bounded with minimum element $\hat 0$ and maximum element $\hat 1$.
The zeta polynomials of finite noncrossing partition lattices have well-known formulas in terms of the degrees of the corresponding Coxeter groups.
We extend this classical result to intervals $[1,w]$ below quasi-Coxeter elements:

\begin{theorem}\label{thm:zeta.polynomial.D}
    If $W$ is of type $D_n$ and $w$ is a quasi-Coxeter element conjugate to $w_{p,q} = [1,\dots,p][p+1,\dots,n]$, then the zeta polynomial of $P = [1,w]$ is
    \[ Z_P(m) = \frac{p+q+2pq(m-1)}{(p+q)m} \binom{mp}{p} \binom{mq}{q}. \]
    The zeta polynomials for proper quasi-Coxeter elements in the exceptional types of rank at least three are recorded in \Cref{tab:exceptional-zeta.Weyl} and \Cref{tab:exceptional-zeta.H}.
    \label{thm:main-zeta}
\end{theorem}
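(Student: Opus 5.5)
To prove \Cref{thm:main-zeta} in type $D_n$, I would first give a combinatorial model of $[1,w_{p,q}]$ and then count multichains in that model directly. Elements of $W(D_n)$ are even signed permutations. With $w_{p,q} = [1,\dots,p][p+1,\dots,n]$ (two negative cycles, $q=n-p$), I would realize $[1,w_{p,q}]$ as a poset of noncrossing partitions of an annulus: $p$ marked points on the outer boundary, $q$ on the inner boundary, each point appearing with its antipode $\pm i$. This is the notation $\annularncDpq$ from the preamble. The point is that $[1,w_{p,q}]$ sits inside the type $B_n$ interval below the corresponding element. A chain $x_1\le\dots\le x_{m-1}$ in $[1,w]$ corresponds, via Kreweras-type complements $x_i^{-1}x_{i+1}$, to a minimal factorization
\[ w = y_1 y_2\cdots y_m, \qquad \length(w)=\textstyle\sum \length(y_i). \]
So $Z_P(m)$ counts such $m$-factorizations, equivalently $m$-divisible annular noncrossing structures.

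The key steps, in order, would be:
\begin{enumerate}
\item Prove the combinatorial description of the interval, i.e.\ which elements $u\le w_{p,q}$ occur, by cycle type. An element is either a product of cycles that splits compatibly between the two circles, possibly with each half containing a paired negative cycle, or it contains "connecting" cycles that cross between the two circles.
\item Sort the multichains into two classes according to whether the annulus is eventually bridged. The term $\frac{p+q}{(p+q)m}\binom{mp}{p}\binom{mq}{q}$ should account for the bridged or connected configurations. The term $\frac{2pq(m-1)}{(p+q)m}\binom{mp}{p}\binom{mq}{q}$ should account for the configurations in which the first factor that connects the two circles occurs at one of $m-1$ positions, with $p$ and $q$ choices of attaching points and a factor $2$ for the sign.
\item Evaluate these counts using known type $B$ formulas. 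The number of $m$-multichains in $\NC^B(k)$ is $\binom{mk}{k}$, so multichains restricted to a single boundary circle contribute $\binom{mp}{p}$ and $\binom{mq}{q}$. A cyclic-lemma (rotation) argument on the annulus should then produce the prefactor $1/((p+q)m)$.
\end{enumerate}

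As a check, $m=1$ gives $1$ and $m=2$ should reproduce $|[1,w]|$, which I can verify by computer for small $p,q$.

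An alternative route would be to compute $Z_P(m)$ as $\sum_{x\le w} Z_{[1,x]}(m-1)$ recursively, using the product structure of lower intervals, whose factors are type $A$/$B$ noncrossing lattices and smaller $D_{p',q'}$ intervals. The closed form would then follow by induction on $n$ via Vandermonde-type identities. I would keep this as a fallback and use it to confirm the closed form symbolically.

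The main obstacle is step 1 together with the bijection in step 2. Non-Coxeter quasi-Coxeter elements do not give lattices, and the annular model must record exactly when two connecting cycles can coexist. Getting the multiplicity $2pq(m-1)$ right, rather than the variant that appears for Coxeter elements of type $\tilde{A}$/$D$, is the delicate part. I would first establish step 1 rigorously from reflection-length and fixed-space criteria: $u\le w$ iff $\length(u)+\length(u^{-1}w)=\length(w)$, with length computed as $n-\dim\ker(u-1)$. The exceptional types are then a finite computer enumeration of multichains.
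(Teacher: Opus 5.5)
Your split into multichains that are ``eventually bridged'' and those that are not is the right one. It is exactly the paper's split into multichains of $\NC^{D}(p,q)$ containing an element of positive connectivity, and multichains made only of zero-connectivity elements. Step 1 can simply be quoted from Nica--Oancea. The gap is that you never count either class: the proposal reverse-engineers a reading of the target formula rather than deriving it. You also attach the terms inconsistently.

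\textbf{The unbridged class.} The term $\frac{1}{m}\binom{mp}{p}\binom{mq}{q}$ counts the \emph{never}-bridged multichains. For $p=q=1$ the interval is Boolean of rank two, $Z_P(m)=m^2$, and the never-bridged multichains are the $m$ multichains in $\{1<w\}$.
\begin{itemize}
\item Each such element is a product $uv$ with $u\le w_p$ and $v\le w_q$ in type $B$.
\item It lies in $D_n$ if and only if $u$ and $v$ both have a zero-block or both lack one.
\item Without this parity constraint the count would be $\binom{mp}{p}\binom{mq}{q}$.
\end{itemize}
The missing idea is an equidistribution statement. For multichains of $[1,w_p]_B$, the index at which a zero-block first appears takes each of its $m$ possible values equally often, so each class has $\frac{1}{m}\binom{mp}{p}$ elements. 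This holds because $w_p$ is odd: the Kreweras complements $K_1^{w_p}$, and $K_a^{w_p}$ for $a$ without zero-block, are anti-automorphisms exchanging elements with and without a zero-block (\Cref{lem:number.multichains.B}). Summing the $m$ matched products gives the factor $\frac{1}{m}$. No cyclic lemma on the annulus is involved, and the $p+q$ in the denominator plays no role here.

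\textbf{The bridged class.} Your reading ``$(m-1)$ positions, $pq$ attaching points, a factor $2$'' cannot be a product of independent choices, for two reasons.
\begin{itemize}
\item The leftover factor $\frac{1}{(p+q)m}\binom{mp}{p}\binom{mq}{q}$ is not an integer in general; it is $\frac{3}{2}$ for $p=q=1$, $m=3$.
\item The count is not uniform in the position of the first bridged element; for $p=q=1$, $m=3$ the two positions give $4$ and $2$.
\end{itemize}
What is needed instead is the following.
\begin{enumerate}
\item[(a)] Every multichain of $\NC^{B}(p,q)$ containing an element $x$ with $c(x)>0$ is already a multichain of $\NC^{D}(p,q)$. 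Below $x$: $x$ has no zero-block, so nothing below it does. Above $x$: $[x,w_{p,q}]\cong[1,K_1^{w_{p,q}}(x)]$ and $c(K_1^{w_{p,q}}(x))=c(x)>0$, so the same argument applies.
\item[(b)] These multichains are therefore counted by Goulden--Nica's type $B$ formula as $\sum_{c\ge1}2c\binom{mp}{p-c}\binom{mq}{q+c}$ (\Cref{thm:zeta.polynomial.B}).
\item[(c)] A telescoping induction sums this to $\frac{2(m-1)pq}{m(p+q)}\binom{mp}{p}\binom{mq}{q}$; this is where $p+q$ enters.
\end{enumerate}

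Your fallback recursion $Z_P(m)=\sum_{x\le w}Z_{[1,x]}(m-1)$ is valid but only relocates the problem. It requires enumerating the elements of $[1,w_{p,q}]$ by the isomorphism type of their lower intervals, which you have not done.

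The computer enumeration for the exceptional types matches what the paper does.
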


Whenever $[1,w]$ is EL-shellable (in particular, if $W$ is not of type $E_7$ or $E_8$), \Cref{thm:main-shellability,thm:main-zeta} explicitly describe the homotopy type of the open interval $(1,w)$: it is a wedge of $(n-2)$-spheres, and the number of spheres is given by $|\mu_P(1, w)| = |Z_P(-1)|$.

Our last result concerns the interval monoid $M_w$, defined by the same presentation as the interval group $G_w$.
If $w$ is a Coxeter element, $M_w$ is the well-known dual Artin monoid \cite{Bessis.monoid}; since $[1,w]$ is a lattice, $M_w$ is a Garside monoid, so it is cancellative and embeds in $G_w$, which is isomorphic to the Artin group of $W$.
This is a key construction in the dual theory of Artin groups.
For a proper quasi-Coxeter element $w$ of a finite irreducible Coxeter group $W$ of rank at least three, the interval $[1,w]$ is a lattice if and only if $W$ is of type $H_3$, or $W$ is of type $H_4$ and $w$ has order 30 \cite[Theorem~2.12]{baumeister2023interval-part2}. Baumeister, Holt, Neaime, and Rees asked whether $M_w$ is still cancellative when the lattice property fails and whether it embeds in $G_w$ \cite[Section~4, Question~(d)]{baumeister2023interval-part2}.
We answer this question negatively for the types $D_n$ with $n \geq 5$, $E_6$, $E_7$, and $E_8$:

\begin{theorem}\label{thm:main-noncancellative}
    Let $w$ be a proper quasi-Coxeter element of a finite Coxeter group $W$ of type $D_n$ with $n \geq 5$, or of type $E_6$, $E_7$, or $E_8$.
    Then the interval monoid $M_w$ is neither left nor right cancellative.
    In particular, the natural map $M_w \to G_w$ is not injective.
\end{theorem}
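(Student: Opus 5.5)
The plan is to exhibit, for each such $w$, explicit elements $a,b,c\in M_w$ with $ab=ac$ but $b\neq c$ (and symmetrically $ba=ca$ with $b\ne c$). Recall that $M_w$ is generated by the elements of $[1,w]$, with relations $u\cdot v = uv$ whenever $u\le uv$ in $[1,w]$ and $\ell_{\refl}(uv)=\ell_{\refl}(u)+\ell_{\refl}(v)$. Non-cancellativity typically comes from the failure of the lattice property. Suppose $x,y\in[1,w]$ have two distinct minimal common upper bounds $z_1,z_2$ of the same length. Then the "complements" $x^{-1}z_1$ and $x^{-1}z_2$ can be chosen so that, after multiplying further, the words $x\cdot(x^{-1}z_i)\cdot(z_i^{-1}g)$ agree for a common element $g$ above both $z_i$. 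This equality is not forced on the level of the right factors. Concretely, I would first look for reflections $r,s,t,t'$ in $[1,w]$ and an element $g\le w$ with
\[
r\,t\,(\ldots)=g=r\,t'\,(\ldots),
\]
such that $t\cdot(\ldots)$ and $t'\cdot(\ldots)$ give distinct elements of $M_w$.

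Step one is the search for a small witness in type $D_5$. Take $w=w_{p,q}$ with $p,q\ge2$ (proper quasi-Coxeter in $D_n$ corresponds, up to conjugacy, to $w_{p,q}$ with $p,q\ge 2$, hence $n\geq 4$). I would locate a rank-three element $g\le w$ such that $[1,g]$ is not a lattice; for $D_4$ this fails to give non-cancellativity, which is why $n\ge5$ is needed. Within $[1,g]$ I would pick two atoms $r,s$ with two upper covers $z_1,z_2$. This yields factorizations $g=r\,a_1\,b_1=r\,a_2\,b_2$ and so $r\cdot(a_1b_1)=r\cdot(a_2 b_2)$ in $M_w$. To show $a_1b_1\ne a_2b_2$ as monoid elements, I would use an invariant. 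One option is the set of reduced words: the relations of $M_w$ are length-preserving and homogeneous, so two products are equal in $M_w$ iff their words of reflections are connected by relations within $[1,w]$. Since $a_1b_1\neq a_2b_2$ already as group elements of $W$ in a suitable choice, the distinctness is immediate. More precisely, I would choose them with $r\,x=r\,y$ in $M_w$ where $x,y$ are different elements of $W$. This can happen only if the element $rx$ is a product in $M_w$ not equal to any single element of $[1,w]$, i.e.\ it sits outside $[1,w]$ in length. So the key is to find a product of length exceeding $n$ or a non-prefix product whose two expressions are connected by relations through different elements of $W$, since $M_w\to W$ is a well-defined homomorphism.

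Step two is to propagate the witness to $D_n$ for all $n\ge5$ and all $p,q$. I would use the parabolic structure: the interval $[1,w_{p,q}]$ contains $[1,u]$ for $u$ a proper quasi-Coxeter element of a parabolic subgroup of type $D_5$, and the submonoid generated by $[1,u]$ maps into $M_w$. To ensure the relation $ab=ac$ persists with $b\ne c$, I map further to $W$ (or to a quotient detecting $b\ne c$). Right cancellativity follows by applying the anti-automorphism $u\mapsto u^{-1}$ composed with conjugation, which maps $[1,w]$ to $[1,w^{-1}]$, conjugate to $[1,w]$. For $E_6,E_7,E_8$ each proper quasi-Coxeter class would be handled by a computer search for the same pattern, or by finding a $D_5$-type parabolic configuration inside.

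Non-injectivity of $M_w\to G_w$ is then immediate, since $G_w$ is a group and hence cancellative. The main obstacle is step one: finding the explicit configuration and rigorously certifying $b\neq c$ in $M_w$ rather than just in $W$. I would try first to choose the witness so that $b$ and $c$ already have different images in $W$. If that is impossible because equal products force equal images, I would instead use a combinatorial invariant, for example the equivalence class of reduced words under the relations restricted to the (finite) set of words of length $\le 2n$.
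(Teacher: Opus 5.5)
Your plan has a genuine gap at its core: the witness you propose to look for cannot exist, and the certification you propose cannot work. If $g\le w$ and $g=r\,a_1b_1=r\,a_2b_2$ are reduced, then $a_1b_1$ and $a_2b_2$ are reduced factorizations of the same element $rg\le w$. They are therefore directed paths in the Hasse diagram of $[1,w]$ with the same endpoints, so they are equal in $M_w$ by the very definition of the presentation. Likewise, your complements $(x^{-1}z_1)(z_1^{-1}g)$ and $(x^{-1}z_2)(z_2^{-1}g)$ both equal $x^{-1}g$ in $M_w$. So a failure of cancellation can never be witnessed by a product that is an element of $[1,w]$, and a non-lattice configuration inside $[1,w]$ gives nothing by itself (the paper reports that $M_w$ is cancellative in type $D_4$). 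Moreover, in type $D_n$ every rank-three element below $w$ is a Coxeter element of its parabolic closure, which has type $A_3$, $A_2\times A_1$, or $A_1^3$. Hence there is no non-lattice $[1,g]$ of rank three to start from. Choosing $b,c$ with different images in $W$ is also impossible: $M_w\to W$ is a homomorphism into a group, so $ab=ac$ in $M_w$ forces $b=c$ in $W$. The same objection defeats your step two, where you propose to map to $W$ to see that $b\neq c$ survives in $M_w$.

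What the proof actually needs is the following.
\begin{enumerate}[label=(\roman*)]
\item An explicit pair of words whose common product lies outside $[1,w]$. In \Cref{prop:D5-counterexample} these are $P=abcade$ and $Q=fabcad$ in $M_{w_{3,2}}$ (with $a,\dots,f$ as there), each followed by the reflection $((1,-4))$. These are words of length $7>5$, connected by $23$ rank-two moves.
\item A proof that $P\neq Q$ carried out inside the monoid, by computing the full class of $P$ under the relations of \Cref{lem:rank-two-presentation}. Only $ab=ba$ and $ad=da$ apply, because the products of the other adjacent pairs $bc,ca,ac,cd,de,ae$ are not below $w_{3,2}$. So the class has exactly four words, none beginning with $f$. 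This is the word-class invariant you mention only as a last resort, and it has to be run on a concrete witness.
\item For the passage to $D_n$ and $E_n$, injectivity of $M_x\to M_w$ for $x\le w$. This comes from $[1,w]\cap W_x=[1,x]$ (\Cref{lem:mov-embedding}): a rank-two move of $M_w$ applied to a word in reflections below $x$ replaces $st$ by another factorization of $st\in W_x\cap[1,w]=[1,x]$. Hence it is already a move of $M_x$, and the word classes agree.
\end{enumerate}
Your use of an anti-automorphism for the other side, and of a computer search for a $(D_5,w_{3,2})$-type element $x\le w$ in types $E_6,E_7,E_8$, matches the paper.
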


The proof is based on a single counterexample.
For a proper quasi-Coxeter element $w$ of $D_5$, we find words $P$ and $Q$ of length six and a reflection $r$ such that $Pr = Qr$ in $M_w$, although $P \neq Q$ in $M_w$ (\Cref{prop:D5-counterexample}).
The two words were found by a computer search, but both properties can be checked by hand.
Since the interval monoid $M_x$ of any element $x \leq w$ embeds in $M_w$ (\Cref{prop:parabolic-submonoid}), the same failure of cancellativity occurs whenever $w$ lies above a proper quasi-Coxeter element of a parabolic subgroup of type $D_5$.
This is the case for every $w$ as in \Cref{thm:main-noncancellative}; in the exceptional types, we find such elements below $w$ by computer.
In type $D_4$, on the other hand, we have found that $M_w$ is cancellative and embeds in $G_w$, even if the interval below a proper quasi-Coxeter element of $D_4$ is not a lattice; we will present the proof in a forthcoming paper.
The question remains open for the proper quasi-Coxeter elements of types $F_4$ and $H_4$ whose interval $[1,w]$ is not a lattice (see \Cref{tab:exceptional-summary}).

The paper is organized as follows.
In \Cref{sec:preliminaries}, we recall the basic definitions and relevant prior results on Coxeter groups and posets.
In \Cref{sec:shellability-type-D}, we prove EL-shellability for intervals below quasi-Coxeter elements in the infinite families $A_n$, $B_n$, and $D_n$, and in \Cref{sec:zeta.polynomial}, we compute the zeta polynomial in type $D$.
\Cref{sec:exceptional-cases} presents the same results for the exceptional types.
Finally, in \Cref{sec:interval-monoid}, we prove \Cref{thm:main-noncancellative} on the interval monoids.

\section{Preliminaries}
\label{sec:preliminaries}

\subsection{Coxeter groups}
We recall some standard terminology on Coxeter groups; see \cite{Humphreys_1990,bjorner2005combinatorics} for a detailed exposition. A \emph{Coxeter system} is a pair \((W,S)\), where \(S\) is a finite set and
\[
W=\left\langle S \ \middle|\ s^{2}=1 \text{ for } s\in S,\ 
(st)^{m(s,t)}=1 \text{ for } s\neq t \right\rangle,
\]
with \(m(s,t)=m(t,s)\in \{2,3,4,\ldots\}\cup\{\infty\}\). The group \(W\) is called a
\emph{Coxeter group}, and the elements of \(S\) are its \emph{simple reflections}.
The \emph{rank} of \((W,S)\) is \(|S|\). 

We denote the set of all reflections by
\[
\refl=\{w s w^{-1}\mid w\in W,\ s\in S\}.
\]
For \(I\subseteq S\), the subgroup \(W_I=\langle I\rangle\) is called a
\emph{standard parabolic subgroup}; a \emph{parabolic subgroup} of \(W\) is any
conjugate of such a subgroup. A \emph{Coxeter element} of \(W\) is a conjugate of the product of
the elements of \(S\), each appearing exactly once, in some order. When \(W\) is
finite, all Coxeter elements of \(W\) are conjugate. More generally,
a Coxeter element of a parabolic subgroup is called a \emph{parabolic Coxeter
element}. If \(w\in W\), its \emph{reflection length} is
\[
\length(w)=\min\{k\mid w=t_1\cdots t_k,\ t_i\in \refl\}.
\]
A factorization \(w=t_1\cdots t_k\) with \(t_i\in\refl\) and
\(k=\ell_\refl(w)\) is called a \emph{reduced reflection factorization}. An element \(w\in W\) is called a \emph{quasi-Coxeter element} if it admits a
reduced reflection factorization
\[
w=t_1\cdots t_n, \qquad t_i\in\refl,\quad n=|S|,
\]
such that the reflections \(t_1,\ldots,t_n\) generate \(W\). Similarly, \(w\) is
a \emph{parabolic quasi-Coxeter element} if it admits a reduced reflection
factorization whose factors generate a parabolic subgroup of \(W\).

\begin{remark}\label{rem:quasi-coxeter-not-conjugate}
Unlike Coxeter elements, quasi-Coxeter elements of a finite Coxeter group do
not, in general, form a single conjugacy class. Following Carter's
classification \cite{carter1972conjugacy}, the conjugacy classes of proper
quasi-Coxeter elements are described by Carter diagrams.
\end{remark}

\subsection{Posets}
We recall some elementary terminology on partially ordered sets; see \cite{Stanley_2011} for more details. A \emph{poset}
is a set \(P\) together with a partial order \(\leq\). If \(p<q\) and there is
no element \(r\in P\) such that \(p<r<q\), then we say that \(q\) \emph{covers}
\(p\), and write \(p\lessdot q\). The \emph{Hasse diagram} of \(P\) is the graph
whose vertices are the elements of \(P\), with an edge \(p\to q\) whenever
\(p\lessdot q\). For \(p\leq q\), the \emph{interval} between \(p\) and \(q\) is $[p,q]=\{r\in P\mid p\leq r\leq q\}.$
A poset is \emph{bounded} if it has a least element and a greatest
element. A \emph{chain} is a totally ordered subset
\(p_0<p_1<\cdots <p_k\), and its length is \(k\). The poset \(P\) is
\emph{graded} if it admits a strictly order-preserving function
$\operatorname{rk}\colon P\to\mathbb Z$, called a \emph{rank function}, such that
$\operatorname{rk}(q)=\operatorname{rk}(p)+1$ whenever $p\lessdot q$.
Every maximal chain in an interval $[p,q]$ then has length
$\operatorname{rk}(q)-\operatorname{rk}(p)$. Finally, \(P\) is a \emph{lattice} if every pair of
elements \(p,q\in P\) has a greatest common lower bound, called their
\emph{meet}, and a least common upper bound, called their \emph{join}.

\subsection{Intervals in groups, interval groups, and interval monoids}
\label{subsec:interval-groups}
Let \(G\) be a group generated by a symmetric set \(R=R^{-1}\). We denote by
\(\ell_R(g)\) the word length of \(g\) with respect to \(R\). The associated
\emph{distance order} on \(G\) is defined by
\[
x\leq y
\quad\Longleftrightarrow\quad
\ell_R(x)+\ell_R(x^{-1}y)=\ell_R(y).
\]
Equivalently, \(x\leq y\) if some shortest \(R\)-factorization of \(y\) begins
with a shortest \(R\)-factorization of \(x\). For \(g\in G\), we write $[1,g]=\{x\in G\mid x\leq g\}$ for the interval from the identity to \(g\). We also write $\operatorname{rank}(x) = \ell_R(x)$, which is the rank of the poset $[1,x]$.

We label every cover relation $x\lessdot y$ in $[1,g]$ by $x^{-1}y\in R$.
The \emph{interval group} associated with \(g\), denoted \(G_g\), is defined from
this labeled Hasse diagram. Let
\[
R_g=\{r\in R\mid \text{the label } r \text{ occurs on an edge of } [1,g]\}.
\]
Then \(G_g\) is the group generated by \(R_g\), with one relation for every pair
of directed paths in the Hasse diagram of \([1,g]\) having the same endpoints:
if the two paths have labels \(r_1,\ldots,r_k\) and \(s_1,\ldots,s_m\), then we
impose $r_1\cdots r_k=s_1\cdots s_m.$ Thus the presentation of \(G_g\) records exactly the relations visible inside
the interval \([1,g]\).
The \emph{interval monoid} \(M_g\) is the monoid defined by the same presentation.
The identity map of \(R_g\) induces a natural homomorphism \(M_g \to G_g\), which need not be injective.

In the case of a Coxeter group \(W\), we will take \(R=\refl\), the
set of reflections. The corresponding order is called the \emph{absolute order}. For \(w\in W\), the interval \([1,w]\) is the main object studied
below.
If \(w\) is a quasi-Coxeter element, then every reflection lies below \(w\), so that \(G_w\) and \(M_w\) are generated by \(\refl\).
Moreover, the Hurwitz action is then transitive on the reduced reflection factorizations of every element of \([1,w]\) \cite[Corollary~6.11 and Theorem~1.1]{baumeister2015hurwitzactionfinitecoxeter}, so that relations of length two suffice.

\begin{lemma}\label{lem:rank-two-presentation}
    Let \(w\) be a quasi-Coxeter element of a finite Coxeter group \(W\).
    Then the monoid \(M_w\) is presented by the generating set \(\refl\) and the relations \(ab = a'b'\), for all reflections \(a,b,a',b'\) such that \(ab = a'b'\) is an element of rank two below \(w\).
    Viewed as a group presentation, the same presentation defines \(G_w\).
\end{lemma}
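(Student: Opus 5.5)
We compare the original presentation of $M_w$ with the presentation $\langle \refl \mid ab = a'b' \rangle$ of the statement. Every rank-two relation is among the original relations, since the two length-two paths from $1$ to $x=ab=a'b'$ have the same endpoints. It therefore suffices to show the converse: every original relation $r_1\cdots r_k = s_1\cdots s_k$ follows from rank-two relations. Here both sides label directed paths from $u$ to $v$ in $[1,w]$. Both paths have length $k=\operatorname{rank}(v)-\operatorname{rank}(u)$ because the poset is graded. Recall also that every reflection lies below $w$, so the generating set is all of $\refl$.

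The first step reduces to paths starting at $1$. Pick any reduced factorization $c_1\cdots c_j$ of $u$; it labels a path from $1$ to $u$, because every element below $u$ lies in $[1,w]$. Prepending $c_1\cdots c_j$ to both sides gives two reduced factorizations $c_1\cdots c_j r_1\cdots r_k$ and $c_1\cdots c_j s_1\cdots s_k$ of $v$. For now, assume we know that any two reduced reflection factorizations of $v\le w$ are equivalent under rank-two relations. Then $c_1\cdots c_j r_1\cdots r_k = c_1\cdots c_j s_1\cdots s_k$ holds in the rank-two monoid. However, we cannot cancel the prefix in a monoid, and indeed cancellativity fails by \Cref{thm:main-noncancellative}. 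So instead we apply the same fact directly to $u^{-1}v$: the element $u^{-1}v$ has reduced factorizations $r_1\cdots r_k$ and $s_1\cdots s_k$.

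The difficulty is that the equivalence for $u^{-1}v$ uses rank-two elements below $u^{-1}v$, not elements of the form $[u,\cdot]$. We resolve this by checking that the intervals are compatible: left multiplication by $u^{-1}$ gives an isomorphism $[u,v]\cong[1,u^{-1}v]$ that preserves edge labels. This holds because $x\mapsto u^{-1}x$ preserves the quantities $\ell_\refl(x^{-1}y)$. Moreover, $u^{-1}v\le w$: since $u\le v\le w$, we have $u^{-1}v \le u^{-1}w$, and $u^{-1}w$ lies below $w$ by the standard fact that $x\le w$ implies $x^{-1}w\le w$. Consequently each rank-two element $ab$ below $u^{-1}v$ lies below $w$, so each such relation is an allowed relation of the statement.

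The core step, and the main obstacle, is the equivalence itself. By \cite[Corollary~6.11 and Theorem~1.1]{baumeister2015hurwitzactionfinitecoxeter}, for a quasi-Coxeter $w$ the Hurwitz action is transitive on reduced reflection factorizations of every $x\le w$. A Hurwitz move replaces $t_it_{i+1}$ by $t_{i+1}(t_{i+1}t_it_{i+1})$. This is a rank-two relation, because $t_it_{i+1}$ has reflection length two inside a reduced factorization and lies below $x\le w$, being a subword product in the interval sense. To see the last claim, note that $t_1\cdots t_{i-1}\le t_1\cdots t_{i+1}$ and use the interval isomorphism above. A chain of Hurwitz moves connecting the two factorizations therefore gives a chain of rank-two relations, which proves the monoid statement. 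The group statement follows because group presentations with identical generators and relations define the same group, and the original group relations coincide with the monoid ones.
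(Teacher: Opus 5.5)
Your proof is correct and follows the paper's approach: the paper justifies the lemma by the Hurwitz transitivity of \cite[Corollary~6.11 and Theorem~1.1]{baumeister2015hurwitzactionfinitecoxeter} on the reduced factorizations of every element below $w$, and cites \cite[Proposition~2.15]{baumeister2023interval-part2} for the group, while you spell out the reduction from a path in $[u,v]$ to reduced factorizations of $u^{-1}v\le w$ and check that each Hurwitz move is a rank-two relation below $w$. Two cosmetic points: the detour through prefixes and the appeal to \Cref{thm:main-noncancellative} can be dropped (that theorem is itself proved using this lemma, although you never use it logically), and for the group you should say that the two relation sets are mutually derivable rather than identical, a fact that transfers from the free monoid to the free group.
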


For the group \(G_w\), this is \cite[Proposition~2.15]{baumeister2023interval-part2}, and the monoid \(M_w\) is the one considered in \cite[Section~4, Question~(d)]{baumeister2023interval-part2}.

\begin{theorem}[{\cite{brady2005latticesfiniterealreflection}}]\label{thm:coxeter-interval-lattice}
Let \(W\) be a finite Coxeter group and let \(w\in W\) be a Coxeter element.
Then the interval $[1,w]$ in the absolute order is a lattice.
\end{theorem}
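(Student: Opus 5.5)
Since $[1,w]$ is finite and bounded, with minimum $1$ and maximum $w$, it suffices to show that any two elements $x,y\in[1,w]$ have a meet. Joins then exist formally: the join of $x$ and $y$ is the meet of all common upper bounds, a set that is nonempty because it contains $w$. Alternatively, one can use the anti-automorphism $x\mapsto x^{-1}w$ (the Kreweras complement), which reverses the order on $[1,w]$ and so turns meets into joins.

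The main tool is the geometric description of the absolute order through moved spaces. Let $V$ be the reflection representation and set $\operatorname{Mov}(x)=\operatorname{im}(x-1)$. By Carter's lemma, $\length(x)=\dim\operatorname{Mov}(x)$. Moreover, $x\le y$ implies $\operatorname{Mov}(x)\subseteq\operatorname{Mov}(y)$, and conversely $x\le y$ holds if and only if $\operatorname{Mov}(x)\oplus\operatorname{Mov}(x^{-1}y)=\operatorname{Mov}(y)$. The key structural input, due to Brady and Watt, is that for a Coxeter element $w$ the map $x\mapsto\operatorname{Mov}(x)$ is injective on $[1,w]$. Each $x\le w$ is then a parabolic Coxeter element of the parabolic subgroup $W_x$ fixing $\operatorname{Mov}(x)^{\perp}$ pointwise, and $[1,x]$ is the noncrossing partition poset of $W_x$. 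I would first establish these facts, using induction on rank together with the Hurwitz transitivity already quoted in the text. With them in hand, $[1,w]$ embeds as a subposet of the lattice of subspaces of $V$.

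For the meet, I take $U=\operatorname{Mov}(x)\cap\operatorname{Mov}(y)$. Since $W_x\cap W_y$ is a parabolic subgroup, it has the form $W_U$, and every common lower bound $z$ of $x$ and $y$ satisfies $\operatorname{Mov}(z)\subseteq U$, so $z\in W_U$. The goal is to produce a unique maximal element among those $z\le w$ with $\operatorname{Mov}(z)\subseteq U$. Restricting to the interval $[1,x]$, which is itself a noncrossing partition poset of a smaller rank Coxeter group, I would proceed by induction on rank. By induction the elements of $[1,x]$ lying in $W_U$ form a lattice, and I want them to have a top element $z_0$. One then checks that $z_0$ does not depend on whether it is computed inside $[1,x]$ or inside $[1,y]$, again by injectivity of $\operatorname{Mov}$.

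The main obstacle is exactly this step: showing that the elements of $[1,w]$ contained in a given parabolic subgroup $W_U$ have a greatest element. Equivalently, one must show that the intersection of $[1,w]$ with $W_U$ is an interval $[1,z_0]$ with $z_0$ a parabolic Coxeter element of some parabolic subgroup contained in $W_U$; in general $z_0$ does not move all of $U$. Uniformly, I would attack this via Brady and Watt's positivity argument: order the roots using the $w$-invariant half-space structure, so that $W_U\cap[1,w]$ is controlled by a noncrossing-type convexity condition, and pick $z_0$ as the product of the reflections in $W_U$ that are ``$w$-compatible'' in this sense. If the uniform argument resists, I would fall back on the classification. For type $A$, $[1,w]$ is the classical noncrossing partition lattice. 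Types $B$ and $D$ are handled by the explicit noncrossing models of Reiner and Athanasiadis--Reiner, where the meet is the intersection of blocks, adjusted in type $D$ for the central block. The exceptional and dihedral types are checked by computer.
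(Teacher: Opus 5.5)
The paper does not prove this statement; it quotes it from Brady--Watt, whose proof is case-free. Your preliminary reductions are correct. A finite bounded meet-semilattice is a lattice, and by Lemma~\ref{lem:mov-embedding} the common lower bounds of $x$ and $y$ are exactly the elements of $[1,w]\cap W_U$, where $U=\operatorname{Mov}(x)\cap\operatorname{Mov}(y)$. But this only restates the lattice property, and the step you flag as the main obstacle is left unproved.

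The induction you propose there is circular. The set $[1,x]\cap W_U$ is an order ideal of the lattice $[1,x]$, and a finite order ideal of a lattice is a lattice exactly when it has a greatest element, which is the claim to be shown. Knowing that $[1,x]$ is a lattice gives no control over how it meets $[1,y]$. Your ``positivity argument'' does not fill this in. As stated it cannot be set up: a Coxeter element fixes no nonzero vector, so it preserves no half-space. You also do not say in which order the ``compatible'' reflections of $W_U$ are multiplied, why their product lies below $w$, or why it dominates every element of $[1,w]\cap W_U$.

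In fact, no argument built only from the ingredients you list can work. Take the proper quasi-Coxeter element $w_{2,2}$ of $D_4$:
\begin{enumerate}[label=(\roman*)]
\item The map $\operatorname{Mov}$ is still injective on $[1,w_{2,2}]$; the Brady--Watt injectivity holds below every element, not only below Coxeter elements.
\item Lemma~\ref{lem:mov-embedding} still holds.
\item Every $x<w_{2,2}$ is a parabolic Coxeter element of a parabolic subgroup of rank at most three, hence of type $A$, so every proper lower interval is a noncrossing partition lattice.
\end{enumerate}
Nevertheless $[1,w_{2,2}]$ is not a lattice (Remark~\ref{rem:quasi-coxeter-intervals-not-always-lattices}), and the failure must occur for a pair $x,y<w_{2,2}$, where all your inductive hypotheses hold. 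So the missing step has to use that $w$ itself is a Coxeter element of the whole group.

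Brady and Watt do this through a genuinely new construction. From a bipartite factorization of $w$ they build a simplicial complex on a suitable set of roots, and they realize $[1,w]$ as an isomorphic lattice of spherical subcomplexes. Nothing in your sketch replaces this.

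Your fallback, if carried out, would reproduce the older case-by-case proof rather than supply one. It is essentially a list of citations:
\begin{enumerate}[label=(\roman*)]
\item Kreweras in type $A$;
\item Reiner's model, together with its identification with $[1,w]$, in type $B$;
\item Athanasiadis--Reiner in type $D$;
\item a computer check in the exceptional types.
\end{enumerate}
The type $D$ assertion, that the ``adjusted'' intersection of blocks is again noncrossing and is the greatest lower bound, is stated rather than verified. The dihedral case needs no computer, since every bounded poset of rank two is a lattice.
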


The intervals \([1,w]\) are the Coxeter-theoretic noncrossing
partition lattices; see \cite{armstrong2007generalizednoncrossingpartitionscombinatorics} for an overview.
Whenever \([1,w]\) is a lattice, the monoid \(M_w\) is a Garside monoid with Garside element \(w\) \cite[Theorem~0.5.2]{Bessis.monoid} (see also \cite[Chapter~VI]{DDGKM}); in particular, \(M_w\) is cancellative and embeds in \(G_w\).
For a Coxeter element \(w\), this gives the dual Artin monoid of \(W\), and \(G_w\) is isomorphic to the Artin group of \(W\) \cite{Bessis.monoid}.

\begin{remark}\label{rem:quasi-coxeter-intervals-not-always-lattices}
The previous theorem does not extend to all quasi-Coxeter elements.
For example, if \(w\) is a proper quasi-Coxeter element
in a Coxeter group of type \(D_n\), with \(n\geq 4\), the interval $\inter$ is not a lattice. This result is proven in \cite[Section 5]{NICA.annular.noncrossing} for annular noncrossing partition posets. Thus intervals below quasi-Coxeter elements form a natural
generalization of noncrossing partition intervals, but the lattice property
does not necessarily hold in this setting.
\end{remark}

Without the lattice property, the monoid \(M_w\) can fail to be cancellative (\Cref{thm:main-noncancellative}).
Finally, we recall how intervals behave with respect to parabolic subgroups.
The \emph{parabolic closure} \(W_x\) of an element \(x\in W\) is the smallest parabolic subgroup of \(W\) that contains \(x\).
The following lemma is a consequence of a theorem of Brady and Watt \cite[Theorem~1]{BradyWatt2002orthogonal}; for quasi-Coxeter elements, it is \cite[Proposition~2.8]{baumeister2023interval-part2}.

\begin{lemma}\label{lem:mov-embedding}
    Let \(w\in W\) and \(x\le w\).
    Then \([1,w]\cap W_x=[1,x]\).
\end{lemma}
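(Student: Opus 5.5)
We need to prove Lemma \ref{lem:mov-embedding}: for $x \le w$, $[1,w]\cap W_x = [1,x]$.

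The inclusion $[1,x]\subseteq [1,w]\cap W_x$ is the easy half. If $y\le x$, then $y\le w$ by transitivity of the absolute order. To see that $y\in W_x$, take a reduced factorization of $x$ that begins with a reduced factorization of $y$. All its reflections lie in $W_x$, because of the following standard fact (Carter's lemma together with Brady--Watt): every reflection occurring in a reduced reflection factorization of $x$ lies in the parabolic closure $W_x$. Concretely, one works in the geometric representation $V$ and sets $\operatorname{Mov}(x)=\operatorname{im}(x-1)$. For a reduced factorization $x=t_1\cdots t_k$, the roots of the $t_i$ form a basis of $\operatorname{Mov}(x)$. Moreover, $W_x$ is the pointwise stabilizer of $\operatorname{Mov}(x)^\perp=\operatorname{Fix}(x)$, which is parabolic by Steinberg's theorem. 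Hence each $t_i$ fixes $\operatorname{Fix}(x)$, so $t_i\in W_x$, and therefore $y\in W_x$.

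The reverse inclusion is the substantive half. Let $y\le w$ with $y\in W_x$. The goal is to show $y\le x$. The plan is to use the Brady--Watt characterization: for $u\le w$, we have $u\le v$ (with $v\le w$) if and only if $\operatorname{Mov}(u)\subseteq \operatorname{Mov}(v)$. More precisely, \cite[Theorem~1]{BradyWatt2002orthogonal} says that the map $u\mapsto \operatorname{Mov}(u)$ is an order embedding of $[1,w]$ into the subspace lattice, and that $u\le w$ is equivalent to $\operatorname{Mov}(u)\oplus\operatorname{Mov}(u^{-1}w)=\operatorname{Mov}(w)$.

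Since $y\in W_x$ and $W_x$ fixes $\operatorname{Fix}(x)$ pointwise, we get $\operatorname{Fix}(y)\supseteq\operatorname{Fix}(x)$. Taking orthogonal complements gives $\operatorname{Mov}(y)\subseteq\operatorname{Mov}(x)$. Both $y$ and $x$ lie in $[1,w]$, so the embedding statement yields $y\le x$.

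The main obstacle is making sure the cited theorem really gives "$\operatorname{Mov}$-containment implies order" for elements that are both below $w$, rather than only the converse. If \cite{BradyWatt2002orthogonal} only provides the direct-sum criterion, I would argue directly. Write $z=x^{-1}w$ and $v=y^{-1}w$, so that $\operatorname{Mov}(w)=\operatorname{Mov}(x)\oplus\operatorname{Mov}(z)=\operatorname{Mov}(y)\oplus\operatorname{Mov}(v)$. Then I would show that $y^{-1}x$ has $\operatorname{Mov}$ equal to $\operatorname{Mov}(x)\cap\operatorname{Mov}(v)$, of dimension $\ell(x)-\ell(y)$, using that $x$ and $y$ both act as the identity modulo the appropriate summands. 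The reflection length of any element equals $\dim\operatorname{Mov}$, so this gives $\ell(y)+\ell(y^{-1}x)=\ell(x)$, that is, $y\le x$. The delicate linear-algebra step here is the identification of $\operatorname{Mov}(y^{-1}x)$; it is exactly the content of Brady--Watt's orthogonality argument, and I would reproduce it by working with the bilinear form $\langle\cdot,\cdot\rangle$ on the relevant $\operatorname{Mov}$ spaces.
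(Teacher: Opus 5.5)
Your proposal is correct and follows the route the paper indicates: $W_x$ is the pointwise stabilizer of $\operatorname{Fix}(x)$, so $y\in W_x$ gives $\operatorname{Mov}(y)\subseteq\operatorname{Mov}(x)$, and Brady--Watt's Theorem~1 turns this containment into $y\le x$. That theorem states that $B\mapsto\operatorname{Mov}(B)$ is a poset isomorphism from $[I,A]$ in $O(V)$ onto the subspaces of $\operatorname{Mov}(A)$, so the needed direction is available and your fallback argument is unnecessary; the one step you should state explicitly is that Carter's lemma ($\ell_{\mathcal R}=\dim\operatorname{Mov}$ on $W$) identifies the absolute order on $W$ with the restriction of the order on $O(V)$.
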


Hence every defining relation of \(M_w\) that applies to a word in the reflections below \(x\) is a defining relation of \(M_x\), which gives the following.

\begin{proposition}\label{prop:parabolic-submonoid}
    Let \(w\in W\) and \(x\le w\).
    Then the inclusion of generating sets induces an injective homomorphism \(M_x\to M_w\).
    Consequently, if \(M_x\) is not left (respectively, right) cancellative, then neither is \(M_w\).
\end{proposition}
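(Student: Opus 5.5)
We need to prove Proposition \ref{prop:parabolic-submonoid}: the homomorphism $M_x \to M_w$ induced by inclusion is injective, and non-cancellativity transfers.

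We plan to build a retraction $M_w \to M_x$, or more precisely a partial map on words. First note that $\refl_x \subseteq \refl_w$, and every relation of $M_x$ holds in $M_w$. A relation of $M_x$ comes from two directed paths in the Hasse diagram of $[1,x]$ with the same endpoints. Since $[1,x]\subseteq[1,w]$ as an interval of the absolute order (if $y\le z\le x$ then $z\le w$), both paths also live in $[1,w]$. So the map $M_x\to M_w$ is a well-defined homomorphism.

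For injectivity, we show that $M_x$ is the union of a family of congruence classes of $M_w$, in the following sense. Let $u,v$ be words in $\refl_x$ that are equal in $M_w$. Then there is a finite sequence of elementary moves $u=u_0,u_1,\dots,u_k=v$, each replacing a factor $r_1\cdots r_j$ by $s_1\cdots s_m$, where both factors label paths between the same endpoints $y\le z$ of $[1,w]$.

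We prove by induction along the sequence that every $u_i$ is a word in $\refl_x$ and every move is a relation of $M_x$. Suppose $u_i$ is a word in $\refl_x$ and a move replaces a factor $r_1\cdots r_j$ of $u_i$. The letters $r_1,\dots,r_j$ are reflections lying below $x$, hence they belong to $W_x$. Therefore the endpoints of the path satisfy $y^{-1}z=r_1\cdots r_j\in W_x$. The other factor $s_1\cdots s_m$ is a path in $[1,w]$ from $y$ to $z$. Translating by $y^{-1}$, the element $y^{-1}z$ lies below $y^{-1}w$; this uses that intervals $[y,w]$ are isomorphic to $[1,y^{-1}w]$ via left multiplication by $y^{-1}$, which is a standard consequence of the definition of the distance order.

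We then apply Lemma \ref{lem:mov-embedding} to $y^{-1}w$ and its element $y^{-1}z\in W_x$. The partial products $y^{-1}(\text{prefix})$ of the $s$-path lie in $[1,y^{-1}z]$, so they lie in $W_{y^{-1}z}\subseteq W_x$. Hence the letters $s_i$ are reflections of $W_x$. Each $s_i$ is a rank-one element of $[1,w]\cap W_x=[1,x]$, so $s_i\in\refl_x$.

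It remains to check that the move is a relation of $M_x$, i.e. that both paths, once translated, lie in $[1,x]$. The main obstacle is exactly here, because the move happens at an arbitrary position in $[1,w]$, not necessarily inside $[1,x]$. To handle it, we aim to reduce to relations based at $1$. By Lemma \ref{lem:rank-two-presentation} we may use only relations $ab=a'b'$, with $ab$ of rank two below $w$. For a quasi-Coxeter $w$ these suffice; in general, relations between paths from $1$ generate all relations, since a path from $y$ to $z$ extended by a fixed path from $1$ to $y$ can be cancelled on the left only in the group. So for general $w$ we would instead argue with relations coming from paths starting at $1$.

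With rank-two relations $ab=a'b'$ and $a,b\in\refl_x$: the element $ab\in W_x$ is below $w$, so by Lemma \ref{lem:mov-embedding} it lies in $[1,x]$. The factorization $a'b'$ is reduced, so $a'\le ab$ and $a',b'\in W_{ab}\subseteq W_x$. Hence the relation is one of $M_x$, which gives injectivity.

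For the final claim, suppose $ur=vr$ in $M_x$ with $u\neq v$ in $M_x$. Then the same equality holds in $M_w$, and $u\neq v$ in $M_w$ by injectivity. The left-cancellative case is symmetric.
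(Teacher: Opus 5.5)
Your strategy is the paper's: show that every defining relation of $M_w$ that applies to a word in $\refl_x$ is already a defining relation of $M_x$, using \Cref{lem:mov-embedding}. Your final argument with the rank-two relations of \Cref{lem:rank-two-presentation} is correct when $w$ is quasi-Coxeter. The gap is the general case, which the proposition covers ($w\in W$ arbitrary). There \Cref{lem:rank-two-presentation} is not available, since it rests on Hurwitz transitivity for quasi-Coxeter elements. Your reason why relations based at $1$ generate all relations does not work in the monoid. Prefixing a fixed path $p$ from $1$ to $y$ only yields $p\,r_1\cdots r_j=p\,s_1\cdots s_m$, and recovering $r_1\cdots r_j=s_1\cdots s_m$ from it would require left cancellation in $M_w$, which is exactly what may fail. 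So for general $w$ the key step is announced but not proved.

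The step you flag as the main obstacle disappears once you note that a defining relation is just a pair of words, not tied to a position in $[1,w]$, and translation already moves it to $1$. Suppose two paths from $y$ to $z$ in $[1,w]$ carry the labels $r_1\cdots r_j$ and $s_1\cdots s_m$. Left multiplication by $y^{-1}$, which is the label-preserving isomorphism $[y,z]\cong[1,u]$ with $u=y^{-1}z$, turns them into two paths from $1$ to $u$ with the same labels. Moreover $u\le z\le w$, because $z=u\cdot(u^{-1}yu)$ and $\ell_{\refl}(u^{-1}yu)=\ell_{\refl}(y)$. You only derived $u\le y^{-1}w$, which is why you could not apply \Cref{lem:mov-embedding} to $w$ itself. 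Thus every defining relation of $M_w$ equates two reduced factorizations of some $u\in[1,w]$. If one side is a word in $\refl_x$, then $u\in W_x$, hence $u\in[1,w]\cap W_x=[1,x]$. Both words then label paths from $1$ to $u$ inside $[1,x]$, so the relation is one of $M_x$. In particular every $s_i\le u\le x$ lies in $\refl_x$, which makes your separate argument for that redundant. With this observation your induction along the sequence of moves works for arbitrary $w$, and the transfer of non-cancellativity follows as you wrote.
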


\subsection{Signed permutations and quasi-Coxeter elements}
In this paper, we study intervals below quasi-Coxeter elements in type $D$ and in the exceptional types. Among finite irreducible Coxeter groups of rank at least three, proper quasi-Coxeter elements occur precisely in types $D_n$ ($n\ge4$), $E_6$, $E_7$, $E_8$, $F_4$, $H_3$, and $H_4$; see \cite{baumeister2015hurwitzactionfinitecoxeter,baumeister2023interval-part2}. In type $I_2(m)$ with simple generators $s,t$, the quasi-Coxeter elements are the rotations $(st)^k$ with $\gcd(k,m)=1$; such an element is proper exactly when $k\not\equiv\pm1\pmod m$.

We introduce notation to describe the Coxeter groups of type $B$ and $D$ as subgroups of the symmetric group on the set
$[\pm n]\coloneqq \{1, \dots, n,-1, \dots, -n\}$.
For $i_1,\ldots,i_k\in[\pm n]$ with pairwise distinct absolute values, define:
\begin{equation*}
\begin{split}
    ((i_{1}, i_{2}, \dots , i_{k})) &\coloneqq 
(i_{1}, i_{2}, \dots, i_{k})(-i_{1},-i_{2}, \dots ,-i_{k}); \\ 
[i_{1}, i_{2}, \dots , i_{k}] &\coloneqq
(i_{1}, i_{2}, \dots , i_{k},-i_{1},-i_{2}, \dots ,-i_{k}).
\end{split}
\end{equation*}
Throughout the paper, we compose permutations from right to left, so that $(uv)(a) = u(v(a))$; for example, $(a,b)(b,c) = (a,b,c)$.

Using the previous notation, the set of reflections of $B_{n}$ is
\begin{align*}
  \reflectionsb \coloneqq
\{((i, j)) \mid 1 \le i < j \le n\}
\;&\cup\;
\{((i,-j)) \mid 1 \le i < j \le n\} \\
\;\cup\; 
\{(i,-i) \mid &1 \le i \le n\};  
\end{align*}
and the set of reflections of $D_{n}$ is
\[\reflectionsd \coloneqq
\{((i, j)) \mid 1 \le i < j \le n\}
\;\cup\;
\{((i,-j)) \mid 1 \le i < j \le n\}.
\]

Let $p$ and $q$ be two positive integers and let $n=p+q$. The elements
$w_{p,q} \coloneqq [1, 2, \dots , p][p+1, \dots ,n] \in D_{n}$
are quasi-Coxeter. As $p$ and $q$ vary, they represent all the possible conjugacy classes of quasi-Coxeter elements in $D_{n}$. These classes are characterized by Carter diagrams in \cite{carter1972conjugacy}.

\subsection{Annular noncrossing partitions}
\label{sec:annular}
Since we aim to study intervals in finite Coxeter groups of type $D$ under quasi-Coxeter elements, we review the combinatorial description of posets of this kind in terms of annular noncrossing partitions, as described by Nica and Oancea \cite{NICA.annular.noncrossing}.
We first describe the annular noncrossing partitions of type $B$, and then of type $D$.

Let $p+q=n$ as before. A partition $\pi$ of $[\pm n]$ is invariant under sign change if $-A\coloneqq\{-a\mid a\in A\}$ is a block of $\pi$ whenever $A$ is a block of $\pi$.
For such a partition, a block $A$ satisfying $A=-A$ is called an inversion-invariant block, or a zero-block. All other blocks are called nonzero blocks and occur in distinct pairs $A,-A$, with $A\cap(-A)=\emptyset$.
We define the set $\annularncB(p,q)$ of annular noncrossing partitions of type $B$ to consist of the partitions of $[\pm n]$ that are invariant under sign change, have at most one zero-block, and can be drawn without crossing inside an annulus which has the points $1, \dots, p,-1,\dots,-p$ marked clockwise on its outer circle, and has the points $p+1, \dots, n, -(p+1), \dots, -n$ marked counterclockwise on its inner circle.

\begin{figure}[ht]
\centering
\begin{tikzpicture}[scale=0.8]

\def\R{4}
\def\r{1.35}

\draw[line width=1.1pt] (0,0) circle (\R);
\draw[line width=1.1pt] (0,0) circle (\r);

\coordinate (P1)  at (90:\R);
\coordinate (P2)  at (30:\R);
\coordinate (P3)  at (-30:\R);
\coordinate (Pm1) at (-90:\R);
\coordinate (Pm2) at (-150:\R);
\coordinate (Pm3) at (150:\R);

\coordinate (P4)  at (90:\r);
\coordinate (Pm5) at (0:\r);
\coordinate (Pm4) at (-90:\r);
\coordinate (P5)  at (180:\r);

\fill[gray!30]
(P1)  .. controls (3.6,2.4)  and (3.3,-2.2)  .. (Pm4)
      .. controls (1.5,-1.8) and (1.7,-0.6)  .. (Pm5)
      .. controls (2.1,-0.2) and (2.1,1.8)   .. (P1)
-- cycle;

\fill[gray!30]
(P4)  .. controls (-3.6,2.4) and (-3.3,-2.2) .. (Pm1)
      .. controls (-1.5,-1.8) and (-1.7,-0.6) .. (P5)
      .. controls (-1.7,0.6)  and (-1.5,1.2)  .. (P4)
-- cycle;

\draw[black, semithick] (P1) .. controls (3.6,2.4) and (3.3,-2.2) .. (Pm4);

\draw[black, semithick] (Pm4) .. controls (1.5,-1.8) and (1.7,-0.6) .. (Pm5);

\draw[black, semithick] (P1) .. controls (2.1,1.8) and (2.1,-0.2) .. (Pm5);

\draw[black, semithick] (P4) .. controls (-3.6,2.4) and (-3.3,-2.2) .. (Pm1);

\draw[black, semithick] (Pm1) .. controls (-1.5,-1.8) and (-1.7,-0.6) .. (P5);

\draw[black, semithick] (P4) .. controls (-1.5,1.2) and (-1.7,0.6) .. (P5);

\foreach \x in {P1,P2,P3,Pm1,Pm2,Pm3,P4,Pm5,Pm4,P5}
{
    \fill (\x) circle (3.2pt);
}

\node[yshift=12pt] at (P1) {$1$};
\node[xshift=12pt] at (P2) {$2$};
\node[xshift=12pt] at (P3) {$3$};

\node[yshift=-14pt] at (Pm1) {$-1$};
\node[xshift=-14pt] at (Pm2) {$-2$};
\node[xshift=-14pt] at (Pm3) {$-3$};

\node[yshift=10pt] at (P4) {$4$};
\node[xshift=16pt] at (Pm5) {$-5$};
\node[yshift=-12pt] at (Pm4) {$-4$};
\node[xshift=-14pt] at (P5) {$5$};

\end{tikzpicture}
\caption{The annular noncrossing partition of {$((1,-4,-5))$}.}
\label{fig:annular.example2}
\end{figure}

For example, the partition $\pi'=\{\{1,-4,-5\},\{-1,4,5\},\{2\}, \{3\}, \{-2\}, \{-3\}\}$ is an element of $\annularncB(3,2)$, since it can be drawn without crossing blocks in the corresponding annulus, as shown in Figure \ref{fig:annular.example2}.

We consider the poset structure on $\annularncB(p,q)$ given by the reverse refinement order
(\(
\pi \leq \rho\) if and only if \(
\text{every block of } \rho \text{ is a union of blocks of } \pi
\)).
By \cite[Theorem 1.4]{NICA.annular.noncrossing}, there exists a poset isomorphism between $\annularncB(p,q)$ and the type $B$ interval $\interpq_{B}$.
More precisely, the isomorphism assigns to every element $\tau \in \interpq_{B}$ the partition obtained from the sets of entries in the disjoint cycles of $\tau$ by retaining all nonzero blocks and merging all zero-blocks, if any, into a single zero-block.
For example, the noncrossing partition corresponding to $((1,-4,-5)) \in [1,w_{3,2}]_{B}$ is $\pi'$ (see again Figure \ref{fig:annular.example2}).
Since reflection length in $D_n$ agrees with reflection length in $B_n$ on elements of $D_n$, the interval $\interpq_{D}$ is an induced subposet of $\interpq_{B}$. We can therefore restrict the preceding poset isomorphism to $\interpq_{D}$, and we denote its image by $\annularncD(p,q)$. The elements of $\annularncD(p,q)$ are the annular noncrossing partitions of type $D$. An element of $\annularncB(p,q)$ lies in $\annularncD(p,q)$ if and only if either it has no zero-block, or its zero-block contains elements of both the inner and the outer circle.
Indeed, $D_n$ consists of the elements of $B_n$ that are even as permutations of $[\pm n]$ \cite[Section 7]{NICA.annular.noncrossing}. A pair of cycles $((i_1,\dots,i_r))$ is an even permutation, whereas a cycle $[i_1,\dots,i_r]$ is odd. By \cite[Lemma 4.8]{NICA.annular.noncrossing}, a zero-block meeting both circles comes from exactly two cycles of the latter form, one on each circle, and any other zero-block comes from exactly one.
In the following, we identify the interval $\interpq_{B}$ with $\annularncB(p,q)$, and $\interpq_{D}$ with $\annularncD(p,q)$.

In type $D$, a poset of annular noncrossing partitions is isomorphic to a corresponding interval below the quasi-Coxeter element $w_{\pq}$.
In type $B$, the maximal element does not correspond to a quasi-Coxeter element of $B_{n}$.
However, it is useful to understand the noncrossing structure in both cases since the zeta polynomial is already calculated in type $B$ (see \cite[Section 5]{GouldenNicaEnumerativeProperties}).

\subsection{Shellability}
We say that a simplicial complex is pure if all its maximal simplices have the same dimension. We first recall the definition of shellability for a pure simplicial complex.
\begin{definition}\label{def:shellability}
    A finite pure simplicial complex $X$ is shellable if there exists a total order (called a shelling order) on its maximal simplices
    $\sigma_{1},\sigma_{2},\ldots,\sigma_{m}$ such that
    $\sigma_{i} \cap \left(\bigcup_{j<i} \sigma_{j}\right)$
    is pure of dimension $\dim (\sigma_{i})-1$ for every $2 \leq i \leq m$.
\end{definition}
In the rest of this subsection, we denote by $P$ a finite bounded graded poset. We write $\operatorname{rank}(P)$ for the common length of its maximal chains.
The order complex $\Delta(P)$ is the simplicial complex whose vertices are the elements of $P$ and whose faces are the finite chains of $P$. We say that $P$ is shellable if $\Delta(P)$ is shellable.
Let $\Lambda$ be a totally ordered set of labels. We denote by $\E(P)$ the set of cover relations of $P$. An edge labeling for $P$ is a map $\lambda: \mathcal{E}(P) \to \Lambda$.
For every saturated chain $c:x_0\lessdot x_1\lessdot\cdots\lessdot x_k$ in $P$, we set
\[
\lambda(c)=(\lambda(x_0,x_1),\ldots,\lambda(x_{k-1},x_k)).
\]
The chain $c$ is said to be rising (or falling) with respect to $\lambda$ if the entries of $\lambda(c)$ are strictly increasing (or weakly decreasing, respectively) with respect to the total order of $\Lambda$.
For saturated chains $c$ and $c'$ of the same length in $P$, we say that $c$ is lexicographically smaller than $c'$ (with respect to $\lambda$) if
$\lambda(c)$ precedes $\lambda(c')$ in the lexicographic order induced
by the total order of $\Lambda$.

\begin{definition}\label{def:EL.labeling}
An edge labeling $\lambda: \mathcal{E}(P) \to \Lambda$ is called an \emph{EL-labeling} if for every non-singleton interval $[u,v]$ in $P$:
\begin{enumerate}[label=(\roman*)]
    \item there is a unique rising maximal chain in $[u, v]$;
    \item this chain is lexicographically smallest among all maximal chains in $[u, v]$, with respect to $\lambda$.
\end{enumerate}
\end{definition}

We say that $P$ is EL-shellable if there exists an EL-labeling $\lambda: \E(P) \to \Lambda$ for $P$.
Every EL-shellable poset is shellable, as stated in the next theorem, proved by Bj{\"o}rner \cite[Theorem 2.3]{Bjorner1980ShellableAC}.
\begin{theorem}\label{thm:EL.shellability}
Let $\lambda: \mathcal{E}(P) \longrightarrow \Lambda$ be an EL-labeling for $P$. Then any ordering $c_{1},c_{2},\ldots,c_{m}$ of the maximal chains of $P$ that is lexicographic with respect to their label sequences, with ties broken arbitrarily, induces a shelling order for $\Delta(P)$.
\end{theorem}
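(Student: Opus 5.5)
This is Björner's classical theorem, so I plan to reproduce the standard argument. Fix an EL-labeling $\lambda$ and order the maximal chains $c_1,\dots,c_m$ lexicographically by $\lambda(c_i)$, breaking ties arbitrarily. The goal is to verify the condition in \Cref{def:shellability}. For this it suffices to show the following: for every $i\ge 2$ and every $j<i$, there is some $k<i$ such that $c_i\cap c_j\subseteq c_i\cap c_k$ and $|c_i\cap c_k| = |c_i|-1$. In words, every earlier intersection lies in a codimension-one face of $c_i$ that is already covered. This shows that $c_i\cap\bigcup_{j<i}c_j$ is a union of facets of the simplex $c_i$, hence pure of dimension $\dim c_i - 1$. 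It is nonempty because the intersection always contains $\hat 0$ and $\hat 1$.

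\textbf{Key step.} Fix $j<i$ and write $c_i:\hat0=x_0\lessdot x_1\lessdot\cdots\lessdot x_r=\hat1$. Since $c_j\ne c_i$, the two chains share a common initial segment $x_0,\dots,x_a$ and then diverge. I would first argue that $c_i$ cannot be lexicographically smaller on the first portion where they differ. Concretely, let $x_b$ be the first element after $x_a$ that lies in both chains.

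\begin{itemize}
\item The subchains of $c_i$ and $c_j$ in $[x_a,x_b]$ are distinct maximal chains of that interval, with identical labels before $x_a$.
\item If $\lambda$ of the $c_i$-segment in $[x_a,x_b]$ were lexicographically smaller, it would be the smallest one up to that point. Here I need some care: the global comparison of $c_j$ versus $c_i$ is decided at the first differing label, which occurs inside $[x_a,x_b]$, unless the labels coincide there.
\item The EL property says the unique rising chain of $[x_a,x_b]$ is the lexicographic minimum. If the labels of the two segments coincided, uniqueness of the rising chain together with \Cref{def:EL.labeling} applied to subintervals would still force a difference. To avoid this subtlety I would instead phrase the argument as: the segment of $c_i$ in $[x_a,x_b]$ is not rising.
\end{itemize}

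\textbf{Justifying non-rising.} If that segment were rising, it would be the strict lexicographic minimum of $[x_a,x_b]$. Strictness holds because any other chain with the same label sequence would also be rising, contradicting uniqueness. Then $\lambda(c_i)<\lambda(c_j)$ strictly, contradicting $j<i$, since ties only occur between equal label sequences.

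\textbf{Constructing $c_k$.} Since the segment of $c_i$ in $[x_a,x_b]$ is not rising, it has a descent at some $x_{d-1}\lessdot x_d\lessdot x_{d+1}$ with $a<d<b$, meaning $\lambda(x_{d-1},x_d)\ge\lambda(x_d,x_{d+1})$. Replace $x_{d-1}<x_d<x_{d+1}$ by the unique rising chain of the rank-two interval $[x_{d-1},x_{d+1}]$, and call the result $c_k$. The following properties hold:

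\begin{itemize}
\item $c_k \neq c_i$, because the original two-step segment is not rising.
\item $\lambda(c_k)<\lambda(c_i)$ strictly, since the rising chain of $[x_{d-1},x_{d+1}]$ is the strict lexicographic minimum there. Hence $k<i$.
\item $c_k=c_i\setminus\{x_d\}$ plus one new element, so $|c_i\cap c_k|=|c_i|-1$.
\item $x_d\notin c_j$, because $a<d<b$ and $c_j$ meets $c_i$ in $(x_a,x_b)$ nowhere. Therefore $c_i\cap c_j\subseteq c_i\setminus\{x_d\}\subseteq c_k$.
\end{itemize}

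This completes the plan. The main obstacle I anticipate is exactly the bookkeeping in the key step. One must show that the first divergence interval $[x_a,x_b]$ contains a descent of $c_i$, handling ties in the lexicographic order correctly. The uniqueness of rising chains is what rules out ties there. Everything else is routine.
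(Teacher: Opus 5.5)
Your argument is correct and is exactly Bj\"{o}rner's original proof: take the stretch $[x_a,x_b]$ where $c_i$ and $c_j$ disagree, use uniqueness of the rising chain to show that $c_i$ has a descent there, and swap in the rising chain of the corresponding rank-two interval. The paper does not reproduce this argument but simply cites it. The tentative remarks in your key-step bullets can be dropped, since your ``Justifying non-rising'' paragraph already handles ties cleanly.
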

We denote by $\hat{0}$ the minimum and by $\hat{1}$ the maximum of $P$. Theorem \ref{thm:EL.shellability} gives precise information about the combinatorics and the topology of $\hat{P}= P \setminus \{\hat{0}, \hat{1}\}$.  
Let $\hat{c_{i}}=c_{i} \setminus \{\hat{0}, \hat{1}\}$ for every $1 \leq i \leq m$. 
The order $\hat{c_{1}},\hat{c_{2}},\dots,\hat{c_{m}}$ induces a shelling order for $\Delta(\hat{P})$.
Also, the simplicial complex $\Delta(\hat{P})$ is homotopy equivalent to a wedge of $(\operatorname{rank}(P)-2)$-dimensional spheres, where the number of spheres is equal to the number of falling maximal chains of $P$ with respect to any EL-labeling. Hence, studying the EL-labelings of a poset $P$ also gives precise information about the homology of the order complex $\Delta(\hat{P})$.

We now define a particular edge labeling for intervals in finite Coxeter groups, the natural edge labeling.
Let \(\mathcal{R}\) be the set of reflections of a finite Coxeter group \(W\), and let $w \in W$ be a quasi-Coxeter element.
We define the natural edge labeling $\lambda$ of \(\inter\) as follows:
\begin{equation}\label{eq:natural.edge.labeling}
 \lambda(u,v)=u^{-1}v \in \mathcal{R}
\end{equation}
for every cover relation \(u \lessdot v\) in \(\inter\).

\begin{theorem}[{\cite[Theorem 4.2]{AthanasiadisBradyWatt2005}}]
    \label{thm:EL-labeling.for.Coxeter.elements}
    Let \(W\) be a finite Coxeter group and let \(w \in W\) be a Coxeter element. Then there exists a total order on the set of 
    reflections $\mathcal{R}$ such that the natural edge labeling $\lambda:\E(\inter)\to \mathcal{R}$ of \(\inter\) is an EL-labeling.
\end{theorem}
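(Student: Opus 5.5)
The plan follows the strategy of Athanasiadis, Brady and Watt: fix a specific Coxeter element, build the total order on $\refl$ from a reduced word of the longest element, and verify the two conditions of \Cref{def:EL.labeling} by induction on rank after reducing every interval to an interval of the form $[1,x]$.

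\textbf{Choice of $w$ and of the order.} All Coxeter elements are conjugate, and conjugation induces an isomorphism of labeled intervals $[1,w]\to[1,gwg^{-1}]$ that relabels $t\mapsto gtg^{-1}$. So it suffices to treat one Coxeter element and transport the order. We take a bipartite Coxeter element $c=c_+c_-$, where $c_\pm$ is the product of the simple reflections in one part of the (bipartite) Coxeter graph. The longest element then has a reduced word $w_0=(c_+c_-)^{h/2}$, or a suitable truncation of $(c_+c_-c_+\cdots)$ when $h$ is odd. Reading the inversion sequence of this word gives a reflection order $t_1<t_2<\cdots<t_N$ in the sense of Dyer. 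It begins with the reflections of $c_+$, ends with those of $c_-$, and satisfies $c=t_1\cdots t_n$ as an increasing factorization.

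\textbf{Reduction to intervals $[1,x]$.} For $u\le v\le c$, left multiplication by $u^{-1}$ is a label-preserving isomorphism $[u,v]\cong[1,u^{-1}v]$, because labels are $a^{-1}b$. The element $x=u^{-1}v$ lies below $c$, so it is a parabolic Coxeter element of its parabolic closure $W_x$, and $[1,x]\subseteq W_x$ by \Cref{lem:mov-embedding}. By Dyer's theory, the restriction of a reflection order to the reflections of a reflection subgroup is again a reflection order of that subgroup. Hence it suffices to prove the following claim: \emph{for every finite Coxeter group $W'$, every Coxeter element $x$ of $W'$ lying below $c$, and the restricted order, $[1,x]$ has a unique rising maximal chain, and it is lexicographically first.}

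\textbf{Induction on $\operatorname{rank}(x)$.} Let $t$ be the smallest reflection below $x$. The lexicographically first maximal chain must start with the edge $1\lessdot t$, and by induction applied to $[t,x]\cong[1,t^{-1}x]$ (which has smaller rank) it continues along the rising chain there. Two facts remain to be shown:
\begin{enumerate}[label=(\alph*)]
\item any rising factorization $x=r_1\cdots r_k$ has $r_1=t$;
\item the chain starting with $t$ is indeed rising, that is, $t$ is smaller than every reflection below $t^{-1}x$.
\end{enumerate}
Given (a), uniqueness of the rising chain follows from the inductive uniqueness in $[t,x]$.

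\textbf{The main obstacle.} The real difficulty is (a) and (b). My approach is to reduce both to rank two via the Hurwitz action, which is transitive on reduced factorizations of $x$. Moving the smallest factor leftward by Hurwitz moves $r_ir_{i+1}=r_{i+1}'r_i$ changes only pairs lying in dihedral reflection subgroups. Dyer's defining property of reflection orders says that the reflections of any dihedral reflection subgroup appear in the order $\alpha,\ s_\alpha(\beta),\dots,\beta$, i.e.\ in the dihedral (cyclic) order. Combined with the fact that $x$ is a Coxeter element of $W'$ compatible with the bipartite word, this should show the following in each rank-two interval $[1,ab]$ below $c$: the unique increasing pair is the one whose first factor is minimal, and a Hurwitz move never pushes a smaller reflection to the right of a larger one. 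An exchange argument then gives (a) and (b). If this general argument fails in some case, the fallback is ABW's route: verify the rank-two statement type by type for the bipartite order, which is a finite check in the exceptional types and explicit for the classical types using (signed) permutation models.
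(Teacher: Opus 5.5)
The paper does not prove this statement; it cites Athanasiadis--Brady--Watt. Your setup is the standard one and is sound as far as it goes: conjugating to a bipartite $c$, taking the inversion order of the reduced word $c_+c_-c_+\cdots$ of $w_0$, reducing every interval to some $[1,x]$ with $x\le c$, and restricting the order to $W_x$. But the proposal stops exactly where the theorem starts. Item (b) is not an obstacle at all. It holds for every total order by \Cref{rmk:rising.chain}: if $t'\le tx$, then $t'\le x$ by a Hurwitz move and $t'\ne t$, so $t\prec t'$ by minimality of $t$. The whole content of the theorem is therefore (a). This is the statement that the bipartite order is compatible with $c$ in the strong sense of ABW: every $u\le c$, of every rank, has a unique increasing reduced factorization. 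For this you only say that a Hurwitz exchange argument ``should'' work, and offer a fallback. No argument is given, and this is a genuine gap.

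Both halves of your sketched reduction are substantive.

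\textbf{Rank two.} This does not follow from Dyer's dihedral axiom alone. A dihedral order $\tau_1\prec\cdots\prec\tau_m$ on the reflections of $W_y$ gives a unique rising chain for only one of the two rotations $y^{\pm1}$. For the other rotation, $m-1$ of its $m$ factorizations $\tau_i\tau_{i+1}$ are rising. So you must prove that every rank-two $y\le c$ has the correct orientation relative to the word for $w_0$. Your own setup gets this orientation wrong. The first $n$ reflections multiply to $t_1\cdots t_n=c_+\cdot(c_+c_-c_+)=c_-c_+=c^{-1}$, not to $c$. In $A_2$, with $t_1=(1,2)\prec t_2=(1,3)\prec t_3=(2,3)$, the element $c^{-1}=(1,3,2)=(1,2)(1,3)=(1,3)(2,3)$ has two rising chains. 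By contrast, $c=(1,2,3)$ has the single rising factorization $t_1t_3$.

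\textbf{Higher rank.} Even granting rank two, the passage to all ranks is precisely Williams's conjecture, i.e.\ \Cref{thm:main-shellability-refined}. The paper obtains it only through the case-by-case inequality chases of \Cref{thm:compatible-EL-ABD} and computer checks in the exceptional types. Thomas's reduction in \Cref{rem:thomas.proof} still leaves a finite check. So this step cannot be replaced by an unspecified exchange argument.

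Your fallback also misdescribes ABW. Their proof is case-free and establishes the all-ranks compatibility of the bipartite order directly, rather than checking a rank-two condition type by type.
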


To prove EL-shellability of intervals, it is convenient to recall the following two standard facts, which also apply to intervals below quasi-Coxeter elements $w \in W$.

\begin{remark}
     Let $\lambda$ be the natural edge labeling of $[1, w]$, where the set of labels $\refl$ is totally ordered.
     Then every interval $[u,v] \subseteq [1,w]$ has a unique lexicographically smallest maximal chain, and this chain is rising.
     The proof is the same as \cite[Theorem 3.5 (i)]{AthanasiadisBradyWatt2005} and relies on $\refl$ being closed under conjugation.
    \label{rmk:rising.chain}
\end{remark}

In the previous remark, one can analogously observe that every interval $[u, v] \subseteq [1,w]$ has a unique colexicographically largest maximal chain, and this chain is rising.
Here, the colexicographic order is simply the lexicographic order applied to the reversed sequence of edge labels; that is, two maximal chains are compared by looking first at their final labels, then at their penultimate labels, and so on.

\begin{remark}
    Let $\uvr$ be an interval in $[1, w]$ and let $u' = u^{-1}v$. Then, left-multiplication by $u$ is a poset isomorphism 
    $f:[1,u'] \to \uvr$ such that $\lambda(x,y)=\lambda(f(x),f(y))$ for all cover relations $(x,y) \in \E([1, u'])$.
    The proof is the same as \cite[Lemma 3.7]{AthanasiadisBradyWatt2005}.
    \label{rmk:interval.isomorphism}
\end{remark}

We say that a total order of the set of reflections $\refl$ is $w$-compatible if every rank-two interval $[1, u] \subseteq [1, w]$ has a unique rising maximal chain. This is clearly a necessary condition for the natural edge labeling to be an EL-labeling of the entire interval $[1,w]$.
Note that this is a much weaker condition than the $w$-compatibility of \cite{AthanasiadisBradyWatt2005}.

In the case of Coxeter elements, Williams conjectured that the $w$-compatibility condition is also sufficient to obtain an EL-labeling; more generally, Mühle conjectured that the same holds for Coxeter elements of well-generated complex reflection groups \cite[Conjecture 4.4]{muhle2015shellability}.
By \cite[Theorem 5.23]{muhle2020connectivity}, for intervals in finite Coxeter groups, the natural edge labeling of $[1,w]$ is an EL-labeling with respect to a fixed total order if and only if that order is $w$-compatible and $[1,w]$ is totally well-covered with respect to it, in the sense of \cite[Definition 5.17]{muhle2020connectivity}.

\section{EL-shellability in the infinite families}\label{sec:shellability-type-D}

This section is dedicated to proving \Cref{thm:main-shellability,thm:main-shellability-refined} for the infinite families $A_n$, $B_n$, and~$D_n$.
First, we introduce some terminology and two auxiliary results.
In all of them, $w$ is an element of a finite Coxeter group $W$ and $\prec$ is a $w$-compatible total order on $\refl$.

\begin{definition}
    \label{def:terminal}
    A reflection $r \leq w$ is \emph{$w$-terminal} (with respect to $\prec$) if $t \prec r$ for every reflection $t \leq wr$.
\end{definition}

If the interval $[1,wr]$ has a unique rising maximal chain, then this chain is the colexicographically largest one (see \Cref{rmk:rising.chain} and the subsequent discussion), so it ends with the $\prec$-largest reflection below $wr$.
Therefore, appending the cover relation $wr \lessdot w$, which is labeled $r$, gives a rising maximal chain of $[1,w]$ if and only if $r$ is $w$-terminal.

\begin{remark}[Rank-two intervals]
    \label{rmk:rank-two}
    Let $u \in [1,w]$ be an element of rank $2$.
    The maximal chains of $[1,u]$ are $1 \lessdot r \lessdot u$, labeled $(r, ru)$, where $r$ ranges over the reflections below $u$.
    The map $r \mapsto ru$ permutes the reflections below $u$ without fixed points, so each of its cycles contains some $r$ with $r \prec ru$.
    Since $\prec$ is $w$-compatible, $[1,u]$ has a unique rising maximal chain, and therefore this permutation is a single cycle.
    Hence, the reflections below $u$ can be cyclically ordered $r_1, \dots, r_m$ (with indices modulo $m$) so that $u = r_h r_{h+1}$ for every $h$, with a unique index $h$ such that $r_h \prec r_{h+1}$.
\end{remark}

The key step in the proof of EL-shellability is to show that two distinct reflections cannot both be $w$-terminal.
The following lemma rules this out for a recurring configuration of reflections, which arises in each of types $A$, $B$, and $D$.
\Cref{fig:typeA-crossing-reflections} shows this configuration in type $A$: two crossing reflections $r$ and $r'$ are the diagonals of a quadrilateral with sides $a_1, \dots, a_4$, and the elements $a_1a_2$, $a_2a_3$, $a_3a_4$, $a_4a_1$ correspond to the four triangles formed by two consecutive sides and a diagonal.

\begin{figure}[ht]
\centering
\begin{tikzpicture}[scale=0.8]

\def\R{2}

\draw[line width=1.1pt] (0,0) circle (\R);

\coordinate (Pi) at (45:\R);
\coordinate (Pk) at (-15:\R);
\coordinate (Pj) at (-90:\R);
\coordinate (Pl) at (-135:\R);

\draw[black, semithick] (Pi) -- (Pj);
\draw[black, semithick] (Pk) -- (Pl);

\draw[semithick, dashed] (Pi) -- (Pk);
\draw[semithick, dashed] (Pk) -- (Pj);
\draw[semithick, dashed] (Pj) -- (Pl);
\draw[semithick, dashed] (Pl) -- (Pi);

\foreach \x in {Pi,Pk,Pj,Pl}
{
    \fill (\x) circle (3.2pt);
}

\node[xshift=8pt,yshift=5pt] at (Pi) {$i$};
\node[xshift=10pt,yshift=0pt]  at (Pk) {$k$};
\node[xshift=3pt,yshift=-10pt] at (Pj) {$j$};
\node[xshift=-8pt,yshift=-8pt] at (Pl) {$\ell$};

\node[font=\small] at (1.40, 0.25) {$a_1$};
\node[font=\small] at (0.75, -1.12) {$a_2$};
\node[font=\small] at (-0.45, -1.52) {$a_3$};
\node[font=\small] at (-0.55, 0.02) {$a_4$};
\node[font=\small] at (0.55, 0.15) {$r$};
\node[font=\small] at (-0.55, -0.92) {$r'$};

\end{tikzpicture}
\caption{Two crossing reflections $r = (i,j)$ and $r' = (k,\ell)$ in type~$A_n$, and the reflections $a_1, \dots, a_4$ of \Cref{lem:square} (dashed).}
\label{fig:typeA-crossing-reflections}
\end{figure}

\begin{lemma}
    \label{lem:square}
    Let $r, r', a_1, a_2, a_3, a_4$ be reflections with $a_2, a_4 \leq wr$ and $a_1, a_3 \leq wr'$.
    Suppose that $a_1 a_2, a_2 a_3, a_3 a_4$, and $a_4 a_1$
    are rank-two elements of $[1,w]$ of type $A_2$, and that the reflections below them are $a_1, a_2, r$; $a_2, a_3, r'$; $a_3, a_4, r$; and $a_4, a_1, r'$, respectively.
    Then $r$ and $r'$ are not both $w$-terminal.
\end{lemma}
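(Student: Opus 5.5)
The plan is to argue by contradiction. Suppose both $r$ and $r'$ are $w$-terminal. I will turn this, together with the $w$-compatibility of $\prec$ applied to the four rank-two elements $a_1a_2$, $a_2a_3$, $a_3a_4$, $a_4a_1$, into a cyclic chain of strict inequalities among $a_1,\dots,a_4$.

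\emph{Step 1 (terminality).} Since $a_2,a_4\le wr$ and $r$ is $w$-terminal, \Cref{def:terminal} gives $a_2\prec r$ and $a_4\prec r$. These reflections are distinct from $r$, because each rank-two element in the hypothesis has three distinct reflections below it. Likewise, $a_1,a_3\le wr'$ gives $a_1\prec r'$ and $a_3\prec r'$.

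\emph{Step 2 (rank-two intervals).} Take $u=a_1a_2$. Its maximal chains are $1\lessdot t\lessdot u$ with labels $(t,tu)$, and by \Cref{rmk:rank-two} the map $t\mapsto tu$ is a single $3$-cycle on $\{a_1,a_2,r\}$.
\begin{itemize}
    \item $a_1u=a_2$, so $a_1\mapsto a_2$.
    \item $a_2u\ne a_1$: otherwise $a_1a_2=a_2a_1$, so $a_1,a_2$ commute, which is impossible in type $A_2$. Also $a_2u\ne a_2$, since the map has no fixed points. Hence $a_2u=r$, so $a_2\mapsto r$ and $r\mapsto a_1$.
\end{itemize}
The three chains are therefore labeled $(a_1,a_2)$, $(a_2,r)$, $(r,a_1)$, and exactly one of them is rising. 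By Step 1, $(a_2,r)$ is rising. Hence the other two are not, which gives $a_2\prec a_1$ and $a_1\prec r$.

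The same argument applies to the other three elements:
\begin{itemize}
    \item $u=a_2a_3$, with cycle $a_2\mapsto a_3\mapsto r'\mapsto a_2$. Since $a_3\prec r'$ is the rising pair, we get $a_3\prec a_2$.
    \item $u=a_3a_4$, with cycle $a_3\mapsto a_4\mapsto r\mapsto a_3$. Since $a_4\prec r$ is the rising pair, we get $a_4\prec a_3$.
    \item $u=a_4a_1$, with cycle $a_4\mapsto a_1\mapsto r'\mapsto a_4$. Since $a_1\prec r'$ is the rising pair, we get $a_1\prec a_4$.
\end{itemize}

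\emph{Step 3 (contradiction).} Combining these inequalities gives
\[
a_2\prec a_1\prec a_4\prec a_3\prec a_2,
\]
which is impossible in a total order. Hence $r$ and $r'$ are not both $w$-terminal.

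The only delicate point is identifying the cyclic order in Step 2, namely that $a_hu$ is the third reflection and not $a_{h-1}$. This is exactly where the type-$A_2$ hypothesis is used, to exclude commuting pairs. Everything else is bookkeeping with \Cref{rmk:rank-two}.
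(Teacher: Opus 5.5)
Your proof is correct and is essentially the paper's argument: in each of the four rank-two intervals, terminality of $r$ or $r'$ makes the chain ending in that reflection rising, so $w$-compatibility forces $a_{h+1} \prec a_h$ (indices modulo $4$), and these four inequalities form a cycle. The only difference is that you derive the three chain labels explicitly from \Cref{rmk:rank-two}, where the paper simply states them, and you record the extra inequality $a_1 \prec r$, which is never used.
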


\begin{proof}
    Suppose that $r$ and $r'$ are both $w$-terminal.
    The three maximal chains of $[1, a_1a_2]$ are labeled $(a_1, a_2)$, $(a_2, r)$, and $(r, a_1)$ (see \Cref{rmk:rank-two}).
    The second one is rising, because $a_2 \leq wr$ and $r$ is $w$-terminal; by $w$-compatibility, the first one is not, i.e., $a_2 \prec a_1$.
    In the same way, the elements $a_2a_3$, $a_3a_4$, and $a_4a_1$ give $a_3 \prec a_2$, $a_4 \prec a_3$, and $a_1 \prec a_4$, a contradiction.
\end{proof}

We now prove \Cref{thm:main-shellability-refined} for the infinite families.

\begin{theorem}
    Let $W$ be a finite Coxeter group of type $A_{n}$, $B_n$, or $D_n$, let \(w\in W\) be a quasi-Coxeter element, and let \(\prec\) be a \(w\)-compatible total order on \(\mathcal R\).
    Then the natural edge labeling of \([1,w]\), with label set ordered by \(\prec\), is an EL-labeling.
    \label{thm:compatible-EL-ABD}
\end{theorem}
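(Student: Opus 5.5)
We argue by induction on the rank of $w$, using the reduction already outlined in the paper. Condition (ii) of \Cref{def:EL.labeling} holds automatically by \Cref{rmk:rising.chain}: the lexicographically smallest maximal chain of any interval is rising. So only uniqueness of the rising chain in each interval $[u,v]\subseteq[1,w]$ needs proof.

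By \Cref{rmk:interval.isomorphism}, $[u,v]$ is label-isomorphic to $[1,u^{-1}v]$. Moreover $u^{-1}v\le w$, so $u^{-1}v$ lies in the parabolic closure $W_{u^{-1}v}$. By \Cref{lem:mov-embedding}, $[1,u^{-1}v]$ is the interval below a parabolic quasi-Coxeter element. Its irreducible components are again of types $A$, $B$, $D$ and are products of such intervals with quasi-Coxeter tops. The restriction of $\prec$ is still compatible, since rank-two intervals below $u^{-1}v$ are rank-two intervals below $w$. By induction on rank (the base case, rank two, being $w$-compatibility itself), it therefore suffices to show that $[1,w]$ itself has a unique rising maximal chain.

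For this, we use the discussion after \Cref{def:terminal}. For each reflection $r\le w$, the subinterval $[1,wr]$ has, by induction, a unique rising chain, namely its colexicographically largest one. Consequently the rising maximal chains of $[1,w]$ correspond bijectively to the $w$-terminal reflections. At least one $w$-terminal reflection exists, by \Cref{rmk:rising.chain} applied to $[1,w]$. The whole claim therefore reduces to the following statement.
\begin{center}
\emph{Two distinct reflections $r\ne r'$ below $w$ cannot both be $w$-terminal.}
\end{center}

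This is the main obstacle, and we plan to attack it by a case analysis using the combinatorial models. In type $A$ the model is noncrossing partitions of a disc. In type $B$ (Coxeter elements) it is centrally symmetric noncrossing partitions. In type $D$ the quasi-Coxeter element is $w_{p,q}$, and the model is the annular partitions $\annularncD(p,q)$; the Coxeter case of $D_n$ is handled similarly or via the known Coxeter-element picture.

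Given $r\ne r'$, we first look for the configuration of \Cref{lem:square}. Suppose $r$ and $r'$ are diagonals that cross, in the disc or the annulus, with distinct endpoints. Then the four sides $a_1,\dots,a_4$ of the quadrilateral they span satisfy the hypotheses: $a_2,a_4$ are noncrossing with respect to the Kreweras-type complement $wr$, $a_1,a_3$ likewise for $wr'$, and the products $a_ia_{i+1}$ are $3$-cycles whose reflections are the stated triples. \Cref{lem:square} then finishes these cases.

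The remaining configurations need separate treatment:
\begin{enumerate}[label=(\alph*)]
\item $r$ and $r'$ share an endpoint, or do not cross.
\item In type $B$ or $D$, one of them is of the form $(i,-i)$, or the pair is $((i,j)),((i,-j))$.
\item In the annulus, $r$ and $r'$ connect the two circles, possibly winding differently around the annulus.
\end{enumerate}

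For (a), we note that if $r$ and $r'$ do not cross, then $r'\le wr$ and $r\le wr'$. In that case either $r\prec r'$ or $r'\prec r$ already contradicts terminality. We expect the genuinely hard part to be (c), where $r$ and $r'$ are \emph{not} comparable to each other's complements. There we would try to locate a different quadrilateral, with sides chosen among the boundary-connecting reflections, to which \Cref{lem:square} applies. Failing that, we would run the analogous cyclic chain of inequalities using a rank-two interval of type $B_2$ or $A_1\times A_1$ and \Cref{rmk:rank-two}, deriving a cyclic contradiction in $\prec$.
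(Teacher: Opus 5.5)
Your reduction is the same as the paper's. You induct on rank via the product decomposition of lower intervals, observe that rising maximal chains of $[1,w]$ correspond to $w$-terminal reflections, and reduce everything to showing that two distinct reflections are not both $w$-terminal. (A minor point: that $u^{-1}v$ is a parabolic quasi-Coxeter element comes from the Hurwitz-action result of Baumeister et al., not from \Cref{lem:mov-embedding}.)

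The gap is that this last statement, which carries all the content, is not proved, and the parts you sketch contain errors. In (a), the double containment $r'\le wr$, $r\le wr'$ is valid for commuting pairs with $rr'\le w$, but it fails for reflections sharing an endpoint. For $w=(1,2,\dots,n+1)$, $r=(1,2)$, $r'=(2,3)$, one gets $wr=(1,3,4,\dots,n+1)$, which fixes $2$. So $r'\not\le wr$, and terminality only gives $r\prec r'$.

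The missing idea is the paper's uniform step. If $r$ and $r'$ occur together in some reduced factorization, write $w=urr'$ and set $t=rr'r$. Then $r\le wr'=ur$ and $t\le wr=ut$, so $t\prec r\prec r'$. Hence $[1,rr']$ has the two rising chains $(r,r')$ and $(t,r)$, contradicting $w$-compatibility. This reduces the problem to $w$-crossing pairs, i.e.\ pairs that never occur together in a reduced factorization of $w$.

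For $w$-crossing pairs, your assertion that crossing chords with distinct endpoints are always settled by \Cref{lem:square} is false in type $B$. For $r=((i,-j))$ and $r'=((k,-\ell))$ with $i<j<k<\ell$, the chords cross, but the side products, such as $((i,k))((j,-k))=((i,k,-j))$, are not below $w$. The real work lies in this pair, in $((i,k)),((i,-j))$ and $[i],((j,-k))$ in type $B$, and in the outer--outer pairs $((i,k)),((i,-j))$ and $((i,-j)),((k,-\ell))$ in type $D$. The paper rules these out by explicit cyclic chains of compatibility inequalities through many rank-two elements. It uses rotation by powers of $w$, conjugation by $[p+1,\dots,n]^q$ (to assume $((i,-h))\prec((i,h))$), and subcase splits.

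Your items (b) and (c) contain no argument, and (c) is misidentified as the hard case. If one of $r,r'$ is a connectivity reflection, one can write $r=((x,y))$ and $r'=((z,v))$ with $((x,z,y,v))\in[1,w]$, and \Cref{lem:square} applies directly to its four sides. As it stands, the proposal establishes the reduction but not the theorem.
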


\begin{proof}
    We work by induction on the rank $n$.
    By \Cref{rmk:rising.chain,rmk:interval.isomorphism}, it is sufficient to show that every interval of the form $[1, u] \subseteq [1, w]$ has at most one rising maximal chain.
    This trivially holds for rank $n \leq 2$ thanks to the $w$-compatibility condition.

    For $n \geq 3$, consider a reflection $r \in \refl$.
    By \cite[Corollary 6.11]{baumeister2015hurwitzactionfinitecoxeter}, the coatom $wr$ is a quasi-Coxeter element of a parabolic subgroup $W' \subseteq W$.
    Since $W$ is of type $A$, $B$, or $D$, we have $W' = W_1 \times \dots \times W_m$ with each $W_i$ irreducible of type $A$, $B$, or $D$ and of rank $< n$; correspondingly, $wr = w_1 \cdots w_m$ with $w_i$ a quasi-Coxeter element of $W_i$, and $[1,wr] \cong [1,w_1] \times \dots \times [1,w_m]$.
    Note that this product respects the natural edge labeling.
    Moreover, the restriction of $\prec$ to the reflections of $W_i$ is $w_i$-compatible, since every rank-two interval of $[1,w_i]$ is also a rank-two interval of $[1,w]$.
    By induction, every interval in each $[1, w_i]$ has at most one rising maximal chain, and so this is also true for every interval in $[1, wr]$.
    By letting $r$ vary, we obtain that every interval $[1,u] \subseteq [1, w]$ with $u \neq w$ has at most one rising maximal chain.

    It only remains to show that the full interval $[1,w]$ has at most one rising maximal chain.
    Suppose by contradiction that there are two such chains.
    They must pass through different coatoms $wr$ and $wr'$, because we already know that there is only one rising maximal chain below any coatom $wr$.
    By the discussion after \Cref{def:terminal}, $r$ and $r'$ are both $w$-terminal.

    We claim that $r$ and $r'$ are $w$-crossing, i.e., they do not occur together in any reduced reflection factorization of $w$.
    Indeed, suppose that they occur together.
    After interchanging \(r\) and \(r'\) if necessary, and applying Hurwitz moves, we may write $w=u r r'$ with $\ell_\refl(u) = n-2$.
    Set $t = rr'r$.
    Then we have $wr' = ur$ and $wr = ut$; we deduce that $r \leq wr'$ and $t \leq wr$, and so $t \prec r \prec r'$.
    Therefore, the rank-two interval $[1, rr']$ contains the two rising maximal chains labeled $(r, r')$ and $(t, r)$, contradicting $w$-compatibility.

    To conclude the proof, we show, case by case, that there are no two $w$-crossing $w$-terminal reflections $r, r'$.

    \textbf{Case $A_n$.}
    Up to conjugation, $w = (1, 2, \dots, n+1)$.
    Since $r$ and $r'$ are $w$-crossing, we have $r = (i, j)$ and $r' = (k, \ell)$ with $i, k, j, \ell$ all distinct and, possibly after exchanging $k$ and $\ell$, appearing in this cyclic order along $w$.
    Indeed, if $r$ and $r'$ shared an endpoint, or had disjoint and non-interleaved endpoints, then $rr'$ or $r'r$ would be a $3$-cycle whose entries appear in the cyclic order of $w$, or a product of two noncrossing transpositions; in both cases it would be an element of rank $2$ below $w$, and so $r$ and $r'$ would occur together in a reduced reflection factorization of $w$.
    The sides $a_1 = (i,k)$, $a_2 = (k,j)$, $a_3 = (j,\ell)$, and $a_4 = (\ell,i)$ of the quadrilateral in \Cref{fig:typeA-crossing-reflections} satisfy the hypotheses of \Cref{lem:square}, a contradiction.

    \textbf{Case $B_n$.}
    Up to conjugation, $w = [1, 2, \dots, n]$.
    Conjugation by $w$ preserves $[1,w]$ and transforms $\prec$ into another $w$-compatible order; hence, up to rotation (i.e., conjugation by a power of $w$) and possibly exchanging $r$ and $r'$, the reflections $r$ and $r'$ are as in one of the following six cases, with $i<j<k<\ell$ (see \Cref{fig:typeB-crossing-reflections}).
    To determine which reflections lie below $wr$ and $wr'$, note that, for $1 \leq a \leq n$ and $a < b \leq n$,
    \begin{align*}
        w \cdot [a] &= ((1, \dots, a, -(a+1), \dots, -n)), \\
        w \cdot ((a,b)) &= [1, \dots, a, b+1, \dots, n]((a+1, \dots, b)), \\
        w \cdot ((a,-b)) &= ((1, \dots, a, -(b+1), \dots, -n))[a+1, \dots, b]
    \end{align*}
    (where some of these lists may be empty).
    We denote by $w'$ an element of this type.
    A reflection of the form $((x,y))$ is below $w'$ if and only if $x$ and $y$ (or their opposites) are entries of the same factor $((\dots))$, or they are both indices of the unique zero-block, which is of the form $[\dots]$.
    Regarding reflections of the form $[z]$, we have $[z] \le w'$ if and only if $z$ (and hence $-z$) is in the zero-block.

\begin{figure}[t]
\centering
\begin{tikzpicture}[scale=0.8]

\begin{scope}[xshift=0cm]

 \draw[line width=1.1pt] (0,0) circle (2);
 \foreach \i/\lab/\ang in { i/i/90, j/j/45, k/k/0, l/\ell/-45, -i/-i/-90, -j/-j/-135, -k/-k/180, -l/-\ell/135 }{ \coordinate (P\i) at (\ang:2); \fill (P\i) circle (3.2pt); \node at (\ang:2.55) {$\lab$}; }  
 \draw[semithick] (Pi)--(Pk);
 \draw[semithick] (Pj)--(Pl);
 \draw[semithick] (P-i)--(P-k);
 \draw[semithick] (P-j)--(P-l); 
\end{scope}

\begin{scope}[xshift=7cm,yshift=0cm]
 \draw[line width=1.1pt] (0,0) circle (2);
 \foreach \i/\lab/\ang in { i/i/90, j/j/30, -i/-i/-90, -j/-j/-150 }{ \coordinate (P\i) at (\ang:2); \fill (P\i) circle (3.2pt); \node at (\ang:2.55) {$\lab$}; } 

 \draw[semithick] (Pi)--(Pj);
 \draw[semithick] (Pj)--(P-i);
 \draw[semithick] (P-i)--(P-j);
 \draw[semithick] (Pi)--(P-j);
\end{scope}

\begin{scope}[xshift=0cm,yshift=-6.1cm]
 \draw[line width=1.1pt] (0,0) circle (2);
 \foreach \i/\lab/\ang in { i/i/90, j/j/30, k/k/-30, -i/-i/-90, -j/-j/-150, -k/-k/150 }{ \coordinate (P\i) at (\ang:2); \fill (P\i) circle (3.2pt); \node at (\ang:2.55) {$\lab$}; } 

 \draw[semithick] (Pi)--(Pk);
 \draw[semithick] (Pj)--(P-i);
 \draw[semithick] (P-i)--(P-k);
 \draw[semithick] (Pi)--(P-j);
\end{scope}

\begin{scope}[xshift=7cm,yshift=-6.1cm]

 \draw[line width=1.1pt] (0,0) circle (2);
 \foreach \i/\lab/\ang in { i/i/90, j/j/45, k/k/0, l/\ell/-45, -i/-i/-90, -j/-j/-135, -k/-k/180, -l/-\ell/135 }{ \coordinate (P\i) at (\ang:2); \fill (P\i) circle (3.2pt); \node at (\ang:2.55) {$\lab$}; } 

 \draw[semithick] (Pi)--(P-j);
 \draw[semithick] (Pk)--(P-l);
 \draw[semithick] (P-i)--(Pj);
 \draw[semithick] (P-k)--(Pl);
\end{scope}

\begin{scope}[xshift=0cm,yshift=-12.2cm]

 \draw[line width=1.1pt] (0,0) circle (2);
 \foreach \i/\lab/\ang in { i/i/90, j/j/30, k/k/-30, -i/-i/-90, -j/-j/-150, -k/-k/150 }{ \coordinate (P\i) at (\ang:2); \fill (P\i) circle (3.2pt); \node at (\ang:2.55) {$\lab$}; } 

 \draw[semithick] (Pi)--(P-i);
 \draw[semithick] (Pj)--(P-k);
 \draw[semithick] (Pk)--(P-j);
\end{scope}

\begin{scope}[xshift=7cm,yshift=-12.2cm]

 \draw[line width=1.1pt] (0,0) circle (2);
 \foreach \i/\lab/\ang in { i/i/90, j/j/30, -i/-i/-90, -j/-j/-150 }{ \coordinate (P\i) at (\ang:2); \fill (P\i) circle (3.2pt); \node at (\ang:2.55) {$\lab$}; } 

 \draw[semithick] (Pi)--(P-i);
 \draw[semithick] (Pj)--(P-j);
\end{scope}

\end{tikzpicture}
\caption{The six cases of $w$-crossing reflections in type $B_{n}$ (Cases 1-6, from left to right and from top to bottom).}
\label{fig:typeB-crossing-reflections}
\end{figure}
    
    \textbf{Case 1.} Consider $r=((i,k))$ and $r'=((j,\ell))$.
    As in type $A$, the reflections $a_1 = ((i,j))$, $a_2 = ((j,k))$, $a_3 = ((k,\ell))$, and $a_4 = ((i,\ell))$ satisfy the hypotheses of \Cref{lem:square}, a contradiction.

    \textbf{Case 2.} Consider $r=((i,j))$ and $r'=((i,-j))$.
    We have $[i] \leq wr$ and $[j] \leq wr'$; since $r$ and $r'$ are $w$-terminal, it follows that $[i] \prec r$ and $[j] \prec r'$.
    The element $u = [i,j] \in [1,w]$ satisfies $u = [i]((i,j)) = [j]((i,-j))$, so the maximal chains of $[1,u]$ labeled $([i], ((i,j)))$ and $([j], ((i,-j)))$ are both rising, contradicting $w$-compatibility.

    \textbf{Case 3.} Consider $r=((i,k))$ and $r'=((i,-j))$.
    By $w$-compatibility at the indicated rank-two elements, we have
    \begin{alignat*}{2}
        [i] &\prec ((i,-k)) &\qquad& \text{at $[i,k]$, since $[i] \leq wr$} \\
        &\prec ((j,k)) &\qquad& \text{at $((i,-j,-k))$, since $((i,-k)) \leq wr'$} \\
        &\prec ((i,j)) &\qquad& \text{at $((i,j,k))$, since $((j,k)) \leq wr$} \\
        &\prec [i] &\qquad& \text{at $[i,j]$, since $[j] \leq wr'$,}
    \end{alignat*}
    a contradiction.

    \textbf{Case 4.} Consider $r=((i,-j))$ and $r'=((k,-\ell))$.
    By $w$-compatibility at the indicated rank-two elements, we have
    \begin{alignat*}{2}
        [j] &\prec ((i,j)) \prec [i] &\qquad& \text{at $[i,j]$, since $[j] \leq wr$,} \\
        ((i,-k)) &\prec ((j,k)) &\qquad& \text{at $((i,-j,-k))$, since $((i,-k)) \leq wr$,} \\
        ((i,-\ell)) &\prec ((j,\ell)) &\qquad& \text{at $((i,-j,-\ell))$, since $((i,-\ell)) \leq wr$,} \\
        [\ell] &\prec ((k,\ell)) \prec [k] &\qquad& \text{at $[k,\ell]$, since $[\ell] \leq wr'$,} \\
        ((i,k)) &\prec ((i,-\ell)) &\qquad& \text{at $((i,k,-\ell))$, since $((i,k)) \leq wr'$,} \\
        ((j,k)) &\prec ((j,-\ell)) &\qquad& \text{at $((j,k,-\ell))$, since $((j,k)) \leq wr'$.}
    \end{alignat*}
    We now show that $[j] \prec ((j,\ell))$ and $[\ell] \prec ((j,-\ell))$.
    This yields the desired contradiction: the element $u = [j,\ell] \in [1,w]$ satisfies $u = [j]((j,\ell)) = [\ell]((j,-\ell))$, so the maximal chains of $[1,u]$ labeled $([j], ((j,\ell)))$ and $([\ell], ((j,-\ell)))$ would both be rising, contradicting $w$-compatibility.
    We distinguish two subcases, according to the relative order of $((i,j))$ and $((i,-k))$.

    \textbf{Case 4.1.} $((i,j)) \prec ((i,-k))$.
    Then $((i,j)) \prec ((i,-k)) \prec ((j,k)) \prec ((j,-\ell))$, so $w$-compatibility at $((i,j,-\ell))$ gives $((i,j)) \prec ((i,-\ell))$.
    From $[j] \prec ((i,j)) \prec ((i,-k)) \prec ((j,k))$ and $w$-compatibility at $[j,k]$, we get $[k] \prec ((j,k))$.
    Altogether, $[j] \prec ((i,j)) \prec ((i,-\ell)) \prec ((j,\ell))$ and $[\ell] \prec [k] \prec ((j,k)) \prec ((j,-\ell))$, as claimed.

    \textbf{Case 4.2.} $((i,-k)) \prec ((i,j))$.
    Then $((i,-k)) \prec ((i,j)) \prec [i]$, so $w$-compatibility at $[i,k]$ gives $[k] \prec ((i,k))$.
    Applying $w$-compatibility at $[i,\ell]$ to the inequalities $[\ell] \prec [k] \prec ((i,k)) \prec ((i,-\ell))$ yields $[i] \prec ((i,-\ell))$.
    We now have $((k,\ell)) \prec [k] \prec ((i,k)) \prec ((i,-\ell)) \prec ((j,\ell))$, and hence $((k,\ell)) \prec ((j,k))$ by $w$-compatibility at $((j,k,\ell))$.
    Altogether, $[j] \prec [i] \prec ((i,-\ell)) \prec ((j,\ell))$ and $[\ell] \prec ((k,\ell)) \prec ((j,k)) \prec ((j,-\ell))$, as claimed.

    \textbf{Case 5.} Consider $r=[i]$ and $r'=((j,-k))$.
    By $w$-compatibility at the indicated rank-two elements, we have
    \begin{alignat*}{2}
        [j] &\prec ((i,j)) &\qquad& \text{at $[i,j]$, since $((i,-j)) \leq wr$} \\
        &\prec ((i,-k)) &\qquad& \text{at $((i,j,-k))$, since $((i,j)) \leq wr'$} \\
        &\prec [k] &\qquad& \text{at $[i,k]$, since $((i,-k)) \leq wr$} \\
        &\prec [j] &\qquad& \text{at $[j,k]$, since $[k] \leq wr'$,}
    \end{alignat*}
    a contradiction.

    \textbf{Case 6.} Consider $r=[i]$ and $r'=[j]$.
    We have $((i,-j)) \leq wr$ and $((i,j)) \leq wr'$; since $r$ and $r'$ are $w$-terminal, it follows that $((i,-j)) \prec r$ and $((i,j)) \prec r'$.
    The element $u = [i,j] \in [1,w]$ satisfies $u = ((i,-j))[i] = ((i,j))[j]$, so the maximal chains of $[1,u]$ labeled $(((i,-j)), [i])$ and $(((i,j)), [j])$ are both rising, contradicting $w$-compatibility.

    \textbf{Case $D_n$.}
    Up to conjugation, $w = \wpq$, and we use the annular description of $[1,w]$ from \Cref{sec:annular}:
    a reflection is \emph{outer} or \emph{inner} if both its indices lie on the outer or on the inner circle, respectively, and a \emph{connectivity reflection} otherwise.
    An outer and an inner reflection are never $w$-crossing, since their product is a rank-two element below $w$.
    If $r$ and $r'$ are outer (the case of two inner reflections is analogous), then, up to rotation of the outer circle and possibly exchanging $r$ and $r'$, they are as in Cases 1-4 of type $B$, with $1 \leq i<j<k<\ell \leq p$.
    The coatoms $wr$ and $wr'$ are then given by the formulas of type $B$, with $n$ replaced by $p$ and the factor $[p+1, \dots, n]$ appended, and the same membership rule applies.
    We also fix $h \in \{p+1, \dots, n\}$.

    \textbf{Case 1.} Consider $r=((i,k))$ and $r'=((j,\ell))$.
    As in Case 1 of type $B$, the reflections $a_1 = ((i,j))$, $a_2 = ((j,k))$, $a_3 = ((k,\ell))$, and $a_4 = ((i,\ell))$ satisfy the hypotheses of \Cref{lem:square}, a contradiction.

    \textbf{Case 2.} Consider $r=((i,j))$ and $r'=((i,-j))$.
    Then the reflections $a_1 = ((j,h))$, $a_2 = ((i,h))$, $a_3 = ((j,-h))$, and $a_4 = ((i,-h))$ satisfy the hypotheses of \Cref{lem:square}, a contradiction.

    \textbf{Case 3.} Consider $r=((i,k))$ and $r'=((i,-j))$.
    The element $[p+1, \dots, n]^q$ maps each $x \in \{p+1, \dots, n\}$ to $-x$ and fixes $1, \dots, p$, so conjugation by it fixes $w$, $r$, and $r'$, and exchanges $((i,h))$ and $((i,-h))$.
    Since this conjugation also transforms $\prec$ into another $w$-compatible order, we may assume that $((i,-h)) \prec ((i,h))$.
    By $w$-compatibility at the indicated rank-two elements, we have
    \begin{alignat*}{2}
        ((j,k)) &\prec ((i,j)) &\qquad& \text{at $((i,j,k))$, since $((j,k)) \leq wr$,} \\
        ((i,h)) &\prec ((k,h)) &\qquad& \text{at $((i,k,h))$, since $((i,h)) \leq wr$,} \\
        ((i,-k)) &\prec ((j,k)) &\qquad& \text{at $((i,-j,-k))$, since $((i,-k)) \leq wr'$,} \\
        ((j,h)) &\prec ((i,-h)) &\qquad& \text{at $((i,-h,-j))$, since $((j,h)) \leq wr'$.}
    \end{alignat*}
    Then $((j,h)) \prec ((i,-h)) \prec ((i,h))$, so $w$-compatibility at $((i,j,h))$ gives $((i,j)) \prec ((i,h))$.
    From $((i,-h)) \prec ((i,h)) \prec ((k,h))$ and $w$-compatibility at $((i,-h,-k))$, we get $((i,-h)) \prec ((i,-k))$.
    Applying $w$-compatibility at $((j,k,h))$ to the inequalities $((j,h)) \prec ((i,-h)) \prec ((i,-k)) \prec ((j,k))$ yields $((k,h)) \prec ((j,k))$.
    Altogether, $((j,k)) \prec ((i,j)) \prec ((i,h)) \prec ((k,h)) \prec ((j,k))$, a contradiction.

    \textbf{Case 4.} Consider $r=((i,-j))$ and $r'=((k,-\ell))$.
    As in Case 3, we may assume that $((i,-h)) \prec ((i,h))$.
    By $w$-compatibility at the indicated rank-two elements, we have
    \begin{alignat*}{2}
        ((i,-k)) &\prec ((j,k)) &\qquad& \text{at $((i,-j,-k))$, since $((i,-k)) \leq wr$,} \\
        ((i,-\ell)) &\prec ((j,\ell)) &\qquad& \text{at $((i,-j,-\ell))$, since $((i,-\ell)) \leq wr$,} \\
        ((j,h)) &\prec ((i,-h)) &\qquad& \text{at $((i,-h,-j))$, since $((j,h)) \leq wr$,} \\
        ((i,k)) &\prec ((i,-\ell)) &\qquad& \text{at $((i,k,-\ell))$, since $((i,k)) \leq wr'$,} \\
        ((j,k)) &\prec ((j,-\ell)) &\qquad& \text{at $((j,k,-\ell))$, since $((j,k)) \leq wr'$,} \\
        ((\ell,-h)) &\prec ((k,h)) &\qquad& \text{at $((k,h,-\ell))$, since $((\ell,-h)) \leq wr'$.}
    \end{alignat*}
    We distinguish two subcases, according to the relative order of $((i,h))$ and $((j,\ell))$.

    \textbf{Case 4.1.} $((j,\ell)) \prec ((i,h))$.
    Then $((j,h)) \prec ((i,-h)) \prec ((i,h))$, so $w$-compatibility at $((i,j,h))$ gives $((j,h)) \prec ((i,j)) \prec ((i,h))$.
    From $((i,-\ell)) \prec ((j,\ell)) \prec ((i,h))$ and $w$-compatibility at $((i,h,-\ell))$, we get $((i,-\ell)) \prec ((\ell,-h))$.
    Next, $((i,k)) \prec ((i,-\ell)) \prec ((\ell,-h)) \prec ((k,h))$, so $w$-compatibility at $((i,k,h))$ gives $((i,h)) \prec ((k,h))$.
    Applying $w$-compatibility at $((i,-h,-k))$ to the inequalities $((i,-h)) \prec ((i,h)) \prec ((k,h))$ yields $((i,-h)) \prec ((i,-k))$.
    We now have $((j,h)) \prec ((i,-h)) \prec ((i,-k)) \prec ((j,k))$, and hence $((k,h)) \prec ((j,k))$ by $w$-compatibility at $((j,k,h))$.
    Then $((i,j)) \prec ((i,h)) \prec ((k,h)) \prec ((j,k)) \prec ((j,-\ell))$, so $w$-compatibility at $((i,j,-\ell))$ gives $((i,j)) \prec ((i,-\ell))$.
    Finally, from $((\ell,-h)) \prec ((k,h)) \prec ((j,k)) \prec ((j,-\ell))$ and $w$-compatibility at $((j,h,-\ell))$, we get $((\ell,-h)) \prec ((j,h))$.
    Altogether, $((i,-\ell)) \prec ((\ell,-h)) \prec ((j,h)) \prec ((i,j)) \prec ((i,-\ell))$, a contradiction.

    \textbf{Case 4.2.} $((i,h)) \prec ((j,\ell))$.
    Then $((i,-h)) \prec ((i,h)) \prec ((j,\ell))$, so both $((i,h))$ and $((i,-h))$ precede $((j,\ell))$.
    From now on, we only use this fact and the six inequalities above.
    The conjugation of Case 3 preserves this fact and exchanges $((\ell,h))$ and $((\ell,-h))$, and the six inequalities still hold afterwards, since they only use that $r$ and $r'$ are $w$-terminal; hence we may also assume that $((\ell,h)) \prec ((\ell,-h))$.
    Then $((j,h)) \prec ((i,-h)) \prec ((j,\ell))$, so $w$-compatibility at $((j,\ell,h))$ gives $((j,h)) \prec ((\ell,h))$.
    Applying $w$-compatibility at $((k,\ell,h))$ to the inequalities $((\ell,h)) \prec ((\ell,-h)) \prec ((k,h))$ yields $((\ell,h)) \prec ((k,\ell)) \prec ((k,h))$.
    From $((j,h)) \prec ((\ell,h)) \prec ((\ell,-h))$ and $w$-compatibility at $((j,h,-\ell))$, we get $((j,-\ell)) \prec ((\ell,-h))$.
    Next, $((j,k)) \prec ((j,-\ell)) \prec ((\ell,-h)) \prec ((k,h))$, so $w$-compatibility at $((j,k,h))$ gives $((j,k)) \prec ((j,h))$.
    We now have $((i,-k)) \prec ((j,k)) \prec ((j,h)) \prec ((i,-h))$, and hence $((k,h)) \prec ((i,-h))$ by $w$-compatibility at $((i,-h,-k))$.
    From $((j,k)) \prec ((j,h)) \prec ((\ell,h)) \prec ((k,\ell))$ and $w$-compatibility at $((j,k,\ell))$, we get $((j,\ell)) \prec ((k,\ell))$.
    Altogether, $((j,\ell)) \prec ((k,\ell)) \prec ((k,h)) \prec ((i,-h)) \prec ((j,\ell))$, a contradiction.

    Finally, suppose that $r$ or $r'$ is a connectivity reflection.
    As in type $A$, $r$ and $r'$ are represented in the annulus by two crossing chords, which are the diagonals of a quadrilateral (see \Cref{fig:typeD-crossing-reflections}); more precisely, we can write $r = ((x,y))$ and $r' = ((z,v))$ so that $((x,z,y,v)) \in [1,w]$.
    The sides $a_1 = ((x,z))$, $a_2 = ((z,y))$, $a_3 = ((y,v))$, and $a_4 = ((v,x))$ of this quadrilateral satisfy the hypotheses of \Cref{lem:square}, a contradiction.
\end{proof}

\begin{figure}[t]
\centering
\begin{tikzpicture}[scale=0.7]

\begin{scope}[xshift=0cm, scale=0.7]

\def\R{4}
\def\r{1.35}

\draw[line width=1.1pt] (0,0) circle (\R);
\draw[line width=1.1pt] (0,0) circle (\r);

\coordinate (P1)  at (90:\R);
\coordinate (P2)  at (30:\R);
\coordinate (P3)  at (-30:\R);
\coordinate (Pm1) at (-90:\R);
\coordinate (Pm2) at (-150:\R);
\coordinate (Pm3) at (150:\R);

\coordinate (P4)  at (90:\r);
\coordinate (Pm5) at (0:\r);
\coordinate (Pm4) at (-90:\r);
\coordinate (P5)  at (180:\r);

\foreach \x in {P1,P2,P3,Pm1,Pm2,Pm3,P4,Pm5,Pm4,P5}
{
    \fill (\x) circle (4.5pt);
}

\node[yshift=12pt] at (P1) {$1$};
\node[xshift=12pt] at (P2) {$2$};
\node[xshift=12pt] at (P3) {$3$};

\node[yshift=-14pt] at (Pm1) {$-1$};
\node[xshift=-14pt] at (Pm2) {$-2$};
\node[xshift=-14pt] at (Pm3) {$-3$};

\node[yshift=-10pt] at (P4) {$4$};
\node[xshift=16pt] at (Pm5) {$-5$};
\node[yshift=10pt] at (Pm4) {$-4$};
\node[xshift=-14pt] at (P5) {$5$};

 \draw[semithick] (P1)--(P4);
 \draw[semithick] (Pm1)--(Pm4);
 \draw[semithick] (P2)--(Pm3);
 \draw[semithick] (Pm2)--(P3);

\end{scope}

\begin{scope}[xshift=7.5cm, scale=0.7]

\def\R{4}
\def\r{1.35}

\draw[line width=1.1pt] (0,0) circle (\R);
\draw[line width=1.1pt] (0,0) circle (\r);

\coordinate (P1)  at (90:\R);
\coordinate (P2)  at (30:\R);
\coordinate (P3)  at (-30:\R);
\coordinate (Pm1) at (-90:\R);
\coordinate (Pm2) at (-150:\R);
\coordinate (Pm3) at (150:\R);

\coordinate (P4)  at (90:\r);
\coordinate (Pm5) at (0:\r);
\coordinate (Pm4) at (-90:\r);
\coordinate (P5)  at (180:\r);

\foreach \x in {P1,P2,P3,Pm1,Pm2,Pm3,P4,Pm5,Pm4,P5}
{
    \fill (\x) circle (4.5pt);
}

\node[yshift=12pt] at (P1) {$1$};
\node[xshift=12pt] at (P2) {$2$};
\node[xshift=12pt] at (P3) {$3$};

\node[yshift=-14pt] at (Pm1) {$-1$};
\node[xshift=-14pt] at (Pm2) {$-2$};
\node[xshift=-14pt] at (Pm3) {$-3$};

\node[yshift=10pt] at (P4) {$4$};
\node[xshift=16pt] at (Pm5) {$-5$};
\node[yshift=-12pt] at (Pm4) {$-4$};
\node[xshift=-14pt] at (P5) {$5$};

\draw[black, semithick] (P1) .. controls (2.1,1.8) and (2.1,-0.2) .. (Pm5);

\draw[semithick] (P2)--(P4);

\draw[black, semithick] (Pm1) .. controls (-1.5,-1.8) and (-1.7,-0.6) .. (P5);

\draw[semithick] (Pm2)--(Pm4);

\end{scope}

\end{tikzpicture}
\caption{Two pairs of $w_{3,2}$-crossing reflections in type $D_{5}$: $((1,4))$ and $((2,-3))$ (left), and $((1,-5))$ and $((2,4))$ (right).}
\label{fig:typeD-crossing-reflections}
\end{figure}

\Cref{thm:compatible-EL-ABD} shows that \(w\)-compatibility is a
necessary and sufficient condition for a total order on the reflections to
make the natural edge labeling an EL-labeling in the infinite families $A$, $B$, and $D$.
In types \(A\) and \(B\), every quasi-Coxeter element is a Coxeter element,
and the existence of $w$-compatible orders follows from
\Cref{thm:EL-labeling.for.Coxeter.elements}. It remains to treat the proper
quasi-Coxeter elements of type \(D\).

\begin{proposition}
    For every \(p,q\geq 1\), there exists a \(w_{p,q}\)-compatible total order on~\(\refl\).
    \label{prop:D-compatible-order}
\end{proposition}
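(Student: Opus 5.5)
We need to construct a total order on the reflections of $D_n$ such that every rank-two interval $[1,u]\subseteq[1,w_{p,q}]$ has a unique rising maximal chain. By \Cref{rmk:rank-two}, this amounts to the following condition. For each rank-two element $u\le w_{p,q}$, the reflections below $u$ form cycles under $r\mapsto ru$, and we need exactly one ascent (cyclically) along each such cycle. In particular, each cycle must be a single orbit.

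\textbf{Step 1: list the rank-two elements.} Using the annular description of \Cref{sec:annular}, the rank-two elements below $w_{p,q}$ are of the following kinds.
\begin{enumerate}[label=(\alph*)]
    \item Products of two commuting reflections (type $A_1\times A_1$): these have two maximal chains, $(a,b)$ and $(b,a)$, so exactly one of them is rising automatically. They impose no condition.
    \item Elements $((x,y,z))$ with three reflections (type $A_2$), where $x,y,z$ appear in the annular cyclic order. These may be outer, inner, or mixed.
    \item Zero-block elements of type $B_2$ on one circle, namely $[i,j]$, or the $D$-type zero-block consisting of a pair $[i][h]$ meeting both circles.
\end{enumerate}
In type $D$ the factors $[i]$ are not reflections. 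A zero-block element $[i][h]$, with $i$ outer and $h$ inner, is $((i,h))((i,-h))$, a commuting pair. So only the type-$A_2$ triangles impose constraints, together with the outer $[i,j]$ elements. The latter have reflections $((i,j))$ and $((i,-j))$ only in $D$, since $[i]\notin\refl$; I would double check this, but it means they too are of type $A_1\times A_1$.

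\textbf{Step 2: reduce to an order built from known ones.} I would try a concrete order adapted to the annulus. Order the $w_{p,q}$ labels using the type $A$ compatible order of \cite{AthanasiadisBradyWatt2005} in a linear ``unrolling''. Cut the annulus along a ray to make it a disk with boundary points in the linear order
\[
1<2<\dots<p<-1<\dots<-p \quad\text{interleaved with}\quad p+1<\dots<n<-(p+1)<\dots .
\]
Each reflection $((x,y))$ is identified with a chord $\{x,y\}$ (choosing the representative pair with smaller positions). The reflections are then ordered lexicographically by (left endpoint increasing, right endpoint decreasing), which is the standard rule making every triangle $(x<y<z)$ have the unique rising chain $((x,z)),((x,y))$-type.

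The condition to verify for an $A_2$ triangle with vertices in cyclic order $x,y,z$ is this: among the chains $(a_1,a_2)$, $(a_2,a_3)$, $(a_3,a_1)$, exactly one is an ascent. Equivalently, the three reflections are not cyclically monotone in the wrong sense. For triangles not crossing the cut, this is the classical type $A$ verification. For triangles that wrap around, the sign symmetry $((x,y))=((-x,-y))$ lets me choose representatives so the triangle again appears linearly.

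\textbf{Step 3 (main obstacle): mixed triangles.} The hardest part will be the mixed triangles involving connectivity reflections $((i,\pm h))$, where representative choice interacts with both circles winding in opposite directions. The counterclockwise inner orientation matters here. I would first try ordering all connectivity reflections by the position of their outer endpoint and then by the reversed position of their inner endpoint, and verify the unique-ascent property by a case analysis on which circle holds two of the three vertices. If this linear choice fails for some wrap-around triangle, the fallback is to check small cases $D_4,D_5$ by computer to guess the right order, then prove it by the same case split.
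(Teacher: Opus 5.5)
Your Step~1 reduction is correct in substance. In $D_n$ the only rank-two elements below $w_{p,q}$ that constrain the order are the three-cycles $((x,y,z))$, and each one requires its three reflections to appear in one of the three rotations of a prescribed cyclic order. There is one slip: for $i,j$ on the same circle, $[i,j]$ is an odd permutation and does not lie in $D_n$ at all, while $((i,j))((i,-j))=[i][j]$ lies below $w_{p,q}$ only when $i$ and $j$ are on different circles.

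The gap is that the proposition \emph{is} the assertion that all these local cyclic constraints can be satisfied simultaneously, and you never exhibit an order that satisfies them.
\begin{itemize}
\item Step~2 is not a definition: ``interleaved'' does not fix a linear order of the $2n$ points.
\item Its one concrete rule fails under the paper's right-to-left convention. The chains of $[1,((x,y,z))]$ are labelled $(((x,y)),((y,z)))$, $(((y,z)),((x,z)))$ and $(((x,z)),((x,y)))$. ``Right endpoint decreasing'' gives $((x,z))\prec((x,y))\prec((y,z))$, so besides your chain $(((x,z)),((x,y)))$ the chain $(((x,y)),((y,z)))$ is also rising. Within your lexicographic scheme you need right endpoint increasing, as in the first row of \Cref{tab:local-rules}.
\item A total order gives each reflection a single position. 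So you cannot re-choose representatives via $((x,y))=((-x,-y))$ triangle by triangle to make each wrap-around triangle look linear; those triangles have to be checked against one fixed choice.
\item Step~3, the triangles containing connectivity reflections, is the genuinely annular part. You leave it as an unresolved case analysis with a computer fallback.
\end{itemize}
None of this is a formality. By \Cref{thm:exceptional-compatible-exists}, eighteen exceptional classes admit no compatible order at all. By the remark following the proposition, no Dyer reflection order for the standard positive roots is compatible once $p,q\ge 2$. So you cannot expect a ``geometric'' order to work for free.

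What is missing is exactly what the paper provides. First, it fixes one admissible local order for each kind of three-cycle, sorted by sign pattern and by the positions (outer or inner) of $i<j<k$ (\Cref{tab:local-rules}). Second, it gives a single total order extending all of them:
\begin{itemize}
\item for $i=1,\dots,p$, the block $((i,i+1)),\dots,((i,n)),((i,-(i+1))),\dots,((i,-p))$, so each positive connectivity reflection $((i,h))$ sits with its outer endpoint;
\item then, for $h=p+1,\dots,n$ in turn: the inner reflections $((g,-h))$ with $p<g<h$, then the negative connectivity reflections $((1,-h)),\dots,((p,-h))$, then the inner reflections $((g,h))$ with $p<g<h$.
\end{itemize}
Thus the negative connectivity reflections are grouped by their \emph{inner} endpoint and interleaved with the inner reflections. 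With the table's rules this is unavoidable: the rows for $((i,-j,-k))$ and $((i,-k,j))$ in positions OII force $((j,-k))\prec((i,-k))\prec((j,k))$ for every outer $i$ and inner $j<k$. Your heuristic of sorting connectivity reflections by outer endpoint treats them differently. It might work with other local rules, but the proposition is not proved until you fix an order at this level of precision and check every triangle type against it.
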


\begin{proof}
    We claim that a feasible choice of orders in rank-two intervals is given in \Cref{tab:local-rules}.
    Indeed, the following total order agrees with all local orders:
    \begin{align*}
    &((1, 2)) \prec ((1, 3)) \prec 
    \dots \prec ((1, n))\prec 
     ((1, -2)) \prec \dots \prec ((1, -p)) \\
    &\prec ((2, 3)) \prec \dots \prec ((2, n)) 
    \prec((2, -3)) \prec \dots \prec ((2, -p)) \\
    &\prec \dots \prec ((p, p+1)) \prec \dots \prec ((p, n)) \prec((1,-(p+1))) \prec \dots \prec ((p, -(p+1))) \\
    &\prec ((p+1, -(p+2)))
    \prec ((1, -(p+2))) \prec \dots \prec ((p, -(p+2))) \\ 
    &\prec ((p+1, p+2))\prec ((p+1, -(p+3))) \prec ((p+2, -(p+3))) \\
    &\prec((1, -(p+3))) \prec \dots \prec ((p, -(p+3))) \\  
    &\prec ((p+1, p+3)) \prec ((p+2, p+3))\prec \dots \prec ((n - 1, n)). \qedhere
    \end{align*}
\end{proof}

\begin{table}[t]
    \centering
    \begin{tabular}{c|c|c}
        $u$ & positions of $i < j < k$ & local order  \\
        \hline
        $((i, j, k))$ & $OOO$, $OOI$, $OII$, $III$ & $((i, j)) \prec ((i, k)) \prec ((j, k))$ \\
        $((i, j, -k))$ & $OOO$, $OOI$, $III$ & $((i, j)) \prec ((i,-k)) \prec ((j,-k))$ \\
        $((i, k, -j))$ & $OOI$, $OII$ & $((i, k)) \prec ((i,-j)) \prec ((j,-k))$ \\
        $((i, -k, j))$ & $OII$ & $((i, j)) \prec ((j,-k)) \prec ((i,-k))$ \\
        $((i, -j, -k))$ & $OOO$, $OII$, $III$ & $((i,-j)) \prec ((i,-k)) \prec ((j,k))$ \\
        $((i, -k, -j))$ & $OOI$ & $((i, -j)) \prec ((j,k)) \prec ((i,-k))$ 
    \end{tabular}
    \caption{
        A choice of orders in irreducible rank-two intervals $[1, u]$ extending to a global order on $\refl$ in type $D_n$.
        In each row, we have $1 \leq i < j < k \leq n$; the second column records the possible positions of $i, j, k$ in the outer and inner circles $O=\{1,\ldots,p\}$ and $I=\{p+1,\ldots,n\}$.
    }
    \label{tab:local-rules}
\end{table}

This completes the proof of \Cref{thm:main-shellability} for the three infinite families.

\begin{remark}
    Unlike the case of Coxeter elements \cite{AthanasiadisBradyWatt2005} (for a suitable choice of positive roots), for proper quasi-Coxeter elements of type $D$, no $w_{p,q}$-compatible order is a reflection ordering (in the sense of Dyer \cite{dyer1993hecke}) with respect to the standard set of positive roots
    \[ \Phi_{D}^{+}= 
    \{e_{i} \pm e_{j} \mid 1 \le i<j \le n\}.
    \]
    Indeed, the reflection ordering condition, together with $w_{p,q}$-compatibility, forces the total order in every irreducible rank-two interval $[1, u]$.
    For example, for $w_{2,2} = [1,2][3,4]$ in $D_4$, we have:
    \begin{align*}
       u = ((1,-4,3)): &\qquad ((3,-4))\prec((1,-4))\prec((1,3)) \\
       u = ((1,-4,-2)): &\qquad ((1,-4))\prec((1,-2))\prec((2,4)) \\
       u = ((1,3,-2)): &\qquad ((1,3))\prec((1,-2))\prec((2,-3)) \\
       u = ((2,4,-3)): &\qquad ((2,4))\prec((2,-3))\prec((3,-4)),
    \end{align*}
    implying the contradiction
    \[ ((3,-4))\prec((1,-4))\prec((1,-2))\prec((2,-3))\prec((3,-4)). \]
    The same reasoning applies to the interval below $[1,2][p+1,p+2]$ in any interval $\interpq$, with $p,q \geq 2$.
\end{remark}

\begin{remark}\label{rem:thomas.proof}
  The following result, due to Thomas (private communication), reduces the first part of \Cref{thm:main-shellability-refined} to a finite check.
  Let $w$ be a quasi-Coxeter element of a finite Coxeter group $W$ of rank at least $2$. We denote by $m$ the maximum rank among all minimal common upper bounds of pairs of distinct atoms in the interval $\inter$.
  Suppose that, for every subinterval $[1,w'] \subseteq \inter$ with $\operatorname{rank}(w') \le m$, all $w'$-compatible orders yield EL-labelings of $[1,w']$.
  Then every $w$-compatible order yields an EL-labeling of $\inter$.
\end{remark}

\begin{proof}
    Let $\prec$ be a $w$-compatible order. We show, by induction on $\operatorname{rank}(x)$, that the natural edge labeling of $[1,x]$ is an EL-labeling for every $x \in \inter$; this holds for $\operatorname{rank}(x) \leq m$ by assumption, since $\prec$ is also $x$-compatible. By \Cref{rmk:interval.isomorphism}, every proper subinterval $[a,b]$ of $[1,x]$ is label-preservingly isomorphic to $[1,a^{-1}b]$, where $a^{-1}b \le b \le x$, so the induction hypothesis applies. It remains to show that $[1,x]$ has a unique rising maximal chain, and that it is lexicographically smallest.
    We know by \Cref{rmk:rising.chain} that the lexicographically smallest chain $c$ of $[1,x]$ is rising. Let $s$ be its first reflection, which is the least reflection below $x$.
    So it suffices to show that $[1,x]$ has no other rising maximal chain.
    We prove this by contradiction: suppose that there exists another rising chain $c'$, which necessarily starts from some other reflection, say $t$.  

Choose any minimal common upper bound $u$ of $s$ and $t$ in $[1,x]$. It is also minimal in $\inter$, so its rank is at most $m$. Consider the only rising chain $d$ of the subinterval $[t,u]$.
By the assumption that the result holds for intervals of rank up to $m$, the chain obtained by adding $t$ before $d$ is not rising in the interval $[1,u]$, since the only rising chain in that interval begins with the reflection $s$. Thus the first reflection $r'$ of $d$ precedes $t$.

Now consider the only rising chain $d'$ in $[t,x]$. By induction, it is unique and lexicographically smallest.
Extending $d$ to a maximal chain of $[t,x]$ and using lexicographic minimality shows that the first reflection of $d'$ is at most $r'$ with respect to $\prec$. In particular, the first reflection of $d'$ precedes $t$. The only way to possibly obtain a rising chain of the entire interval $[1,x]$ starting with $t$ is to have $t$ followed by $d'$, but we have just shown that this chain is not rising.
So the only rising chain in $[1,x]$ is $c$, as desired.
\end{proof}

In the infinite families $A_{n}$, $B_{n}$, and $D_{n}$, the finite check required by \Cref{rem:thomas.proof} is equivalent to the explicit one carried out in the proof of \Cref{thm:compatible-EL-ABD}.
In the exceptional types, the full statement already amounts to a finite check, as there are only finitely many intervals to consider;
we treat the exceptional types in \Cref{sec:exceptional-cases}.

\section{Zeta polynomials in type \texorpdfstring{$D$}{D}}
\label{sec:zeta.polynomial}

In this section, our objective is to determine the zeta polynomials of the intervals $\interpq_{D}$, where the element $w_{p,q}$ is defined as in Section \ref{sec:preliminaries}. Throughout this section, let $p$ and $q$ be two positive integers and $n=p+q$. We define the connectivity of an element of $\annularncB(p,q)$.

\begin{definition}\label{def:connectivity}
 For $w \in \NC^{B}(p,q)$, the connectivity of $w$ is the number
\[
c(w) :=
\frac{1}{2}
\left(
\begin{array}{l}
\# \text{ of nonzero blocks } A \text{ of } w  
\text{ such that} \\ 
A \text{ intersects both sets }
\{\pm 1,\ldots,\pm p\} \\
\text{ and }
\{\pm (p+1),\ldots,\pm (p+q)\}
\end{array}
\right).
\]   
\end{definition}

Recall from \Cref{sec:annular} that the interval $\interpq_{D}$ is isomorphic (as a poset) to the annular noncrossing partition poset $\annularncD(p,q)$ \cite[Corollary 7.2]{NICA.annular.noncrossing}.
We compute the zeta polynomial of $\annularncD(p,q)$ starting from the known formula for the zeta polynomial of $\annularncB(p,q)$.

\begin{theorem}[{\cite[Theorem 5.3]{GouldenNicaEnumerativeProperties}}]\label{thm:zeta.polynomial.B}
    Let $p,q$ be positive integers.
    The zeta polynomial of $\NC^{B}(p,q)$ is given by the formula
    \begin{equation}\label{eq:zeta.annular.B}
    Z(\NC^{B}(p,q),m)
    =
    \binom{mp}{p}
    \binom{mq}{q}
    +
    \sum_{c=1}^{p}
    2c
    \binom{mp}{p-c}
    \binom{mq}{q+c}.
    \end{equation}
\end{theorem}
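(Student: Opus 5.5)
The plan is to count multichains through factorizations. A multichain $\pi_1\le\cdots\le\pi_{m-1}$ in $\NC^{B}(p,q)\cong\interpq_{B}$ corresponds, via \Cref{rmk:interval.isomorphism} (taking successive quotients $\pi_{i-1}^{-1}\pi_i$ with $\pi_0=1$ and $\pi_m=\wpq$), to a minimal factorization
\[
\wpq=u_1u_2\cdots u_m,\qquad \sum_{i}\length(u_i)=n,
\]
in $B_n$. So $Z(\NC^{B}(p,q),m)$ equals the number of such $m$-factorizations. Each factor $u_i$ lies in $\interpq_B$, so it is an annular noncrossing partition. I would sort the factorizations by a single statistic: the connectivity $c=c(\pi_{m-1})$ of the top element of the chain, as in \Cref{def:connectivity}. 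Equivalently, $c$ is half the number of blocks joining the two circles that appear in the coarsest factor arrangement. I would then show that the factorizations with a given $c$ are counted by the $c$-th term of \eqref{eq:zeta.annular.B}.

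\textbf{Case $c=0$.} No block of any $\pi_i$ meets both circles, except possibly through the single zero-block. I would argue that the chain then splits as a pair consisting of a type-$B$ noncrossing multichain on the outer circle (below $[1,\dots,p]$) and one on the inner circle (below $[p+1,\dots,n]$). The zero-block bookkeeping is the delicate point: the one allowed zero-block may merge the zero-blocks of the two circles, and I would check that this gluing is forced and unique. Granting this, the known type-$B$ zeta polynomial $\binom{mp}{p}$ (the classical $m$-divisible noncrossing count of Reiner and Armstrong) gives the term $\binom{mp}{p}\binom{mq}{q}$.

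\textbf{Case $c\ge1$.} Here the top element has $2c$ nonzero blocks $\pm A_1,\dots,\pm A_c$ crossing the annulus. These blocks cut the annulus into $2c$ regions, each shaped like a disk. I would encode the multichain as a cyclic word, or equivalently a lattice-path or $m$-ary tree structure, on $mp$ outer and $mq$ inner positions. The standard type-$A/B$ encoding sends multichains to choices of $p$ "block-openers" among $mp$ slots. The crossing blocks transfer $c$ of the outer openers to the inner side, which turns the naive count into $\binom{mp}{p-c}\binom{mq}{q+c}$. The factor $2c$ comes from the rotational freedom: the crossing pattern is determined up to choosing which of the $2c$ crossing blocks is marked first. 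I would make this rigorous with a cycle-lemma argument, in the spirit of Goulden--Nica's treatment of annular permutations: each orbit under rotation has the same size, and each rotation class contains exactly $2c$ valid markings.

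\textbf{Main obstacle.} The hardest step is the bijection in the case $c\ge1$, that is, proving that the shifted binomial product, multiplied by $2c$, counts exactly the chains whose top connectivity is $c$. My first attempt would be a generating-function verification rather than a direct bijection. I would write the multichains with connectivity $c$ as a product of $2c$ disk-type pieces, each a type-$A$ multichain generating function (Fuss--Catalan), use Lagrange inversion to extract coefficients, and match the result with $2c\binom{mp}{p-c}\binom{mq}{q+c}$. As a sanity check, $m=2$ must recover the known cardinality $|\NC^{B}(p,q)|$.
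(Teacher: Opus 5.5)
Your proposal has a genuine gap: sorting by $c=c(\pi_{m-1})$ is the wrong statistic, so the term-by-term plan fails. Connectivity is not monotone in $\NC^{B}(p,q)$. The element $w_{p,q}$ has connectivity $0$, since its only block is a zero-block meeting both circles. Yet it lies above the reflection $((1,p+1))$, which has connectivity $1$. So your claim in Case $c=0$ that no nonzero block of any $\pi_i$ meets both circles is false (take $\pi_{m-1}=w_{p,q}$), and the splitting into an outer and an inner type-$B$ multichain breaks down. Concretely, take $p=q=1$ and $m=3$. The interval $[1,w_{1,1}]_B$ has atoms $[1],[2],((1,2)),((1,-2))$, the last two of connectivity $1$. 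There are $4$ multichains $x_1\le x_2$ with $c(x_2)=1$, whereas the $c=1$ term is $2\binom{3}{0}\binom{3}{2}=6$; for $c=0$ you would get $11$ instead of $9$.

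Worse, for $m\ge 3$ the summation index is not the connectivity of \emph{any} element of the multichain. Each crossing block contains both an inner and an outer point, so $c(\pi)\le\min(p,q)$ for every $\pi$. But for $p=2$, $q=1$, $m=3$ the $c=2$ term is $4\binom{6}{0}\binom{3}{3}=4$. Your $m=2$ sanity check cannot detect this, because then a multichain is a single element.

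The only split by connectivity of elements that matches the formula is the coarse one recalled in the paper right after the statement. The first term counts the multichains all of whose elements have connectivity $0$. The whole sum counts those containing at least one element of positive connectivity. With that split your Case $c=0$ needs no gluing argument. The zero-connectivity elements are exactly the products $uv$ with $u\le w_p$ and $v\le w_q$, and they form a copy of $[1,w_p]\times[1,w_q]$, which gives $\binom{mp}{p}\binom{mq}{q}$.

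All the content of the theorem then lies in showing that the multichains through a positive-connectivity element number $\sum_{c\ge 1}2c\binom{mp}{p-c}\binom{mq}{q+c}$. By \Cref{cor:zeta.poly.B.closed}, this equals $\frac{2(m-1)pq}{m(p+q)}\binom{mp}{p}\binom{mq}{q}$. For this your proposal offers no argument. The transfer of $c$ openers, the factor $2c$ from rotations, and the equal-orbit claim from the cycle lemma are asserted rather than derived, and they were designed for the wrong statistic. The Lagrange-inversion route is only named.

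Moreover, your disk decomposition only describes the part of a multichain lying below one element $x_i$ of positive connectivity. The part above $x_i$ lives in $[x_i,w_{p,q}]_B\cong[1,K_1^{w_{p,q}}(x_i)]_B$, where connectivity can drop back to $0$ (for instance at $w_{p,q}$). A multichain can also contain several positive-connectivity elements of different connectivities. For $p=q=2$, every element of connectivity $2$ is a product of two commuting connecting reflections, each of connectivity $1$. So you need a canonical choice of $x_i$ and control of both sides.

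Since the index $c$ can exceed $\min(p,q)$, a refined bijective proof must pass through a different combinatorial model in which $c$ ranges up to $p$, and you would have to construct its bijection with multichains. That construction is the missing step. The paper itself does not reprove the statement; it quotes it from Goulden--Nica.
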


The formula in Theorem \ref{thm:zeta.polynomial.B} is not closed, as it contains a sum over the connectivity parameter $c$.
It is explained in \cite{GouldenNicaEnumerativeProperties} that the first summand of \eqref{eq:zeta.annular.B} counts the number of multichains
$$x_{1} \le \dots \le x_{m-1}$$
of length $m-2$ such that all elements $x_{i}$ of the multichain have connectivity $c=0$. 
We refer to elements of this type as zero-connectivity elements.
Let $w_{p} = [1,\dots,p]$ and $w_{q} = [p+1,\dots,n]$, so that $\wpq = w_{p} w_{q}$.
Since $w_p$ and $w_q$ do not lie in $D_n$, intervals below them are always taken in $B_n$, and we omit the subscript.
We observe that a zero-connectivity element $w$ in the interval $\interpq_{B}$ is the product $w=u \cdot v$ of an element $u \in [1,\wpp]$ and an element $v \in [1,w_{q}]$.
It follows from \cite[Proposition 7]{REINER1997195} that the zeta polynomial of the type $B$ interval $[1,\wpp]$ is
\[
    Z([1,\wpp],m)
    =
    \binom{mp}{p}. \]

The second summand of \eqref{eq:zeta.annular.B} counts the number of multichains such that there is an element $x_{i}$ in the multichain with
$$c(x_{i})>0.$$
In this case, we call $x_{i}$ a positive-connectivity element.
If we set $m=2$, the summand corresponding to $c=k$ counts the number of elements with connectivity $k$.
We first derive an equivalent closed formula for the zeta polynomial $Z(\annularncB(p,q),m)$.

\begin{corollary}\label{cor:zeta.poly.B.closed}
    Let $p,q$ be positive integers.
    The zeta polynomial of $\NC^{B}(p,q)$ is given by the formula
    \[
    Z(\NC^{B}(p,q),m)
    =
    \left(1+ \frac{2(m-1)pq}{m(p+q)}
    \right) 
    \binom{mp}{p}
    \binom{mq}{q}.
    \]
\end{corollary}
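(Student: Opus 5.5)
The plan is to start from \Cref{thm:zeta.polynomial.B} and show that the sum over the connectivity parameter telescopes. Set $A = mp$ and $B = mq$, so that $pB = qA = mpq$. It then suffices to prove the identity
\[
\sum_{c=1}^{p} 2c \binom{A}{p-c}\binom{B}{q+c} = \frac{2(m-1)pq}{m(p+q)}\binom{A}{p}\binom{B}{q}.
\]
Both sides are polynomials in $m$, so it is enough to verify it for positive integers $m$, where all the binomial manipulations below are legitimate.

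\textbf{Step 1: split the weight $c$.} Write
\[
c = \frac{p(q+c) - q(p-c)}{p+q}.
\]
Then absorb each linear factor into a binomial coefficient using $(q+c)\binom{B}{q+c} = B\binom{B-1}{q+c-1}$ and $(p-c)\binom{A}{p-c} = A\binom{A-1}{p-c-1}$. Since $pB = qA = mpq$, each summand becomes
\[
\frac{2mpq}{p+q}\left[\binom{A}{p-c}\binom{B-1}{q+c-1} - \binom{A-1}{p-c-1}\binom{B}{q+c}\right].
\]

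\textbf{Step 2: telescope with Pascal's rule.} Define $G(c) = \binom{A-1}{p-c}\binom{B-1}{q+c-1}$ and $H(c) = \binom{A-1}{p-c-1}\binom{B-1}{q+c-1}$. Applying Pascal's rule to $\binom{A}{p-c}$ and to $\binom{B}{q+c}$ gives
\[
\binom{A}{p-c}\binom{B-1}{q+c-1} = G(c) + H(c), \qquad \binom{A-1}{p-c-1}\binom{B}{q+c} = G(c+1) + H(c).
\]
The bracket in Step 1 is therefore exactly $G(c) - G(c+1)$. Summing over $c = 1, \dots, p$ yields
\[
\frac{2mpq}{p+q}\bigl(G(1) - G(p+1)\bigr),
\]
and $G(p+1) = 0$ because $\binom{A-1}{-1} = 0$.

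\textbf{Step 3: evaluate $G(1)$.} We have
\[
G(1) = \binom{mp-1}{p-1}\binom{mq-1}{q} = \frac{1}{m}\binom{mp}{p}\cdot\frac{m-1}{m}\binom{mq}{q}.
\]
Hence the sum equals $\frac{2(m-1)pq}{m(p+q)}\binom{mp}{p}\binom{mq}{q}$. Adding the $c=0$ term $\binom{mp}{p}\binom{mq}{q}$ from \eqref{eq:zeta.annular.B} gives the stated closed formula.

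The main obstacle is finding the telescoping decomposition. The key choice is the splitting of $c$ in Step 1, which is made so that the coefficients $pB$ and $qA$ coincide. Once that is done, Pascal's rule produces the cancellation of the $H(c)$ terms. The remaining steps are routine bookkeeping with binomial coefficients, checking the boundary terms at $c = 1$ and $c = p+1$.
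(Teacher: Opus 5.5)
Your proof is correct and is the same telescoping argument as the paper's. Your quantity $\frac{mpq}{p+q}G(k)=\frac{mpq}{p+q}\binom{mp-1}{p-k}\binom{mq-1}{q+k-1}$ equals the paper's closed form $\frac{(mq-q-k+1)(p-k+1)}{m(p+q)}\binom{mp}{p-k+1}\binom{mq}{q+k-1}$ for the tail sum $\sum_{c=k}^{p} c\binom{mp}{p-c}\binom{mq}{q+c}$, which the paper proves by downward induction on $k$ starting at $k=p+1$. The only difference is presentation: the paper states this antidifference and checks the induction step by direct algebra, whereas your splitting $c=\frac{p(q+c)-q(p-c)}{p+q}$ together with Pascal's rule derives it and shows where the formula comes from.
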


\begin{proof}
    By \Cref{thm:zeta.polynomial.B}, it is sufficient to prove that
    \[
    \sum_{c=1}^{p}
    c
    \binom{mp}{p-c}
    \binom{mq}{q+c}
    =
    \frac{(m-1)pq}{m(p+q)}
    \binom{mp}{p}
    \binom{mq}{q}.
    \]
    We show by induction that, for $1 \leq k \leq p+1$, the partial sum on the left-hand side admits the following closed formula:
    \[
    \sum_{c=k}^{p}
    c
    \binom{mp}{p-c}
    \binom{mq}{q+c}
    =
    \frac{(mq - q - k + 1)(p - k + 1)}{m(p+q)}
    \binom{mp}{p-k+1}
    \binom{mq}{q+k-1}.
    \]
    Our induction starts from the base case $k=p+1$, which is trivial since both sides are zero; in the induction step, we assume that the formula above holds for $k+1$ and we prove it for $k$:
    \begin{align*}
        &\sum_{c=k}^{p}
        c
        \binom{mp}{p-c}
        \binom{mq}{q+c} =
        k \binom{mp}{p-k}\!
        \binom{mq}{q+k} +
        \sum_{c=k+1}^{p}
        c
        \binom{mp}{p-c}
        \binom{mq}{q+c} \\
        &\quad =  \left( k + \frac{(mq - q - k)(p - k)}{m(p+q)} \right) \binom{mp}{p-k}
        \binom{mq}{q+k} \\
        &\quad =  \frac{(mp - p + k)(q + k)}{m(p+q)}  \binom{mp}{p-k}
        \binom{mq}{q+k} \\
        &\quad = \frac{(mq - q - k + 1)(p - k + 1)}{m(p+q)}
        \binom{mp}{p-k+1}
        \binom{mq}{q+k-1}.
    \end{align*}
    The desired result follows by setting $k=1$.
\end{proof}

We recall that $\annularncDpq$ is a subposet of $\annularncBpq$.
In order to prove the formula in Theorem \ref{thm:zeta.polynomial.D} for the zeta polynomial of $\annularncDpq$,  we investigate which multichains of $\annularncBpq$ are also multichains of $\annularncDpq$, starting with multichains that contain only zero-connectivity elements.

\begin{figure}[t]
\centering

\begin{tikzpicture}[scale=1.2]

\draw[line width=1.1pt] (0,0) circle (2);

\foreach \i/\ang in {
1/90,
1'/67.5,
2/45,
2'/22.5,
3/0,
3'/-22.5,
4/-45,
4'/-67.5,
-1/-90,
-1'/-112.5,
-2/-135,
-2'/-157.5,
-3/180,
-3'/157.5,
-4/135,
-4'/112.5
}{
    \coordinate (P\i) at (\ang:2);
    \node at (\ang:2.4) {$\i$};
}

\fill[gray!30] (P1) -- (P2) -- (P3) -- cycle;
\fill[gray!30] (P-1) -- (P-2) -- (P-3) -- cycle;
\fill[gray!12] (P3') -- (P4') -- (P-3') -- (P-4') -- cycle;

\draw[semithick] (P1)--(P2);
\draw[semithick] (P2)--(P3);
\draw[semithick] (P3)--(P1);

\draw[semithick] (P-1)--(P-2);
\draw[semithick] (P-2)--(P-3);
\draw[semithick] (P-3)--(P-1);

\draw[semithick, dashed] (P3')--(P4');
\draw[semithick, dashed] (P-3')--(P4');
\draw[semithick, dashed] (P-3')--(P-4');
\draw[semithick, dashed] (P3')--(P-4');

\foreach \i in {1,1',2,2',3,3',4,4',-1,-1',-2,-2',-3,-3',-4,-4'}
    \fill (P\i) circle (2.1pt);

\end{tikzpicture}

\caption{The Kreweras complement of $((1,2,3))$ is $[3,4]$ (dashed).}
\label{fig:Kreweras.complement}

\end{figure}

We investigate the structure of the type $B$ interval
$[1, \wpp]$; everything we prove also holds for $[1,w_{q}]$ after relabelling.
First we recall the definition of the Kreweras complement.

\begin{definition}
    Let $W$ be a finite Coxeter group. For all pairs of elements $u,v \in W$, define the map
    $\kre: W \to W$ by setting 
    $\kre(w) \coloneqq u w^{-1} v.$
\end{definition}

If $u \ler v$, then $\kre$ is an anti-automorphism of the interval $[u, v]$, as proved in \cite[Lemma 2.5.4]{armstrong2007generalizednoncrossingpartitionscombinatorics}.
There is an intuitive way to visualize this anti-automorphism for the map $K_{1}^{\wpp}:[1,\wpp] \to [1,\wpp]$, described in \cite[Section 4.2]{armstrong2007generalizednoncrossingpartitionscombinatorics}.
We fix an element $w \in [1,\wpp]$ and consider its classical noncrossing partition structure. Then we add on the circle the points $1',\dots, p',-1',\dots,-p'$, where the point $i'$ is obtained with a clockwise rotation of $\frac{\pi}{2p}$ radians of the point $i$, for every $i \in \{1,\dots,p,-1,\dots,-p\}$. Then $K_{1}^{\wpp}(w)$ corresponds to the coarsest partition of the set $\{1',\dots, p',-1',\dots,-p'\}$ such that no block of $w$ crosses a block of $K_{1}^{\wpp}(w)$.
For example, if $p=4$ and $w=((1,2,3))$, we see in Figure \ref{fig:Kreweras.complement} that $K_{1}^{\wpp}(w)=[3,4]$.
Indeed,
$$w_{4} = [1,2,3,4] = ((1,2,3))[3,4].$$

\begin{remark}\label{rem:kreweras.complement}
    Recall from \Cref{sec:annular} that the elements
    $((i_1,\dots,i_r))$ are even and $[i_1,\dots,i_r]$ are odd.
    Since every element of $[1,\wpp]$ has at most one cycle
    of the latter form, corresponding to its zero-block,
    it is odd if and only if it has a zero-block.
    The element $\wpp$ is odd, so $w$ and
    $K_1^{\wpp}(w)=w^{-1}\wpp$ have opposite parity.
    Hence $K_1^{\wpp}$ is an anti-automorphism which swaps
    the subposets $[1,\wpp]_{\NZ}$ and
    $[1,\wpp]_{\ZZ}$, consisting respectively
    of the elements without and with a zero-block.
    The same argument applies to the map $K_1^b$ on $[1,b]$
    whenever $b \in [1,\wpp]$ has a zero-block, since $b$
    is odd; it also applies to $K_a^{\wpp}$ on $[a,\wpp]$
    whenever $a \in [1,\wpp]$ has no zero-blocks, since $a$ is even.
\end{remark}

Let $m \ge 2$ be an integer and $0 \leq i \leq m-1$.
We denote by $\MC_{p}(m,i)$ the set of multichains 
$$x_{1} \ler \dots \ler x_{m-1}$$
of length $m-2$ in 
$[1,\wpp]$, such that $x_j$ has no zero-blocks for $j \leq i$ and has a zero-block for $j > i$.

\begin{lemma}\label{lem:number.multichains.B}
    The sets $\MC_{p}(m,i)$, for $0 \le i \le m-1$, have the same number of elements:
    \begin{equation}\label{eq:formula.mp}
      |\MC_{p}(m,i)| = \frac{1}{m} \binom{mp}{p}.  
    \end{equation}    
\end{lemma}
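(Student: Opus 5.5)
The plan has two parts: show that all the sets $\MC_p(m,i)$ have the same size, then sum over $i$.

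First, the multichains $x_1\le\dots\le x_{m-1}$ in $[1,\wpp]$ are partitioned by the index $i$ at which elements stop lacking a zero-block. This index is well defined: if $x\le y$ and $x$ has a zero-block, then $y$ has one too, because blocks of $y$ are unions of blocks of $x$ and the union containing the zero-block is sign-invariant. So every multichain lies in exactly one $\MC_p(m,i)$ with $0\le i\le m-1$. The total count is $Z([1,\wpp],m)=\binom{mp}{p}$ by \cite[Proposition 7]{REINER1997195}. Hence it suffices to show that $|\MC_p(m,i)|$ does not depend on $i$, and then each equals $\frac1m\binom{mp}{p}$.

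To prove independence of $i$, I would build a bijection $\MC_p(m,i)\to\MC_p(m,i+1)$, or more symmetrically a cyclic action of order $m$ that shifts $i$ by one. Standard device: encode a multichain as a factorization via the "delta sequence". Put $y_0=x_1$, $y_j=x_j^{-1}x_{j+1}$, and $y_{m-1}=x_{m-1}^{-1}\wpp$. This gives factorizations $\wpp=y_0y_1\cdots y_{m-1}$ with $\sum_j \length(y_j)=p$. Multichains correspond bijectively to such length-additive $m$-tuples. The cyclic rotation
\[
(y_0,\dots,y_{m-1})\mapsto (\wpp y_{m-1}\wpp^{-1}, y_0,\dots,y_{m-2})
\]
preserves this set, since conjugation by $\wpp$ is an automorphism of $[1,\wpp]$. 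It generalizes the Kreweras complement, and $m$ rotations return to the start.

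Next I track parity. By \Cref{rem:kreweras.complement}, $x_j$ has a zero-block if and only if $x_j=y_0\cdots y_{j-1}$ is odd. Since $\wpp$ is odd, exactly one $y_j$ is odd. The index $i$ is then determined: $i$ equals the position of the odd factor, with $x_j$ even precisely for $j\le i$. Conjugation preserves parity, so rotation moves the odd factor from position $k$ to $k+1 \pmod m$. Rotation is therefore a bijection $\MC_p(m,i)\to\MC_p(m,i+1)$, with indices modulo $m$, and all $m$ sets have equal size.

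The main obstacle is the bookkeeping that matches "$x_j$ has no zero-block for $j\le i$" with "the odd factor sits at position $i$", including the edge cases $i=0$ (where $x_1$ is already odd) and $i=m-1$ (where the odd factor is the last one). I also need the claim that every element of $[1,\wpp]$ has at most one $[\dots]$ cycle, so that odd is equivalent to having a zero-block; this is recorded in \Cref{rem:kreweras.complement}.
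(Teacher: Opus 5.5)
Your argument is correct, but it takes a different route from the paper.

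\textbf{The paper's route.} The paper does not use delta sequences. It first applies the Kreweras complement $K_1^{\wpp}$, which by \Cref{rem:kreweras.complement} is an anti-automorphism of $[1,\wpp]$ exchanging the elements with and without a zero-block; this gives $|\MC_p(m,0)|=|\MC_p(m,m-1)|$. Then, for $0<i<m-1$, it fixes the initial segment $x_1\le\dots\le x_i$ and applies $K_{x_i}^{\wpp}$ on $[x_i,\wpp]$. This matches the completions with a zero-block against those without, so $|\MC_p(m,i)|=|\MC_p(m,m-1)|$. The final summation against Reiner's formula is the same as yours.

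\textbf{Comparison.} Your rotation of delta sequences gives a single uniform bijection $\MC_p(m,i)\to\MC_p(m,i+1)$, with indices mod $m$. This makes the cyclic symmetry explicit. You also prove that the sets $\MC_p(m,i)$ partition all multichains, which the paper only asserts. The paper's version has the advantage of reusing exactly the tool already set up in \Cref{rem:kreweras.complement}, with no translation between multichains and factorizations.

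\textbf{Two details to fix.} Neither affects the conclusion.
\begin{enumerate}[label=(\roman*)]
\item Doing $m$ rotations gives conjugation by $\wpp$, not the identity, so you do not get a cyclic action of order $m$. You only need bijectivity, and the inverse $(y_0,\dots,y_{m-1})\mapsto(y_1,\dots,y_{m-1},\wpp^{-1}y_0\wpp)$ is explicit.
\item The oddness of $\wpp$ alone only shows that an odd number of the $y_j$ are odd. To get exactly one, set $x_0=1$ and $x_m=\wpp$ and use the monotonicity of zero-blocks you proved at the start. It forces the parity along $x_0\le x_1\le\dots\le x_m$ to switch from even to odd exactly once, at the step $x_{i+1}=x_iy_i$.
\end{enumerate}
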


\begin{proof}
      We observe that $|\MC_{p}(m,0)|=|\MC_{p}(m,m-1)|$, since we know by Remark \ref{rem:kreweras.complement} that
      $[1,\wpp]_{\NZ}$ and $[1,\wpp]_{\ZZ}$ are anti-isomorphic posets. This means that $K^{\wpp}_{1}$ maps multichains of the first poset bijectively to multichains of the second, reversing their order.
      We extend this result to prove that for every $0 < i < m-1$, we get 
      $|\MC_{p}(m,i)|=|\MC_{p}(m,m-1)|$. In particular, we prove that for every $x \in [1,\wpp]_{\NZ}$, the number of multichains having $x$ as the $i$-th element is the same in 
      $\MC_{p}(m,i)$ and $\MC_{p}(m,m-1)$.
      Let $\mathcal{C} \colon x_{1} \ler \dots \ler x_{i}$ be a multichain of length $i-1$ in $[1,\wpp]_{\NZ}$. 
      Then we consider the interval $[x_{i},\wpp]$.
      Using Remark \ref{rem:kreweras.complement} again, there are as many multichains of length $m-2-i$ in
      $[x_{i},\wpp]_{\NZ}$ as in $[x_{i},\wpp]_{\ZZ}$.
      Hence there are the same number of ways to complete the multichain $\mathcal{C}$ to a multichain in $\MC_{p}(m,i)$ and $\MC_{p}(m,m-1)$.
      Since the $m$ sets $\MC_{p}(m,i)$ partition all multichains of $[1,w_p]$, the equation \eqref{eq:formula.mp} follows from the known formula for the zeta polynomial of type $B$ intervals \cite[Proposition 7]{REINER1997195}. 
\end{proof}

For every natural number $m \geq 2$, we denote by $Z^{0}(\NC^{B}(p,q),m)$ the number of multichains of $\annularncBpq$ of length $m-2$ such that all elements of the multichain have connectivity $c=0$.
In the same fashion, we denote by $Z^{0}(\NC^{D}(p,q),m)$ the number of multichains of $\annularncDpq$ of length $m-2$ such that all elements have connectivity $c=0$. The following lemma provides a formula for $Z^{0}(\NC^{D}(p,q),m)$.

\begin{lemma}\label{lem:zero.connectivity.multichains}
  Let $p$ and $q$ be positive integers. Then 
  $$Z^{0}(\NC^{D}(p,q),m) = \frac{1}{m} Z^{0}(\NC^{B}(p,q),m)=
  \frac{1}{m}
  \binom{mp}{p}
  \binom{mq}{q}.$$
\end{lemma}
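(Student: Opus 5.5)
The plan is to reduce zero-connectivity multichains to pairs of multichains in the two one-circle intervals $[1,\wpp]$ and $[1,w_q]$, and then to count how many of these pairs survive the type $D$ condition.

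\textbf{Step 1: the type $B$ count.} A zero-connectivity element $x \in \annularncBpq$ factors uniquely as $x = u\cdot v$ with $u\in[1,\wpp]$ and $v\in[1,w_q]$. Moreover, $x\le x'$ holds if and only if $u\le u'$ and $v\le v'$. Hence zero-connectivity multichains of $\annularncBpq$ correspond bijectively to pairs of multichains $(u_1\le\dots\le u_{m-1})$ in $[1,\wpp]$ and $(v_1\le\dots\le v_{m-1})$ in $[1,w_q]$. By \cite[Proposition 7]{REINER1997195}, the number of such pairs is $\binom{mp}{p}\binom{mq}{q}$, which gives the second equality.

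The one thing to check here is that the product $[1,\wpp]\times[1,w_q]$ embeds as the zero-connectivity subposet. The only subtlety is a zero-block of $u$ together with a zero-block of $v$. These would merge into a single zero-block of $x$, which in the annular picture is exactly the zero-block meeting both circles. By \cite[Lemma 4.8]{NICA.annular.noncrossing} it comes from two odd cycles, one on each circle, so the factorization is still recoverable, and the order is componentwise because reflection length is additive.

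\textbf{Step 2: membership in $D$.} A zero-connectivity element $x=uv$ lies in $\annularncDpq$ exactly when it is even. By \Cref{rem:kreweras.complement}, this means that either both $u$ and $v$ have a zero-block, or neither does. Since having a zero-block is an upward-closed property, each multichain in $[1,\wpp]$ lies in a unique $\MC_p(m,i)$, and each multichain in $[1,w_q]$ lies in a unique $\MC_q(m,j)$. The pair gives a multichain of $\annularncDpq$ if and only if, for every index $k$, $u_k$ and $v_k$ agree on having a zero-block. This happens precisely when $i=j$.

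\textbf{Step 3: the count.} By Step 2, \Cref{lem:number.multichains.B} gives
\[
Z^0(\annularncDpq,m)=\sum_{i=0}^{m-1}|\MC_p(m,i)|\,|\MC_q(m,i)| = m\cdot\frac1m\binom{mp}{p}\cdot\frac1m\binom{mq}{q},
\]
which is the claimed value.

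The main obstacle is Step 1's identification, specifically that merging two zero-blocks into one yields a bijection compatible with the order on both the $B$ and the $D$ side. I would argue this by working with the permutations directly: the order is the absolute order, $u$ and $v$ commute and act on disjoint supports, and $\length(uv)=\length(u)+\length(v)$.
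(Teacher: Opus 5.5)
Your proposal is correct and follows essentially the same route as the paper. Like the paper, you factor each zero-connectivity element as $uv$ with $u\in[1,w_p]$ and $v\in[1,w_q]$, observe that $uv$ lies in type $D$ exactly when $u$ and $v$ both have or both lack a zero-block, pair $\MC_p(m,i)$ with $\MC_q(m,i)$, and sum using \Cref{lem:number.multichains.B}. The one difference is in the type $B$ count $\binom{mp}{p}\binom{mq}{q}$: you derive it directly from the product structure and Reiner's formula, whereas the paper takes it from Goulden and Nica's interpretation of the first summand of their zeta polynomial formula.
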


\begin{proof}
      Every zero-connectivity element $w$ in the interval $\interpq_{B}$ is the product $w=u \cdot v$ of an element $u \in [1,\wpp]$ and an element $v \in [1,w_{q}]$.
      By the characterization of $\annularncDpq$ recalled in \Cref{sec:annular}, such an element $w$ lies in $\interpq_{D}$ if and only if either both $u$ and $v$ have no zero-blocks, or both have a zero-block.
      Given a multichain $\mathcal{C} \in \MC_{p}(m,i)$ for some $0 \le i \le m-1$, we can complete it by any element of $\MC_{q}(m,i)$. So we have exactly
      \begin{equation}\label{eq:zero.connectivity}
        \sum_{i=0}^{m-1} |\MC_{p}(m,i)| \cdot |\MC_{q}(m,i)| =   \frac{1}{m}\binom{mp}{p}\binom{mq}{q}
      \end{equation}
       multichains of zero-connectivity elements, where the equality in \eqref{eq:zero.connectivity} follows from \Cref{lem:number.multichains.B}.
\end{proof}

We aim to show that every multichain of $\annularncBpq$ that contains an element $w$ such that $c(w) > 0$ is also a multichain of $\annularncDpq$.
We first prove a preliminary lemma.

\begin{lemma}\label{lem:interval.under.connectivity}
  Let $w \in \annularncBpq$ with $c(w)>0$. Consider an element $w' \in \annularncBpq$ such that
  $w' \le w$. Then $w' \in \annularncDpq$.
\end{lemma}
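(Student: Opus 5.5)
Recall from \Cref{sec:annular} that an element of $\annularncBpq$ lies in $\annularncDpq$ exactly when it has no zero-block, or its zero-block meets both the inner and the outer circle. So we have to show that if $w'\le w$ and $c(w)>0$, then $w'$ cannot have a zero-block contained in a single circle.

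First we show that $w$ itself has no zero-block. Since $c(w)>0$, there is a nonzero block $A$ of $w$ meeting both circles, and $-A$ is then a distinct block also meeting both circles. Suppose $w$ also had a zero-block $Z$, say containing some outer point $x$ together with $-x$. In the annulus, $A$ and $-A$ are connected regions joining the outer circle to the inner circle. They are disjoint, so together they cut the annulus into two regions that are exchanged by the sign change (the rotation by $\pi$). The points $x$ and $-x$ then lie in different regions, so any connected drawing of $Z$ would have to cross $A$ or $-A$, which contradicts noncrossingness.

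To make this rigorous without topology, we argue with the cyclic order on each circle. Take $a\in A$ outer and $b\in A$ inner; then $-a,-b\in -A$. The chords $(a,b)$ and $(-a,-b)$ split the outer points other than $\pm a$ into two arcs $(a,-a)$ and $(-a,a)$, and they split the inner points likewise. Negation swaps the two arcs on each circle. A block containing $x$ and $-x$ would therefore have points in both sectors, and so it would cross $A$ or $-A$. This is the step I expect to need the most care: one has to phrase ``crossing'' in the annular setting, for instance through Nica--Oancea's characterization, or directly in terms of the cycle structure of $w\le w_{p,q}$. An alternative would be the permutation-theoretic route: $w\in[1,w_{p,q}]_B$ has zero-blocks coming from cycles $[\dots]$, and one could count reflection length to show these are incompatible with a connecting cycle $((\dots))$. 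I would try the geometric argument first.

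Next we pass to $w'\le w$. Since $w'\le w$, every block of $w'$ is contained in a block of $w$. A zero-block $Z'$ of $w'$ satisfies $Z'=-Z'$, so the block of $w$ containing it is also sign-invariant, that is, a zero-block of $w$. This contradicts the first step. Hence $w'$ has no zero-block and so lies in $\annularncDpq$. Note that this argument in fact proves the stronger statement that $w'$ has no zero-block at all.
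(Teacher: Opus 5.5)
Your proof is correct and follows the same route as the paper: show that $w$ has no zero-block, deduce that $w'$ (whose blocks refine those of $w$) has none either, and conclude from the characterization of $\annularncDpq$. The only difference is that the paper cites Nica--Oancea (Proposition~3.4) for the fact that positive connectivity excludes a zero-block, whereas you prove it directly. Your separation argument is valid for the geometric definition of $\annularncBpq$ used here: the blocks $A$ and $-A$ cut the annulus into two regions, with $x$ and $-x$ on different sides.
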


\begin{proof}
   If $w \in \NC^{B}(p,q)$ has connectivity $c>0$, then
   $w$ has no zero-blocks (see~\cite[Proposition~3.4]{NICA.annular.noncrossing}).  Then also $w'$ has no zero-blocks. 
   By the characterization of $\annularncDpq$ recalled in \Cref{sec:annular}, every element of $\annularncBpq$ with no zero-blocks lies in $\annularncDpq$, so $w' \in \annularncDpq$.
\end{proof}

 \begin{figure}[t]
\centering
\begin{tikzpicture}[scale=0.8]

\def\R{4}
\def\r{1.35}

\draw[line width=1.1pt] (0,0) circle (\R);
\draw[line width=1.1pt] (0,0) circle (\r);

\coordinate (P1)  at (90:\R);
\coordinate (P2)  at (30:\R);
\coordinate (P3)  at (-30:\R);
\coordinate (Pm1) at (-90:\R);
\coordinate (Pm2) at (-150:\R);
\coordinate (Pm3) at (150:\R);
\coordinate (P1')  at (60:\R);
\coordinate (P2')  at (0:\R);
\coordinate (P3')  at (-60:\R);
\coordinate (Pm1') at (-120:\R);
\coordinate (Pm2') at (-180:\R);
\coordinate (Pm3') at (120:\R);

\coordinate (P4)  at (90:\r);
\coordinate (Pm5) at (0:\r);
\coordinate (Pm4) at (-90:\r);
\coordinate (P5)  at (180:\r);
\coordinate (P4')  at (135:\r);
\coordinate (Pm5') at (45:\r);
\coordinate (Pm4') at (-45:\r);
\coordinate (P5')  at (-135:\r);

\fill[gray!30] (P1) -- (P2) -- (P4) -- cycle;
\fill[gray!30] (Pm1) -- (Pm2) -- (Pm4) -- cycle;

\draw[semithick] (P1)--(P2);
\draw[semithick] (P2)--(P4);
\draw[semithick] (P4)--(P1);

\draw[semithick] (Pm1)--(Pm2);
\draw[semithick] (Pm2)--(Pm4);
\draw[semithick] (Pm4)--(Pm1);

\draw[semithick] (P3)--(Pm5);
\draw[semithick] (Pm3)--(P5);

\draw[semithick, dashed] (P2')--(Pm5');
\draw[semithick, dashed] (Pm2')--(P5');
\draw[semithick, dashed] (P3')--(Pm4');
\draw[semithick, dashed] (Pm3')--(P4');

\foreach \x in {P1,P2,P3,Pm1,Pm2,Pm3,P4,Pm5,Pm4,P5}
{
    \fill (\x) circle (3.2pt);
}
\foreach \x in {P1',P2',P3',Pm1',Pm2',Pm3',P4',Pm5',Pm4',P5'}
{
    \fill (\x) circle (3.2pt);
}

\node[yshift=12pt] at (P1) {$1$};
\node[xshift=12pt] at (P2) {$2$};
\node[xshift=12pt] at (P3) {$3$};

\node[yshift=-14pt] at (Pm1) {$-1$};
\node[xshift=-14pt] at (Pm2) {$-2$};
\node[xshift=-14pt] at (Pm3) {$-3$};

\node[yshift=-10pt] at (P4) {$4$};
\node[xshift=16pt] at (Pm5) {$-5$};
\node[yshift=10pt] at (Pm4) {$-4$};
\node[xshift=-14pt] at (P5) {$5$};

\node[yshift=13pt] at (P1') {$1'$};
\node[xshift=12pt] at (P2') {$2'$};
\node[yshift=-13pt] at (P3') {$3'$};

\node[yshift=-14pt] at (Pm1') {$-1'$};
\node[xshift=-14pt] at (Pm2') {$-2'$};
\node[yshift=12pt] at (Pm3') {$-3'$};

\node[xshift=-12pt] at (P4') {$4'$};
\node[xshift=12pt,yshift=7pt] at (Pm5') {$-5'$};
\node[xshift=14pt] at (Pm4') {$-4'$};
\node[xshift=-12pt,yshift=-7pt] at (P5') {$5'$};

\end{tikzpicture}
\caption{The annular noncrossing partition of $((1,2,4))((3,-5))$ and its Kreweras complement $((2,-5))((3,-4))$ (dashed).}
\label{fig:annular.example}
\end{figure}

We describe how to compute the Kreweras complement $K_1^{\wpq}(w)$ for an element $w$ with $c(w)>0$, following \cite[Remark~6.7]{MingoNicaAnnular}.
As before, we add on the outer circle the points $1',\dots, p',-1',\dots,-p'$, where the point $i'$ is obtained with a clockwise rotation of $\frac{\pi}{2p}$ radians of the point $i \in \{1,\dots, p,-1,\dots,-p\}$.
In the same way, we also add on the inner circle the points $(p+1)',\dots, n',-(p+1)',\dots,-n'$, where the point $j'$ is obtained with a counterclockwise rotation of $\frac{\pi}{2q}$ radians of the point $j \in \{p+1,\dots, n,-(p+1),\dots,-n\}$.
Then $K_{1}^{\wpq}(w)$ corresponds to the coarsest partition of the set $\{1',\dots, n',-1',\dots,-n'\}$ such that no block of $w$ crosses a block of $K_{1}^{\wpq}(w)$.

We observe that $c\left(K^{\wpq}_{1}(w)\right)=c(w)$.
Indeed, the element $w$ sends exactly $2c(w)$ outer vertices to the inner circle, and exactly $2c(w)$ inner vertices to the outer circle,
since each of the $2c(w)$ cycles meeting both circles contains exactly one step in each direction
\cite[Definition~2.8 and Proposition~2.9]{NICA.annular.noncrossing}.
For each inner vertex $j$ sent by $w$ to the outer circle, let $i$ be the unique outer vertex satisfying $\wpq(i)=w(j)$. Then
\[
K_1^{\wpq}(w)(i)=w^{-1}(\wpq(i))=j.
\]
Conversely, every outer-to-inner step $i\mapsto j$ of the complement arises this way, since $w(j)=\wpq(i)$. This gives a bijection between the inner-to-outer steps of $w$ and the outer-to-inner steps of $K_1^{\wpq}(w)$. Consequently,
$2c(K_1^{\wpq}(w))=2c(w)$, proving the equality.
For example, if $w=((1,2,4))((3,-5))$, we see in Figure \ref{fig:annular.example} that $K_{1}^{\wpq}(w)=((2,-5))((3,-4))$; both elements $w$ and $K_{1}^{\wpq}(w)$ have connectivity $2$.

We now show that if $w \in \annularncBpq$ and $c(w)>0$, then every multichain containing $w$ is also a multichain of $\annularncDpq$.

\begin{lemma}\label{lem:connectivity.multichains}
  Let $\mathcal{C} \colon x_{1} \le \dots \le x_{m-1}$ be a multichain of $\annularncBpq$.
  Suppose that there exists $i \in \{1, \dots, m-1 \}$ such that $c(x_{i})>0$.
  Then $\mathcal{C}$ is also a multichain of $\annularncDpq$.
\end{lemma}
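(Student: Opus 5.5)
We have to show that every element $x_j$ of $\mathcal{C}$ lies in $\annularncDpq$. Since $\annularncDpq$ is an induced subposet of $\annularncBpq$, the relations $x_j \le x_{j+1}$ then hold automatically in $\annularncDpq$. The elements below $x_i$ are already handled: for $j \le i$ we have $x_j \le x_i$ with $c(x_i)>0$, so \Cref{lem:interval.under.connectivity} gives $x_j \in \annularncDpq$.

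For the elements above $x_i$, I would argue by duality using the Kreweras complement $K_1^{\wpq}$. This map is an anti-automorphism of $\interpq_B = \annularncBpq$. By the discussion preceding the statement, it preserves connectivity, so $c(K_1^{\wpq}(x_i)) = c(x_i) > 0$. For $j \ge i$ we have $x_i \le x_j$, hence $K_1^{\wpq}(x_j) \le K_1^{\wpq}(x_i)$. By \Cref{lem:interval.under.connectivity}, $K_1^{\wpq}(x_j)$ has no zero-block. It remains to transfer this back to $x_j$, i.e.\ to show that $K_1^{\wpq}$ maps $\annularncDpq$ into itself.

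For this I would use parity, as in \Cref{rem:kreweras.complement}. The element $\wpq = \wpp w_q$ is even, being a product of two odd elements, so $x_j$ and $K_1^{\wpq}(x_j) = x_j^{-1}\wpq$ have the same parity. Since $\annularncDpq$ is exactly the set of even elements of $\interpq_B$ (as $D_n$ is the even part of $B_n$), $K_1^{\wpq}$ preserves membership in $\annularncDpq$. Thus $K_1^{\wpq}(x_j)$ is even, hence so is $x_j$, and $x_j \in \annularncDpq$.

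A slicker variant avoids \Cref{lem:interval.under.connectivity} for the upper part. For $j \ge i$, $x_j \ge x_i$ is obtained by merging blocks of $x_i$, so it still has a block meeting both circles. Then either $c(x_j)>0$, in which case $x_j$ has no zero-block, or its zero-block meets both circles; in both cases $x_j \in \annularncDpq$ by the characterization in \Cref{sec:annular}. The one point to check here is that coarsening preserves having a block that meets both circles, which is immediate from the reverse refinement order.

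The main obstacle is making sure the parity and Kreweras step is airtight. It depends on the identification of $\annularncDpq$ with the even elements, and on $\wpq$ being even; both are recalled in \Cref{sec:annular}. I would present the direct coarsening argument as the primary proof and mention the Kreweras argument only as a cross-check.
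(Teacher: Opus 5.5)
Your proposal is correct, and your primary (coarsening) argument takes a different route from the paper's. The paper handles the elements above $x_i$ through the Kreweras complement. Since $c(K_1^{\wpq}(x_i)) = c(x_i) > 0$, \Cref{lem:interval.under.connectivity} gives $[1,K_1^{\wpq}(x_i)]_B = [1,K_1^{\wpq}(x_i)]_D$. Left multiplication by $x_i$ is an isomorphism onto $[x_i,\wpq]$ in both types (\Cref{rmk:interval.isomorphism}), so this transports to $[x_i,\wpq]_B = [x_i,\wpq]_D$.

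Your Kreweras cross-check is a close variant of this: it uses the anti-automorphism $K_1^{\wpq}$ of the whole interval instead of the translation. Your parity verification is simply the fact that $D_n$ is a subgroup containing $\wpq$, so $x_j \in D_n$ if and only if $x_j^{-1}\wpq \in D_n$.

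Your coarsening argument works entirely with partitions. The block of $x_i$ meeting both circles is contained in a block of $x_j$, so that block also meets both circles. Then either $c(x_j)>0$, which excludes a zero-block by the same fact used in the proof of \Cref{lem:interval.under.connectivity}, or the zero-block itself meets both circles. In both cases $x_j \in \annularncDpq$ by the characterization recalled in \Cref{sec:annular}.

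This route is more elementary and self-contained. It makes the identity $c(K_1^{\wpq}(w)) = c(w)$ unnecessary, and the paper proves that identity just before this lemma and uses it only here. The paper's version is more structural and parallels the Kreweras-duality argument of \Cref{lem:number.multichains.B}. Both routes give the same stronger conclusion: every element of $\annularncBpq$ above $x_i$ already lies in $\annularncDpq$.
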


\begin{proof}
      By Lemma \ref{lem:interval.under.connectivity}, we know that $x_{j} \in \annularncDpq$, for every $j \le i$. 
      We have observed that $c(x_{i})=c(K_{1}^{\wpq}(x_{i}))>0$.
      By Remark \ref{rmk:interval.isomorphism}, the map $y\mapsto x_{i}y$ is a poset isomorphism from $[1,K^{\wpq}_{1}(x_{i})]_{B}$ to $[x_{i},\wpq]_{B}$, and likewise in type $D$.
      Since $c(K_{1}^{\wpq}(x_{i}))>0$, Lemma \ref{lem:interval.under.connectivity} gives
      $$[1,K^{\wpq}_{1}(x_{i})]_{B}=[1,K^{\wpq}_{1}(x_{i})]_{D}.$$
      Applying the same map $y\mapsto x_{i}y$ to both intervals, we obtain
      $$[x_{i},\wpq]_{B}=[x_{i},\wpq]_{D}.$$
      Hence $x_{j} \in [x_{i},\wpq]_{D}$ for all $j \ge i$.
      Since $\annularncDpq$ is an induced subposet of $\annularncBpq$, it follows that $\mathcal{C}$ is a multichain of $\annularncDpq$.
\end{proof}

We finish the section with the proof of \Cref{thm:zeta.polynomial.D}.

\begin{proof}[Proof of \Cref{thm:zeta.polynomial.D}]
    Let $x_{1} \le \dots \le x_{m-1}$ be a multichain of $\annularncDpq$.
    Either $c(x_{i})=0$ for all $i$, or $c(x_{i})>0$ for some $i$.
    By \Cref{lem:zero.connectivity.multichains}, there are $\frac{1}{m}\binom{mp}{p}\binom{mq}{q}$ multichains of the first kind.
    By \Cref{lem:connectivity.multichains}, the multichains of the second kind are exactly the multichains of $\annularncBpq$ that contain an element of positive connectivity, which are counted by the second summand of \eqref{eq:zeta.annular.B}.
    Hence
    \begin{align*}
    Z(\NC^{D}(p,q),m)
    &= \frac{1}{m}
    \binom{mp}{p}
    \binom{mq}{q}
    +
    \sum_{c=1}^{p}
    2c
    \binom{mp}{p-c}
    \binom{mq}{q+c} \\
    &= Z(\NC^{B}(p,q),m) - \left(1-\frac{1}{m}\right)\binom{mp}{p}\binom{mq}{q}.
    \end{align*}
    By \Cref{cor:zeta.poly.B.closed}, we obtain the closed formula
    \[ Z(\NC^{D}(p,q),m) = \frac{p+q+2pq(m-1)}{(p+q)m} \binom{mp}{p} \binom{mq}{q}. \qedhere \]
\end{proof}

From \Cref{thm:zeta.polynomial.D}, we get the following enumerative results for $\annularncDpq$.

\begin{corollary}\label{cor:enumerative.results}
  Let $p$ and $q$ be two positive integers, and let $P = \annularncDpq$. Then:
  \begin{enumerate}[label=(\roman*)]
      \item The number of elements of $P$ is 
      $$|P|= \frac{p+q+2pq}{2(p+q)} \binom{2p}{p} \binom{2q}{q};$$
      \item The M\"obius function of $P$ between its minimum and maximum is 
      $$\mu_{P}\left(1,\wpq \right)=(-1)^{p+q} \frac{4pq-p-q}{4(p+q)} \binom{2p}{p} \binom{2q}{q};$$
      \item The number of maximal chains in $P$ is 
      $$ \frac{2pq}{p+q} \binom{p+q}{p} p^{p} q^{q}.$$
  \end{enumerate}
\end{corollary}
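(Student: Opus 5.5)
Parts (i) and (ii) come directly from \Cref{thm:zeta.polynomial.D} by evaluating the zeta polynomial $Z(m)=Z(\annularncDpq,m)$ at $m=2$ and $m=-1$. Part (iii) needs more care: the count of maximal chains is the leading coefficient of $Z$ multiplied by $(n)!$, where $n=p+q$ is the rank.

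For (i), use $|P| = Z_P(2)$. Substituting $m=2$ gives
\[
\frac{p+q+2pq}{2(p+q)}\binom{2p}{p}\binom{2q}{q},
\]
which is exactly the claimed expression.

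For (ii), $P$ is bounded with minimum $1$ and maximum $\wpq$, so $\mu_P(1,\wpq)=Z_P(-1)$. At $m=-1$ the prefactor is
\[
\frac{p+q-4pq}{-(p+q)} = \frac{4pq-p-q}{p+q}.
\]
The binomials become $\binom{-p}{p} = (-1)^p\binom{2p-1}{p} = (-1)^p\tfrac12\binom{2p}{p}$, and similarly for $q$. Multiplying these gives the claimed formula, including the factor $\tfrac14$ and the sign $(-1)^{p+q}$. The one thing to check carefully here is the identity $\binom{-p}{p}=(-1)^p\binom{2p-1}{p}$, which follows from the generalized binomial definition. Since $Z_P$ is a polynomial, evaluating it at $-1$ is legitimate.

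For (iii), use the standard fact that for a graded bounded poset of rank $n$, $Z_P(m)$ has degree $n$. Its leading coefficient is $M/n!$, where $M$ is the number of maximal chains. This holds because $Z_P(m)=\sum_k b_k\binom{m-2}{k-1}$, where $b_k$ counts chains $1=y_0<y_1<\dots<y_k=\wpq$, and only the $k=n$ term contributes to degree $n$. So I only need the leading coefficient of $Z$ in $m$:
\begin{itemize}
\item $\binom{mp}{p}$ has leading term $\frac{p^p}{p!}m^p$;
\item $\binom{mq}{q}$ has leading term $\frac{q^q}{q!}m^q$;
\item the prefactor $\frac{p+q+2pq(m-1)}{(p+q)m}$ tends to the constant $\frac{2pq}{p+q}$.
\end{itemize}
Hence the leading coefficient is $\frac{2pq}{p+q}\cdot\frac{p^pq^q}{p!\,q!}$. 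Multiplying by $(p+q)!$ gives
\[
\frac{2pq}{p+q}\binom{p+q}{p}p^pq^q,
\]
as claimed.

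The main step to justify is the leading-coefficient fact, and in particular that the rank equals $p+q$. This holds because $\wpq$ is quasi-Coxeter in $D_n$, so its reflection length is $n$.
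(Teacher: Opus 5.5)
Your proof follows the same route as the paper's, which just evaluates the zeta polynomial of \Cref{thm:zeta.polynomial.D} at $m=2$ and $m=-1$ and reads off the leading coefficient via \cite[Proposition 3.12.1]{Stanley_2011}. Your computations in (i), (ii), and (iii) are all correct, including $\binom{-p}{p}=(-1)^p\tfrac12\binom{2p}{p}$ and the rank $p+q$.

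There is one slip, in your justification of the leading-coefficient fact. The identity $Z_P(m)=\sum_k b_k\binom{m-2}{k-1}$, with $b_k$ counting chains $1=y_0<\dots<y_k=w_{p,q}$, is mis-indexed. For a two-element chain it gives $1$ instead of $m$. Its $k=n$ term also has degree $n-1$, which contradicts your next sentence.

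The correct forms are either of the following:
\begin{enumerate}[label=(\alph*)]
\item $Z_P(m)=\sum_k b_k\binom{m}{k}$ with your $b_k$;
\item $Z_P(m)=\sum_k c_k\binom{m-2}{k-1}$, with $c_k$ the number of $k$-element chains in $P$, as in \Cref{subsec:zeta-exceptional}. Here the top degree comes from $k=n+1$.
\end{enumerate}
Either way the leading coefficient is the number of maximal chains divided by $n!$, so your conclusion stands.
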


\begin{proof}
      All the results follow from \Cref{thm:zeta.polynomial.D} and \cite[Proposition 3.12.1]{Stanley_2011}. 
\end{proof}

\section{Exceptional types}
\label{sec:exceptional-cases}

In this section, we prove our results for the exceptional types.
If $W$ is of type $I_2(m)$, then the interval $[1,w]$ below a quasi-Coxeter element $w$ has rank two.
Every bounded poset of rank two is EL-shellable, and by \Cref{rmk:rising.chain} a total order of $\refl$ makes the natural labeling of $[1,w]$ an EL-labeling if and only if it is $w$-compatible.
So \Cref{thm:main-shellability,thm:main-shellability-refined} are immediate in this case.

It remains to consider the types $H_3$, $F_4$, $H_4$, $E_6$, $E_7$, and $E_8$.
Their proper quasi-Coxeter elements form $27$ conjugacy classes, which we denote as in \cite{carter1972conjugacy,baumeister2023interval-part2}:
\[
\begin{gathered}
H_3(a_1),\ H_3(a_2),\quad F_4(a_1),\quad H_4(a_1),\ \dots,\ H_4(a_{10}),\\
E_6(a_1),\ E_6(a_2),\quad E_7(a_1),\ \dots,\ E_7(a_4),\quad E_8(a_1),\ \dots,\ E_8(a_8).
\end{gathered}
\]
We study the corresponding intervals by computer.

Our results are summarized in \Cref{tab:exceptional-summary}, together with those of \Cref{sec:interval-monoid} on interval monoids.
In \Cref{subsec:compatible-exceptional} we determine the classes that admit a $w$-compatible order, and in \Cref{subsec:every-compatible} we show that for these classes, and for the Coxeter elements, every $w$-compatible order makes the natural labeling an EL-labeling.
In \Cref{subsec:non-natural} we find EL-labelings of a different kind for the remaining classes of type $H_4$.
Finally, \Cref{subsec:zeta-exceptional} records the zeta polynomials.

\begin{table}[htbp]
\centering
\footnotesize
\renewcommand{\arraystretch}{1.12}
\setlength{\tabcolsep}{4pt}
\begin{tabular}{l|r|r|r|c|c|c|c}
\hline
class & order & $|[1,w]|$ & $|\mu(1,w)|$ & lattice
 & \begin{tabular}[b]{@{}c@{}}natural\\ EL\end{tabular}
 & \begin{tabular}[b]{@{}c@{}}EL-\\ shellable\end{tabular}
 & \begin{tabular}[b]{@{}c@{}}$M_w$\\ cancellative\end{tabular} \\
\hline
$H_3(a_1)$ & $10$ & $32$ & $21$ & yes & yes & yes & yes \\
$H_3(a_2)$ & $6$ & $32$ & $25$ & yes & yes & yes & yes \\
\hline
$F_4(a_1)$ & $6$ & $128$ & $125$ & no & yes & yes & ? \\
\hline
$H_4(a_1)$ & $30$ & $280$ & $232$ & yes & yes & yes & yes \\
$H_4(a_2)$ & $20$ & $286$ & $323$ & no & no & yes & ? \\
$H_4(a_3)$ & $20$ & $286$ & $323$ & no & no & yes & ? \\
$H_4(a_4)$ & $15$ & $323$ & $455$ & no & no & yes & ? \\
$H_4(a_5)$ & $15$ & $323$ & $455$ & no & no & yes & ? \\
$H_4(a_6)$ & $12$ & $294$ & $363$ & no & no & yes & ? \\
$H_4(a_7)$ & $10$ & $332$ & $554$ & no & no & yes & ? \\
$H_4(a_8)$ & $10$ & $432$ & $729$ & no & no & yes & ? \\
$H_4(a_9)$ & $10$ & $432$ & $729$ & no & no & yes & ? \\
$H_4(a_{10})$ & $6$ & $404$ & $941$ & no & no & yes & ? \\
\hline
$E_6(a_1)$ & $9$ & $1001$ & $728$ & no & yes & yes & no \\
$E_6(a_2)$ & $6$ & $1152$ & $1125$ & no & yes & yes & no \\
\hline
$E_7(a_1)$ & $14$ & $5280$ & $4719$ & no & yes & yes & no \\
$E_7(a_2)$ & $12$ & $5700$ & $5610$ & no & yes & yes & no \\
$E_7(a_3)$ & $30$ & $6688$ & $8970$ & no & no & ? & no \\
$E_7(a_4)$ & $6$ & $8192$ & $15\,625$ & no & no & ? & no \\
\hline
$E_8(a_1)$ & $24$ & $33\,592$ & $37\,145$ & no & yes & yes & no \\
$E_8(a_2)$ & $20$ & $38\,896$ & $50\,388$ & no & no & ? & no \\
$E_8(a_3)$ & $12$ & $46\,305$ & $71\,874$ & no & no & ? & no \\
$E_8(a_4)$ & $18$ & $46\,475$ & $83\,300$ & no & no & ? & no \\
$E_8(a_5)$ & $15$ & $52\,003$ & $105\,105$ & no & no & ? & no \\
$E_8(a_6)$ & $10$ & $64\,827$ & $186\,624$ & no & no & ? & no \\
$E_8(a_7)$ & $12$ & $67\,200$ & $202\,125$ & no & no & ? & no \\
$E_8(a_8)$ & $6$ & $102\,014$ & $591\,251$ & no & no & ? & no \\
\hline
\end{tabular}
\caption{The proper quasi-Coxeter classes of the exceptional types.
The columns give the order of $w$; the cardinality of $[1,w]$ and $|\mu(1,w)|$, both obtained from \Cref{tab:exceptional-zeta.Weyl,tab:exceptional-zeta.H}; whether $[1,w]$ is a lattice \cite[Theorem~2.12]{baumeister2023interval-part2}; whether some total order of $\refl$ makes the natural labeling an EL-labeling, which happens exactly when a $w$-compatible order exists (\Cref{thm:exceptional-compatible-exists} and \Cref{subsec:every-compatible}); whether $[1,w]$ is EL-shellable (\Cref{cor:EL-not-E7-E8}); and whether the interval monoid $M_w$ is cancellative (\Cref{sec:interval-monoid}).
A question mark means that the answer is not known.}
\label{tab:exceptional-summary}
\end{table}

\subsection{Compatible orders}
\label{subsec:compatible-exceptional}

We first determine which proper quasi-Coxeter elements admit a $w$-compatible order.

\begin{theorem}\label{thm:exceptional-compatible-exists}
    Let $w$ be a proper quasi-Coxeter element of a Coxeter group of type $H_3$, $F_4$, $H_4$, $E_6$, $E_7$, or $E_8$.
    Then a $w$-compatible total order of $\refl$ exists if and only if $w$ belongs to one of the nine classes
    \[
    H_3(a_1),\ H_3(a_2),\ F_4(a_1),\ H_4(a_1),\ E_6(a_1),\ E_6(a_2),\ E_7(a_1),\ E_7(a_2),\ E_8(a_1).
    \]
\end{theorem}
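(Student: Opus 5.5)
The statement is a finite problem for each of the $27$ conjugacy classes, and $w$-compatibility is invariant under conjugation (conjugating $w$ and $\prec$ simultaneously by an element of $W$ transports compatible orders). So I will fix one representative $w$ per class and treat the existence of a $w$-compatible order as a constraint-satisfaction problem to be settled by computer.

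I will encode it as follows. First, enumerate all rank-two elements $u \le w$, using reduced reflection factorizations $u = ab$ and testing $\length(a)+\length(b)+\length(b a w)$-type conditions, that is, $\length(u)+\length(u^{-1}w)=n$. For each such $u$, compute the set of reflections below $u$. By \Cref{rmk:rank-two}, the map $r \mapsto ru$ permutes these reflections. If $u$ is of type $A_1\times A_1$, the two maximal chains are $(a,b)$ and $(b,a)$; exactly one of them is rising, so there is no constraint. If $u$ is irreducible with reflections $r_1,\dots,r_m$ in cyclic order, with $u=r_hr_{h+1}$, then compatibility says exactly one index $h$ has $r_h\prec r_{h+1}$. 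Equivalently, the restriction of $\prec$ to $\{r_1,\dots,r_m\}$ is a cyclic rotation of the reversed cyclic order. This gives $m$ possible local orders per rank-two element.

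Existence of a compatible order means choosing one local order for each $u$ so that the union of all the resulting relations is acyclic. Any linear extension is then a $w$-compatible order, by definition. I will solve this with a SAT solver or an ILP solver, with one Boolean variable $x_{a,b}$ for each pair $a\ne b$ of reflections, plus antisymmetry, transitivity (or an acyclicity encoding), and per-$u$ clauses of the form "exactly one rising consecutive pair".

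For the nine listed classes, the output is an explicit order, which can be recorded and independently checked. This direct verification is the easy direction. For the remaining $18$ classes (eight in $H_4$, two in $E_7$, seven in $E_8$), I need a certificate of infeasibility. I will first try to extract a small unsatisfiable core, mirroring the hand arguments of \Cref{lem:square} and Case 4 of type $D$: a short cyclic chain of forced inequalities around a few rank-two elements. Then I will check that core by hand or by a simple propagation argument.

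The main obstacle is the size in $E_8$. There are $120$ reflections, so roughly $7000$ pairwise variables and a very large number of transitivity clauses, with many rank-two elements. To keep this tractable I would exploit two reductions:

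\begin{itemize}
\item the centralizer of $w$ acts on the constraints, so symmetry breaking can be applied;
\item the sub-configuration reduction of \Cref{prop:parabolic-submonoid}/\Cref{lem:mov-embedding}: if some $x\le w$ (a parabolic quasi-Coxeter element of smaller rank) admits no $x$-compatible order, then neither does $w$, since every rank-two interval below $x$ is one below $w$.
\end{itemize}

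For $E_7(a_3),E_7(a_4)$ and the $E_8(a_i)$ with $i\ge2$, I would therefore first search for such an $x$ below $w$ in a smaller type (for instance an $H_4$-like or $E_7$ obstruction, or a proper parabolic element of type $E_6$, $D_n$, or a smaller $E_7$ class). This reduces the infeasibility proofs to a handful of moderate instances, ideally down to the $H_4$ and small $E_7$ cases, which are directly solvable.
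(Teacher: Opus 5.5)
Your proposal is correct and is essentially the paper's proof, which simply checks all $27$ classes by computer. Your encoding is a sound way to run that check: each irreducible rank-two $u\le w$ forces the restriction of $\prec$ to its reflections to be one of the $m$ rotations of the reversed cyclic order, and you ask for an acyclic choice of these.

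Two minor slips. First, the classes without a compatible order in $H_4$ are the nine classes $H_4(a_2),\dots,H_4(a_{10})$, not eight. Second, your parabolic sub-obstruction shortcut cannot succeed for $E_7(a_3)$ and $E_7(a_4)$. Every $x<w$ there is quasi-Coxeter in a proper parabolic subgroup whose components are of type $A$, $D$, or $E_6$, and all of these admit compatible orders; also, no $H_4$-type parabolic subgroup exists inside a Weyl group. So those instances must be solved directly, as your fallback already anticipates.
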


\begin{proof}
    For each of the $27$ classes, we checked by computer whether a $w$-compatible order exists; it exists exactly for the nine classes in the statement.
\end{proof}

For the other eighteen classes, no total order of $\refl$ is $w$-compatible, so none makes the natural labeling an EL-labeling.
Since such classes exist in each of the types $H_4$, $E_7$, and $E_8$, this proves the second part of \Cref{thm:main-shellability-refined}.

\subsection{Every compatible order yields an EL-labeling}
\label{subsec:every-compatible}

We now show that, as in the infinite families (\Cref{thm:compatible-EL-ABD}), every $w$-compatible order makes the natural labeling an EL-labeling, for Coxeter elements and proper quasi-Coxeter elements alike; for Coxeter elements, this is Williams's conjecture (see \cite[Section~7.3]{defant2025permutahedron}).
For the Coxeter elements and for the nine classes of \Cref{thm:exceptional-compatible-exists}, we checked by computer that every $w$-compatible total order of $\refl$ makes the natural labeling of $[1,w]$ an EL-labeling.
For the other eighteen classes there is nothing to check, because no $w$-compatible order exists.

Since every total order that makes the natural labeling an EL-labeling is $w$-compatible, it follows that, if $w$ is a proper quasi-Coxeter element of type $H_3$, $F_4$, $H_4$, $E_6$, $E_7$, or $E_8$, then some total order of $\refl$ makes the natural labeling of $[1,w]$ an EL-labeling if and only if $w$ belongs to one of the nine classes of \Cref{thm:exceptional-compatible-exists}, and in this case every $w$-compatible order does.

All the proper quasi-Coxeter elements of types $H_3$, $F_4$, and $E_6$ belong to the classes of \Cref{thm:exceptional-compatible-exists}, and the Coxeter elements are covered by \Cref{thm:EL-labeling.for.Coxeter.elements}.
So \Cref{thm:main-shellability} holds for these three types.
Moreover, the computation above, together with \Cref{thm:compatible-EL-ABD} and the dihedral case, shows that every $w$-compatible order yields an EL-labeling for every quasi-Coxeter element $w$ of a finite Coxeter group; if $W$ is reducible, then $[1,w]$ is the product of the intervals below the components of $w$, and we argue as in the proof of \Cref{thm:compatible-EL-ABD}.
This completes the proof of \Cref{thm:main-shellability-refined}; in particular, Williams's conjecture holds for all finite Coxeter groups.

\begin{remark}[Thomas's reduction in the exceptional types]\label{rem:thomas-exceptional}
    By Thomas's argument in \Cref{rem:thomas.proof}, to show that every $w$-compatible order yields an EL-labeling of $[1,w]$, it is enough to show that, for every $x \le w$ with $\operatorname{rank}(x) \le m$ (where $m$ is as in \Cref{rem:thomas.proof}), every $x$-compatible order yields an EL-labeling of $[1,x]$.
    In the exceptional types, this reduction does not shorten the computation much.
    Indeed, if $n$ is the rank of $W$, then $m = n$ for all the Coxeter elements and for the classes $H_3(a_1)$, $H_3(a_2)$, $H_4(a_1)$, and $E_8(a_1)$, so that the reduction does not help; $m = n-1$ for $F_4(a_1)$, $E_6(a_1)$, $E_7(a_1)$, and $E_7(a_2)$; and $m = n-2$ for $E_6(a_2)$.
\end{remark}

\subsection{EL-labelings that are not natural}
\label{subsec:non-natural}

For the eighteen classes that do not appear in \Cref{thm:exceptional-compatible-exists}, EL-shellability can only come from edge labelings other than the natural one.
We find such labelings for all the classes of type $H_4$.

\begin{theorem}\label{thm:H4-EL}
    Let $w$ be a quasi-Coxeter element of a Coxeter group of type $H_4$.
    Then $[1,w]$ is EL-shellable.
\end{theorem}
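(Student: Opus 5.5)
The plan is to split the quasi-Coxeter elements of $H_4$ into three groups and treat each one separately.

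\textbf{Classes with a natural labeling.} For the Coxeter element, \Cref{thm:EL-labeling.for.Coxeter.elements} applies directly. For $H_4(a_1)$, \Cref{thm:exceptional-compatible-exists} supplies a $w$-compatible order, and the computation of \Cref{subsec:every-compatible} shows that this order makes the natural labeling an EL-labeling.

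\textbf{Remaining nine classes.} These are $H_4(a_2),\dots,H_4(a_{10})$, for which no natural EL-labeling exists. Since EL-shellability is invariant under conjugation, it suffices to fix one representative $w$ in each class and exhibit an explicit EL-labeling of $[1,w]$, verified by computer. The posets are small: by \Cref{tab:exceptional-summary} they have at most $432$ elements and rank four. The labels will be chosen as follows.
\begin{enumerate}[label=(\roman*)]
\item Keep the reflection labels but allow the order on them to depend on the position in the chain. Concretely, a cover $u\lessdot v$ is labeled by the pair $(\operatorname{rank}(u), u^{-1}v)$, ordered by a separate total order $\prec_k$ on $\refl$ for each rank $k\in\{0,1,2,3\}$. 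The $w$-compatibility obstruction only involves rank-two intervals, and these are now governed by different orders at different heights. This gives more freedom while still leaving a structured search space.
\item If (i) fails, use a general edge labeling with values in $\{1,\dots,N\}$. Build it greedily from the top: for each coatom $wr$, take an EL-labeling of $[1,wr]$. Such a labeling exists because $wr$ is a parabolic quasi-Coxeter element of rank three, so by \Cref{thm:main-shellability} for $H_3$, $A_3$, $A_1\times A_2$ and so on, $[1,wr]$ is EL-shellable. Then adjust labels on the top edges so that the full interval and every interval $[u,w]$ has a unique rising chain which is lexicographically first.
\end{enumerate}

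\textbf{Search and verification.} To find labelings of type (i), set up the conditions as a SAT or constraint problem. The variables are the relative orders within each $\prec_k$. The constraints are that every interval $[u,v]$ of rank two, three or four has exactly one rising maximal chain, and that this chain is lexicographically smallest. Intervals here are not isomorphic to lower intervals via \Cref{rmk:interval.isomorphism}, since the labels now depend on rank, so all intervals must be checked. Given the sizes involved, the number of intervals and chains is manageable. A verifier then checks \Cref{def:EL.labeling} directly on each found labeling.

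\textbf{Main obstacle.} The hardest step is the existence problem: rank-dependent orders may still be infeasible for some classes, most plausibly $H_4(a_{10})$ or $H_4(a_8)$, $H_4(a_9)$, which have the largest Möbius numbers. If so, I would fall back to recursive atom orderings (CL-labelings) in the sense of Björner--Wachs. Since $[1,w]$ is graded, CL-shellability also gives shellability, but the statement asks for EL. So I would instead try to convert the data into an EL-labeling by labeling each edge $u\lessdot v$ with the position of $v$ in a chosen ordering of the atoms of $[u,w]$, and search over those orderings by computer.
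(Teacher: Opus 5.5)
Your proposal follows the paper's proof: the Coxeter element is covered by \Cref{thm:EL-labeling.for.Coxeter.elements}, $H_4(a_1)$ by the natural labeling with a $w$-compatible order (\Cref{thm:exceptional-compatible-exists} and the check of \Cref{subsec:every-compatible}), and each of the nine classes $H_4(a_2),\dots,H_4(a_{10})$ by an EL-labeling found and verified by computer, whose form the paper does not specify. Your search heuristics go beyond what the paper records, and two points need fixing if you pursue them. Scheme (i) needs a single total order on all pairs $(k,r)$, because whether a chain is rising depends on comparing labels at adjacent ranks, which the separate orders $\prec_k$ do not determine. Scheme (ii) is not well defined as stated, since the intervals $[1,wr]$ for different coatoms share edges on which independently chosen EL-labelings need not agree.
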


\begin{proof}
    For the Coxeter elements this is \Cref{thm:EL-labeling.for.Coxeter.elements}, and for $H_4(a_1)$ it follows from \Cref{thm:exceptional-compatible-exists} and \Cref{subsec:every-compatible}.
    For each of the other nine classes, we found by computer an EL-labeling of $[1,w]$.
\end{proof}

\begin{corollary}\label{cor:EL-not-E7-E8}
    Let $w$ be a quasi-Coxeter element of a finite irreducible Coxeter group $W$.
    If $W$ is not of type $E_7$ or $E_8$, then $[1,w]$ is EL-shellable.
\end{corollary}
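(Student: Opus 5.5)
The plan is to assemble the corollary from the results already established, splitting by the type of $W$ and, where needed, by the conjugacy class of $w$. The key preliminary observation is that EL-shellability of $[1,w]$ is invariant under conjugation. Conjugation by any $g \in W$ restricts to a poset isomorphism $[1,w] \to [1,gwg^{-1}]$, and it maps an EL-labeling to an EL-labeling, since it just transports the labels. Hence it is enough to treat one representative in each conjugacy class of quasi-Coxeter elements.

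\textbf{Infinite families.} If $W$ is of type $A_n$ or $B_n$, every quasi-Coxeter element is a Coxeter element, and \Cref{thm:EL-labeling.for.Coxeter.elements} gives an EL-labeling. If $W$ is of type $D_n$, the Coxeter elements are handled in the same way. For a proper quasi-Coxeter element, we may assume $w = w_{p,q}$. By \Cref{prop:D-compatible-order} there is a $w_{p,q}$-compatible total order, and \Cref{thm:compatible-EL-ABD} shows that the natural edge labeling with respect to this order is an EL-labeling.

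\textbf{Dihedral type.} If $W$ is of type $I_2(m)$, then $[1,w]$ is a bounded poset of rank two. Such a poset is EL-shellable: label the edges out of the minimum by $1$ and the edges into the maximum by $2$ for one chosen chain, and the opposite way for all other chains.

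\textbf{Exceptional types.} For types $H_3$, $F_4$, and $E_6$, the Coxeter elements are again covered by \Cref{thm:EL-labeling.for.Coxeter.elements}. Every proper quasi-Coxeter class in these types, namely $H_3(a_1)$, $H_3(a_2)$, $F_4(a_1)$, $E_6(a_1)$, and $E_6(a_2)$, appears in the list of \Cref{thm:exceptional-compatible-exists}, so a $w$-compatible order exists. By the computation in \Cref{subsec:every-compatible}, any such order makes the natural labeling an EL-labeling. Type $H_4$ is exactly \Cref{thm:H4-EL}.

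The only step that needs real care is checking that this list of types is exhaustive. By the classification of finite irreducible Coxeter groups, every such $W$ is of type $A_n$, $B_n$, $D_n$, $I_2(m)$, $H_3$, $H_4$, $F_4$, $E_6$, $E_7$, or $E_8$. Every type other than $E_7$ and $E_8$ is covered above. Low-rank coincidences such as $B_2 = I_2(4)$ cause no problem, since they fall under either description.
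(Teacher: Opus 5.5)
Your proposal is correct and follows essentially the same route as the paper, which likewise cites \Cref{sec:shellability-type-D} (via \Cref{prop:D-compatible-order} and \Cref{thm:compatible-EL-ABD}) for the infinite families, the rank-two observation for $I_2(m)$, and \Cref{thm:EL-labeling.for.Coxeter.elements}, \Cref{thm:exceptional-compatible-exists}, \Cref{subsec:every-compatible}, and \Cref{thm:H4-EL} for the exceptional types. Your additions, namely the conjugation-invariance remark and the explicit EL-labeling of a rank-two bounded poset, are correct and simply spell out details the paper leaves implicit.
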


\begin{proof}
    For the infinite families and the dihedral types, see \Cref{sec:shellability-type-D} and the beginning of this section.
    In the exceptional types, the Coxeter elements are covered by \Cref{thm:EL-labeling.for.Coxeter.elements}, and the proper quasi-Coxeter elements by \Cref{thm:exceptional-compatible-exists}, \Cref{subsec:every-compatible}, and \Cref{thm:H4-EL}.
\end{proof}

In particular, the conclusion of \Cref{thm:main-shellability} also holds for $W$ of type $H_4$.
For the nine classes $E_7(a_3)$, $E_7(a_4)$, $E_8(a_2), \dots, E_8(a_8)$ we do not know whether $[1,w]$ is EL-shellable, although \Cref{conj:EL} predicts that it is.
These intervals are much larger than those of type $H_4$ (\Cref{tab:exceptional-summary}), and they are beyond the reach of our computations.

\subsection{Zeta polynomials}
\label{subsec:zeta-exceptional}

For a conjugacy class $\Gamma$, we write $Z_\Gamma(m)$ for the zeta polynomial of $[1,w]$ with $w \in \Gamma$.
If $c_k$ denotes the number of chains $x_1 < x_2 < \dots < x_k$ with $k$ elements in $[1,w]$, then it is proven in \cite[Proposition 3.12.1]{Stanley_2011} that
\[
Z_\Gamma(m) = \sum_{k \ge 1} c_k \binom{m-2}{k-1},
\]
because the multichains with $m-1$ elements and $k$ distinct values correspond bijectively to the pairs formed by a chain with $k$ elements and a composition of $m-1$ into $k$ parts, which records the multiplicities.
We computed the numbers $c_k$, and hence the polynomials $Z_\Gamma(m)$, by computer; they are listed in \Cref{tab:exceptional-zeta.Weyl,tab:exceptional-zeta.H}.

\begin{table}[p]
\centering
\scriptsize
\setlength{\tabcolsep}{4pt}
\begin{tabular}{l|r|l}
\hline
class & $|[1,w]|$ & $Z_\Gamma(m)$\rule[-1.2ex]{0pt}{3.4ex} \\
\hline
$F_4(a_1)$ & $128$ & $\displaystyle \rule{0pt}{3.2ex}27m^4-54m^3+36m^2-8m\rule[-1.5ex]{0pt}{0pt}$ \\
\hline
$E_6(a_1)$ & $1001$ & $\displaystyle \rule{0pt}{4.3ex}\frac{6561}{80}m^6-\frac{3645}{16}m^5+\frac{3969}{16}m^4-\frac{2115}{16}m^3+\frac{1377}{40}m^2-\frac{7}{2}m\rule[-2.6ex]{0pt}{0pt}$ \\
$E_6(a_2)$ & $1152$ & $\displaystyle \rule{0pt}{3.2ex}108m^6-324m^5+387m^4-230m^3+68m^2-8m\rule[-1.5ex]{0pt}{0pt}$ \\
\hline
$E_7(a_1)$ & $5280$ & $\displaystyle \begin{aligned}[t] &\rule{0pt}{4.3ex}\frac{117649}{360}m^7-\frac{16807}{15}m^6+\frac{112847}{72}m^5-\frac{3430}{3}m^4 \\[0.6ex] &\quad +\frac{41209}{90}m^3-\frac{476}{5}m^2+8m\rule[-2.6ex]{0pt}{0pt} \end{aligned}$ \\
$E_7(a_2)$ & $5700$ & $\displaystyle \begin{aligned}[t] &\rule{0pt}{4.3ex}\frac{2592}{7}m^7-1296m^6+1854m^5-1386m^4 \\[0.6ex] &\quad +\frac{1141}{2}m^3-\frac{245}{2}m^2+\frac{75}{7}m\rule[-2.6ex]{0pt}{0pt} \end{aligned}$ \\
$E_7(a_3)$ & $6688$ & $\displaystyle \begin{aligned}[t] &\rule{0pt}{4.3ex}\frac{28125}{56}m^7-1875m^6+\frac{23125}{8}m^5-\frac{14125}{6}m^4 \\[0.6ex] &\quad +\frac{2135}{2}m^3-\frac{766}{3}m^2+\frac{176}{7}m\rule[-2.6ex]{0pt}{0pt} \end{aligned}$ \\
$E_7(a_4)$ & $8192$ & $\displaystyle \rule{0pt}{3.2ex}729m^7-2916m^6+4860m^5-4320m^4+2160m^3-576m^2+64m\rule[-1.5ex]{0pt}{0pt}$ \\
\hline
$E_8(a_1)$ & $33\,592$ & $\displaystyle \begin{aligned}[t] &\rule{0pt}{4.3ex}\frac{55296}{35}m^8-\frac{32256}{5}m^7+\frac{55296}{5}m^6-10304m^5 \\[0.6ex] &\quad +\frac{84376}{15}m^4-\frac{8974}{5}m^3+\frac{32429}{105}m^2-22m\rule[-2.6ex]{0pt}{0pt} \end{aligned}$ \\
$E_8(a_2)$ & $38\,896$ & $\displaystyle \begin{aligned}[t] &\rule{0pt}{4.3ex}\frac{125000}{63}m^8-\frac{25000}{3}m^7+\frac{132500}{9}m^6-\frac{42500}{3}m^5 \\[0.6ex] &\quad +\frac{144175}{18}m^4-\frac{5315}{2}m^3+\frac{3349}{7}m^2-36m\rule[-2.6ex]{0pt}{0pt} \end{aligned}$ \\
$E_8(a_3)$ & $46\,305$ & $\displaystyle \begin{aligned}[t] &\rule{0pt}{3.2ex}2592m^8-11232m^7+20520m^6-20472m^5 \\[0.6ex] &\quad +12038m^4-4170m^3+\frac{1575}{2}m^2-\frac{125}{2}m\rule[-2.6ex]{0pt}{0pt} \end{aligned}$ \\
$E_8(a_4)$ & $46\,475$ & $\displaystyle \begin{aligned}[t] &\rule{0pt}{4.3ex}\frac{177147}{64}m^8-\frac{98415}{8}m^7+\frac{741393}{32}m^6-\frac{383211}{16}m^5 \\[0.6ex] &\quad +\frac{938439}{64}m^4-\frac{84951}{16}m^3+\frac{16815}{16}m^2-\frac{175}{2}m\rule[-2.6ex]{0pt}{0pt} \end{aligned}$ \\
$E_8(a_5)$ & $52\,003$ & $\displaystyle \begin{aligned}[t] &\rule{0pt}{4.3ex}\frac{421875}{128}m^8-\frac{478125}{32}m^7+\frac{1839375}{64}m^6-\frac{243125}{8}m^5 \\[0.6ex] &\quad +\frac{7322225}{384}m^4-\frac{227035}{32}m^3+\frac{139063}{96}m^2-\frac{1001}{8}m\rule[-2.6ex]{0pt}{0pt} \end{aligned}$ \\
$E_8(a_6)$ & $64\,827$ & $\displaystyle \begin{aligned}[t] &\rule{0pt}{4.3ex}\frac{78125}{16}m^8-\frac{46875}{2}m^7+\frac{384375}{8}m^6-\frac{218125}{4}m^5 \\[0.6ex] &\quad +\frac{592125}{16}m^4-\frac{60075}{4}m^3+3375m^2-324m\rule[-2.6ex]{0pt}{0pt} \end{aligned}$ \\
$E_8(a_7)$ & $67\,200$ & $\displaystyle \begin{aligned}[t] &\rule{0pt}{3.2ex}5184m^8-25056m^7+51732m^6-59136m^5 \\[0.6ex] &\quad +40415m^4-16510m^3+3732m^2-360m\rule[-1.5ex]{0pt}{0pt} \end{aligned}$ \\
$E_8(a_8)$ & $102\,014$ & $\displaystyle \begin{aligned}[t] &\rule{0pt}{3.2ex}10935m^8-58320m^7+134136m^6-172530m^5 \\[0.6ex] &\quad +134055m^4-62910m^3+16500m^2-1865m\rule[-1.5ex]{0pt}{0pt} \end{aligned}$ \\
\hline
\end{tabular}
\caption{Zeta polynomials $Z_\Gamma(m)$ of the intervals $[1,w]$ for the proper quasi-Coxeter classes $\Gamma$ of types $F_4$, $E_6$, $E_7$, and $E_8$.}
\label{tab:exceptional-zeta.Weyl}
\end{table}

\begin{table}[tbp]
\centering
\scriptsize
\setlength{\tabcolsep}{4pt}
\begin{tabular}{l|r|l}
\hline
class & $|[1,w]|$ & $Z_\Gamma(m)$\rule[-1.2ex]{0pt}{3.4ex} \\
\hline
$H_3(a_1)$ & $32$ & $\displaystyle \rule{0pt}{4.3ex}\frac{25}{3}m^3-10m^2+\frac{8}{3}m\rule[-2.6ex]{0pt}{0pt}$ \\
$H_3(a_2)$ & $32$ & $\displaystyle \rule{0pt}{3.2ex}9m^3-12m^2+4m\rule[-1.5ex]{0pt}{0pt}$ \\
\hline
$H_4(a_1)$ & $280$ & $\displaystyle \rule{0pt}{4.3ex}\frac{225}{4}m^4-105m^3+\frac{241}{4}m^2-\frac{21}{2}m\rule[-2.6ex]{0pt}{0pt}$ \\
$H_4(a_2)$ & $286$ & $\displaystyle \rule{0pt}{4.3ex}\frac{200}{3}m^4-140m^3+\frac{286}{3}m^2-21m\rule[-2.6ex]{0pt}{0pt}$ \\
$H_4(a_3)$ & $286$ & $\displaystyle \rule{0pt}{4.3ex}\frac{200}{3}m^4-140m^3+\frac{286}{3}m^2-21m\rule[-2.6ex]{0pt}{0pt}$ \\
$H_4(a_4)$ & $323$ & $\displaystyle \rule{0pt}{4.3ex}\frac{675}{8}m^4-\frac{765}{4}m^3+\frac{1149}{8}m^2-\frac{143}{4}m\rule[-2.6ex]{0pt}{0pt}$ \\
$H_4(a_5)$ & $323$ & $\displaystyle \rule{0pt}{4.3ex}\frac{675}{8}m^4-\frac{765}{4}m^3+\frac{1149}{8}m^2-\frac{143}{4}m\rule[-2.6ex]{0pt}{0pt}$ \\
$H_4(a_6)$ & $294$ & $\displaystyle \rule{0pt}{3.2ex}72m^4-156m^3+110m^2-25m\rule[-1.5ex]{0pt}{0pt}$ \\
$H_4(a_7)$ & $332$ & $\displaystyle \rule{0pt}{4.3ex}\frac{375}{4}m^4-225m^3+\frac{735}{4}m^2-\frac{103}{2}m\rule[-2.6ex]{0pt}{0pt}$ \\
$H_4(a_8)$ & $432$ & $\displaystyle \rule{0pt}{3.2ex}125m^4-300m^3+240m^2-64m\rule[-1.5ex]{0pt}{0pt}$ \\
$H_4(a_9)$ & $432$ & $\displaystyle \rule{0pt}{3.2ex}125m^4-300m^3+240m^2-64m\rule[-1.5ex]{0pt}{0pt}$ \\
$H_4(a_{10})$ & $404$ & $\displaystyle \rule{0pt}{3.2ex}135m^4-360m^3+336m^2-110m\rule[-1.5ex]{0pt}{0pt}$ \\
\hline
\end{tabular}
\caption{Zeta polynomials $Z_\Gamma(m)$ of the intervals $[1,w]$ for the proper quasi-Coxeter classes $\Gamma$ of types $H_3$ and $H_4$.}
\label{tab:exceptional-zeta.H}
\end{table}

\section{Interval monoids}
\label{sec:interval-monoid}

In this section we prove \Cref{thm:main-noncancellative}, using the facts on interval monoids and their parabolic submonoids recalled in \Cref{subsec:interval-groups}.
By \Cref{prop:parabolic-submonoid}, the proof of \Cref{thm:main-noncancellative} has two steps.
First, in \Cref{subsec:D5}, we find an explicit failure of cancellation for the proper quasi-Coxeter element $w_{3,2}$ of $D_5$.
Then, in \Cref{subsec:proof-a}, we show that every proper quasi-Coxeter element $w$ of type $D_n$ with $n \geq 5$, or of type $E_6$, $E_7$, or $E_8$, lies above an element $x$ whose interval $[1,x]$ is isomorphic to $[1,w_{3,2}]$ as a labeled poset.
The monoid $M_x$ is then isomorphic to $M_{w_{3,2}}$, so it is not cancellative, and by \Cref{prop:parabolic-submonoid} neither is $M_w$.

\subsection{A counterexample in type \texorpdfstring{$D_5$}{D5}}
\label{subsec:D5}

Let $W$ be of type $D_5$ and let $w = w_{3,2} = [1,2,3][4,5]$.
We give names to seven reflections:
\begin{gather*}
  a = ((1,-2)),\quad b = ((4,5)),\quad c = ((3,4)),\quad d = ((4,-5)), \\
  e = ((3,5)),\quad f = ((3,-5)),\quad g = ((1,-4));
\end{gather*}
and we consider the words
\[
  X = abcad, \qquad P = Xe = abcade, \qquad Q = fX = fabcad.
\]

\begin{proposition}\label{prop:D5-counterexample}
    In $M_{w_{3,2}}$ we have $Pg = Qg$, but $P \neq Q$.
    Hence $M_{w_{3,2}}$ is neither left nor right cancellative.
    Since $P$ and $Q$ have the same image in the interval group $G_{w_{3,2}}$, the natural map $M_{w_{3,2}} \to G_{w_{3,2}}$ is not injective.
\end{proposition}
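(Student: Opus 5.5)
The proof has two independent parts: first show $Pg = Qg$ by an explicit chain of rank-two relations, then show $P \neq Q$ with an invariant preserved by every defining relation of $M_{w_{3,2}}$. Throughout I use the presentation of \Cref{lem:rank-two-presentation}. The generators are the reflections, and a relation $ab = a'b'$ is imposed whenever $ab$ is a rank-two element below $w = [1,2,3][4,5]$. Rank-two elements below $w$ correspond to annular noncrossing partitions of rank two, which makes it easy to decide, for any pair of letters, whether a swap $xy = y'x'$ with $y' = xyx^{-1}$, or $x' = y^{-1}xy$, is available.

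For the equality $Pg = Qg$, I would work with words of length seven and find an explicit sequence of rank-two rewrites from $abcadeg$ to $fabcadg$. This sequence is the certificate the computer search would produce. The natural strategy is to move the final $g$ leftward through $e$, then through $d, a, c, b, a$, by Hurwitz-type moves. Each move turns a length-two factor $yg$ into $g' y'$, where the new letters are conjugates. After $g$ reaches the front and is suitably conjugated, the remaining word should be transformed so that it ends in $g$ with $f$ at the front. Equivalently, I would show that $Xeg$ and $fXg$ are both equal to a common intermediate word. At every step I must check that the two letters being rewritten multiply to an element of rank two below $w$, i.e. that the corresponding pair of blocks lies in $\annularncD(3,2)$. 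The resulting equality holds in $M_w$ and therefore in $G_w$. Since $g$ is invertible in $G_w$, we get $P = Q$ in $G_w$, which gives the non-injectivity claim.

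For $P \neq Q$, the main obstacle is that all defining relations are length-preserving, so length is useless as an invariant. The monoid element is determined by the equivalence class of the word under local rank-two rewrites, and words of length six over $|\refl| = 40$ letters can be enumerated. The cleanest hand-checkable argument compares left divisors: compute the set of reflections $t$ such that the word can be rewritten to start with $t$. For $Q$ this set contains $f$. I would try to show that no word equivalent to $P$ starts with $f$. The hard part is controlling the whole class without enumerating it. First I would project to $W$: both words map to the same element, and that element is not below $w$ since its length exceeds $5$, so this projection cannot separate them.

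Next I would project to the interval monoid of a smaller element, or use a homomorphism from $M_w$ to some simpler monoid, for instance a monoid recording, for each letter position, in which block of the annulus the letter lives. If no such homomorphism is apparent, I would fall back on a closure computation. Starting from $P$, I would repeatedly apply all applicable rank-two rewrites, and I would organize the argument so that the positions of $e$ and $a$ force every equivalent word to end in a reflection from a small set not containing what $Q$'s class requires. In practice I expect the honest proof to present the (small) equivalence class of $P$, or an invariant cutting it down, and verify that $Q$ is not in it.

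Finally, the failure of left and right cancellativity follows directly. $Pg = Qg$ with $P \ne Q$ is a failure of right cancellation. For left cancellation, I would apply the anti-automorphism $x \mapsto x^{-1}$ composed with conjugation by $w$, which reverses words and preserves the presentation (rank-two relations $ab = a'b'$ become $b^{-1}a^{-1}$-type relations among reflections below $w$ after Kreweras-type conjugation). Applying it to $Pg = Qg$ gives $g'P' = g'Q'$ with $P' \neq Q'$.
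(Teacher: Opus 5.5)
\textbf{Verdict.} There is a genuine gap: your left-cancellation step fails as written, and the two central claims are planned but never established.

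\textbf{The left-cancellation map.} Reversing a word and conjugating every letter by $w$ sends a relation whose two sides multiply to $y\le w$ to one whose sides multiply to $wy^{-1}w^{-1}$. This element lies below $w$ if and only if $y^{-1}\le w$. Since conjugation by $w$ fixes $w^{-1}$, your composite carries $[1,w]$ onto $[1,w^{-1}]$, not onto $[1,w]$.

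Concretely, take the relation $((1,2))((2,3))=((2,3))((1,3))$ of $M_{w_{3,2}}$, with product $((1,2,3))\le w_{3,2}$. Your map sends it to $((1,-3))((2,3))=((1,-2))((1,-3))$. The common product $((2,-1,3))$ runs against the orientation of the outer circle and is not below $w_{3,2}$. So no relation applies to either two-letter word, the two words are distinct in $M_{w_{3,2}}$, and your map is not well defined.

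What you need is an element $h$ with $hw^{-1}h^{-1}=w$, so that $y\le w$ if and only if $hy^{-1}h^{-1}\le w$. The paper takes the involution $h=((1,3))((4,5))$, which satisfies $hw_{3,2}h=w_{3,2}^{-1}$, and uses the map $t_1\cdots t_k\mapsto(ht_kh)\cdots(ht_1h)$.

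\textbf{The equality $Pg=Qg$ is not exhibited.} You describe what a certificate would look like but do not supply one; an explicit chain of rank-two moves has to be written down, and the paper gives $23$. Your heuristic of pushing $g$ to the left accounts only for the first six moves. After them, $g$ sits at the front as $((1,-3))$, and a second sweep is still needed to bring $f$ to the front and rebuild $Xg$.

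\textbf{The inequality $P\neq Q$ is not proved.} You search for invariants or homomorphisms, but none is needed. The missing observation is that the class of $P$ is tiny.
\begin{itemize}
\item In $abcade$ the only applicable relations are the commutations $ab=ba$ and $ad=da$. These products have no other reduced factorizations.
\item In each resulting word $abcade$, $bacade$, $abcdae$, $bacdae$, every other adjacent pair ($bc$, $ca$, $ac$, $cd$, $de$, $ae$) multiplies to an element not below $w_{3,2}$, for example $bc=((3,5,4))$ and $ca=((1,-2))((3,4))$.
\end{itemize}
So the class of $P$ consists of exactly these four words. None of them starts with $f$, which proves $P\neq Q$.

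\textbf{Minor slips.} $D_5$ has $20$ reflections, not $40$. No element of $D_5$ has reflection length exceeding $5$, so your remark about projecting to $W$ is inaccurate, though harmless.
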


\begin{table}[ht]
\centering
\small
\renewcommand{\arraystretch}{1.15}
\begin{tabular}{c|c}
\hline
pair $st$ & $y = st$ \\
\hline
$bc$ & $((3,5,4))$ \\
$ca$ & $((1,-2))((3,4))$ \\
$ac$ & $((1,-2))((3,4))$ \\
$cd$ & $((3,4,-5))$ \\
$de$ & $((3,-4,5))$ \\
$ae$ & $((1,-2))((3,5))$ \\
\hline
\end{tabular}
\caption{The pairs of adjacent letters $st$ in the words of the class of $P$ to which no relation of $M_{w_{3,2}}$ applies, since $y \not\le w_{3,2}$.}
\label{tab:D5-blocked}
\end{table}

\begin{proof}
    The reflection $a$ commutes with $b$ and with $d$, and $ab$ and $ad$ lie below $w_{3,2}$, so the relations $ab = ba$ and $ad = da$ hold in $M_{w_{3,2}}$.
    Hence the class of $P$ contains the four words
    \[
        abcade, \qquad bacade, \qquad abcdae, \qquad bacdae.
    \]
    We claim that there are no other words in the class.
    By \Cref{lem:rank-two-presentation}, it is enough to look at the relations that apply to pairs of adjacent letters in these four words.
    The element $ab$ has no reduced factorizations other than $ab$ and $ba$, and similarly for $ad$.
    The remaining pairs of adjacent letters are $bc$, $ca$, $ac$, $cd$, $de$, and $ae$, and \Cref{tab:D5-blocked} shows that none of the corresponding products lies below $w_{3,2}$, so that no relation applies to them.
    This proves the claim.
    Since no word in the class of $P$ begins with $f$, we conclude that $P \neq Q$ in $M_{w_{3,2}}$.

    \Cref{tab:D5-chain} lists $23$ elementary relations that transform $Pg$ into $Qg$.
    In each step, two adjacent letters $st$ are replaced by another reduced factorization of $y = st$, and one checks that $y \le w_{3,2}$.
    Therefore, $Pg = Qg$ in $M_{w_{3,2}}$.

    For left cancellation, let $h = ((1,3))((4,5))$, so that $h w_{3,2} h = w_{3,2}^{-1}$, and hence $x \le w_{3,2}$ if and only if $h x^{-1} h \le w_{3,2}$.
    Therefore, the map $t_1 \cdots t_k \mapsto (h t_k h) \cdots (h t_1 h)$ is an anti-automorphism of $M_{w_{3,2}}$, which turns the equality $Pg = Qg$ into a failure of left cancellation.

    Finally, the equality $Pg = Qg$ implies $P = Q$ in the group $G_{w_{3,2}}$, while $P \neq Q$ in the monoid $M_{w_{3,2}}$, hence $M_{w_{3,2}} \to G_{w_{3,2}}$ is not injective.
\end{proof}

\begin{table}[tp]
\centering
\scriptsize
\renewcommand{\arraystretch}{1.12}
\setlength{\tabcolsep}{2.2pt}
\begin{tabular}{r|ccccccc|l}
\hline
 & \multicolumn{7}{c|}{word} & rank-$2$ element\\
\hline
$Pg$ & $((1,-2))$ & $((4,5))$ & $((3,4))$ & $((1,-2))$ & $((4,-5))$ & \underline{$((3,5))$} & \underline{$((1,-4))$} & $((1,-4))((3,5))$ \\
1 & $((1,-2))$ & $((4,5))$ & $((3,4))$ & $((1,-2))$ & \underline{$((4,-5))$} & \underline{$((1,-4))$} & $((3,5))$ & $((1,5,-4))$ \\
2 & $((1,-2))$ & $((4,5))$ & $((3,4))$ & \underline{$((1,-2))$} & \underline{$((1,-4))$} & $((1,5))$ & $((3,5))$ & $((1,-4,-2))$ \\
3 & $((1,-2))$ & $((4,5))$ & \underline{$((3,4))$} & \underline{$((2,4))$} & $((1,-2))$ & $((1,5))$ & $((3,5))$ & $((2,3,4))$ \\
4 & $((1,-2))$ & \underline{$((4,5))$} & \underline{$((2,3))$} & $((3,4))$ & $((1,-2))$ & $((1,5))$ & $((3,5))$ & $((2,3))((4,5))$ \\
5 & \underline{$((1,-2))$} & \underline{$((2,3))$} & $((4,5))$ & $((3,4))$ & $((1,-2))$ & $((1,5))$ & $((3,5))$ & $((1,-2,-3))$ \\
6 & $((1,-3))$ & $((1,-2))$ & $((4,5))$ & $((3,4))$ & \underline{$((1,-2))$} & \underline{$((1,5))$} & $((3,5))$ & $((1,5,-2))$ \\
7 & $((1,-3))$ & $((1,-2))$ & $((4,5))$ & \underline{$((3,4))$} & \underline{$((1,5))$} & $((2,-5))$ & $((3,5))$ & $((1,5))((3,4))$ \\
8 & $((1,-3))$ & $((1,-2))$ & \underline{$((4,5))$} & \underline{$((1,5))$} & $((3,4))$ & $((2,-5))$ & $((3,5))$ & $((1,4,5))$ \\
9 & $((1,-3))$ & \underline{$((1,-2))$} & \underline{$((1,5))$} & $((1,4))$ & $((3,4))$ & $((2,-5))$ & $((3,5))$ & $((1,5,-2))$ \\
10 & \underline{$((1,-3))$} & \underline{$((1,5))$} & $((2,-5))$ & $((1,4))$ & $((3,4))$ & $((2,-5))$ & $((3,5))$ & $((1,5,-3))$ \\
11 & $((3,-5))$ & $((1,-3))$ & $((2,-5))$ & $((1,4))$ & $((3,4))$ & \underline{$((2,-5))$} & \underline{$((3,5))$} & $((2,-5,-3))$ \\
12 & $((3,-5))$ & $((1,-3))$ & $((2,-5))$ & $((1,4))$ & \underline{$((3,4))$} & \underline{$((2,-3))$} & $((2,-5))$ & $((2,-4,-3))$ \\
13 & $((3,-5))$ & $((1,-3))$ & $((2,-5))$ & \underline{$((1,4))$} & \underline{$((2,-4))$} & $((3,4))$ & $((2,-5))$ & $((1,4,-2))$ \\
14 & $((3,-5))$ & $((1,-3))$ & \underline{$((2,-5))$} & \underline{$((1,-2))$} & $((1,4))$ & $((3,4))$ & $((2,-5))$ & $((1,5,-2))$ \\
15 & $((3,-5))$ & \underline{$((1,-3))$} & \underline{$((1,-2))$} & $((1,5))$ & $((1,4))$ & $((3,4))$ & $((2,-5))$ & $((1,-2,-3))$ \\
16 & $((3,-5))$ & $((1,-2))$ & $((2,3))$ & \underline{$((1,5))$} & \underline{$((1,4))$} & $((3,4))$ & $((2,-5))$ & $((1,4,5))$ \\
17 & $((3,-5))$ & $((1,-2))$ & \underline{$((2,3))$} & \underline{$((4,5))$} & $((1,5))$ & $((3,4))$ & $((2,-5))$ & $((2,3))((4,5))$ \\
18 & $((3,-5))$ & $((1,-2))$ & $((4,5))$ & $((2,3))$ & \underline{$((1,5))$} & \underline{$((3,4))$} & $((2,-5))$ & $((1,5))((3,4))$ \\
19 & $((3,-5))$ & $((1,-2))$ & $((4,5))$ & \underline{$((2,3))$} & \underline{$((3,4))$} & $((1,5))$ & $((2,-5))$ & $((2,3,4))$ \\
20 & $((3,-5))$ & $((1,-2))$ & $((4,5))$ & $((3,4))$ & $((2,4))$ & \underline{$((1,5))$} & \underline{$((2,-5))$} & $((1,5,-2))$ \\
21 & $((3,-5))$ & $((1,-2))$ & $((4,5))$ & $((3,4))$ & \underline{$((2,4))$} & \underline{$((1,-2))$} & $((1,5))$ & $((1,-4,-2))$ \\
22 & $((3,-5))$ & $((1,-2))$ & $((4,5))$ & $((3,4))$ & $((1,-2))$ & \underline{$((1,-4))$} & \underline{$((1,5))$} & $((1,5,-4))$ \\
$Qg$ & $((3,-5))$ & $((1,-2))$ & $((4,5))$ & $((3,4))$ & $((1,-2))$ & $((4,-5))$ & $((1,-4))$ &  \\
\hline
\end{tabular}
\caption{A chain of $23$ elementary relations of $M_{w_{3,2}}$ transforming $Pg$ into $Qg$ (\Cref{prop:D5-counterexample}). In each step, the underlined pair $st$ is replaced with the other factorization shown in the next row, and the last column records the element $y = st \le w_{3,2}$ of rank two.}
\label{tab:D5-chain}
\end{table}

\begin{remark}\label{rem:D5-minimality}
    The counterexample was found by a computer search.
    An exhaustive computation shows that this is the smallest possible counterexample to cancellativity in $M_{w_{3,2}}$.
\end{remark}

\subsection{Types \texorpdfstring{$D_n$}{Dn} with \texorpdfstring{$n \geq 5$}{n >= 5} and \texorpdfstring{$E_n$}{En}}
\label{subsec:proof-a}

We now deduce \Cref{thm:main-noncancellative} from \Cref{prop:D5-counterexample}.

\begin{theorem}\label{thm:noncancellative-D}
    Let $n \geq 5$ and let $w$ be a proper quasi-Coxeter element of a Coxeter group $W$ of type $D_n$.
    Then $M_w$ is neither left nor right cancellative, and the natural map $M_w \to G_w$ is not injective.
\end{theorem}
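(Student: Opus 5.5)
Up to conjugation we may take $w = w_{p,q} = [1,\dots,p][p+1,\dots,n]$ with $p+q=n$; conjugation by an element of $W$ induces a label-preserving isomorphism of intervals, hence an isomorphism of interval monoids and groups. Properness excludes the Coxeter case, so we may assume $p,q\ge 2$. Since $n\ge 5$, after possibly swapping the roles of the two circles (conjugating by a suitable signed permutation exchanging them) we may assume $p\ge 3$ and $q\ge 2$. The plan is to find $x\le w$ with $[1,x]\cong[1,w_{3,2}]$ as labeled posets. Then $M_x\cong M_{w_{3,2}}$ (the presentation of \Cref{lem:rank-two-presentation} only depends on the labeled interval), so \Cref{prop:D5-counterexample} together with \Cref{prop:parabolic-submonoid} gives that $M_w$ is neither left nor right cancellative. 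Non-injectivity of $M_w\to G_w$ then follows formally: if $Pg=Qg$ in $M_w$ but $P\ne Q$, the images of $P$ and $Q$ agree in the group $G_w$.

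The candidate is
\[
x=[1,2,3][p+1,p+2]\in D_n,
\]
obtained from $w$ by deleting the outer points $4,\dots,p$ and the inner points $p+3,\dots,n$. To show $x\le w$, write $[1,\dots,p]=[1,2,3]\cdot((3,4,\dots,p))$ and similarly for the inner cycle; the lengths add in $B_n$ (and hence in $D_n$, since reflection length agrees), as $\ell_\refl([1,\dots,k])=k$. Alternatively, check that the annular partition with zero-block $\{\pm1,\pm2,\pm3,\pm(p+1),\pm(p+2)\}$ and singletons elsewhere lies in $\annularncD(p,q)$: it is drawable noncrossing and its zero-block meets both circles.

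Next, $x$ lies in the parabolic subgroup $W'$ of type $D_5$ acting on the coordinates $\{1,2,3,p+1,p+2\}$, where it is a proper quasi-Coxeter element conjugate to $w_{3,2}$ by the relabeling $p+1\mapsto4$, $p+2\mapsto5$. By \Cref{lem:mov-embedding}, $[1,x]=[1,w]\cap W_x\subseteq W'$, and the absolute order of $W'$ is the restriction of that of $W$, because reflection length in a parabolic subgroup agrees with that in $W$. Hence the relabeling bijection gives a label-preserving isomorphism $[1,x]\cong[1,w_{3,2}]$.

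The main care points are checking that $x\le w$ and justifying that intervals computed in $D_n$ and in the parabolic $D_5$ coincide, including the reduction to $p\ge3$ by symmetry. All of these are routine given the preliminaries.
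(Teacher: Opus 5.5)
Your proposal is correct and follows the paper's proof. You reduce to $w_{p,q}$ with $p\ge 3$, $q\ge 2$, take $x=[1,2,3][p+1,p+2]\le w$, identify $[1,x]$ with $[1,w_{3,2}]$ as labeled posets by relabeling coordinates, and transfer the $D_5$ counterexample to $M_w$ via \Cref{prop:parabolic-submonoid}. You also supply details that the paper only asserts, namely the length computation showing $x\le w$ and the parabolic-subgroup argument that the interval of $x$ computed in $D_n$ agrees with the one computed in $D_5$.
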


\begin{proof}
    We may assume that $w = w_{p,q}$ with $p \geq 3$ and $q \geq 2$.
    Let $x = [1,2,3]\,[p+1,p+2]$, which lies below $w$.
    Relabeling the coordinates $1, 2, 3, p+1, p+2$ as $1, \dots, 5$ identifies the interval $[1,x]$, together with the labels of its cover relations, with $[1,w_{3,2}] \cong \annularncD(3,2)$, so that $M_x \cong M_{w_{3,2}}$.
    By \Cref{prop:D5-counterexample}, the monoid $M_x$ is neither left nor right cancellative, and \Cref{prop:parabolic-submonoid} transfers this to $M_w$.
    The last statement follows as well.
\end{proof}

\begin{theorem}\label{thm:noncancellative-E}
    Let $w$ be a proper quasi-Coxeter element of a Coxeter group of type $E_6$, $E_7$, or $E_8$.
    Then $M_w$ is neither left nor right cancellative, and the natural map $M_w \to G_w$ is not injective.
\end{theorem}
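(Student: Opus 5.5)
The plan follows the strategy announced at the beginning of \Cref{sec:interval-monoid}. For each of the conjugacy classes $E_6(a_1)$, $E_6(a_2)$, $E_7(a_1),\dots,E_7(a_4)$, $E_8(a_1),\dots,E_8(a_8)$, we fix a representative $w$. We then look for an element $x \le w$ of rank five whose parabolic closure $W_x$ is of type $D_5$, and such that $x$ is a proper quasi-Coxeter element of $W_x$.

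Given such an $x$, we use the classification of quasi-Coxeter classes of $D_5$ by the elements $w_{p,q}$. The only proper classes are $w_{3,2}$ and $w_{4,1}$, and $w_{4,1}$ is a Coxeter element, so $x$ is conjugate in $W_x \cong D_5$ to $w_{3,2}$. Conjugation by an element of $W_x$ preserves reflections and reflection length inside $W_x$. By \Cref{lem:mov-embedding}, we have $[1,w]\cap W_x = [1,x]$, so $[1,x]$ is computed inside $W_x$. Hence conjugation identifies $[1,x]$, as an edge-labeled poset, with $[1,w_{3,2}]$, and therefore $M_x \cong M_{w_{3,2}}$. By \Cref{prop:D5-counterexample}, $M_x$ is neither left nor right cancellative. \Cref{prop:parabolic-submonoid} then transfers both failures to $M_w$. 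Non-injectivity of $M_w \to G_w$ follows because $Pg = Qg$ in $M_w$ while $P \ne Q$ in $M_w$, whereas $P = Q$ in the group $G_w$.

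The real content is the existence of $x$. To find it, I would first reduce to type $E_6$ where possible. If a proper quasi-Coxeter class of $E_7$ or $E_8$ lies above a proper quasi-Coxeter element of a parabolic $E_6$, then transitivity of $\le$ reduces to the $E_6$ case. This would still leave the classes not above such elements to be handled directly.

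Concretely, I would search by computer over reduced reflection factorizations $w = t_1\cdots t_n$, using the Hurwitz action, which is transitive here. For each factorization, I take the prefixes $x = t_1\cdots t_5$ together with the conjugates obtained by cyclic shifts. For each such $x$, I check three things: that $\langle t_1,\dots,t_5\rangle$ is a reflection subgroup of type $D_5$; that it is parabolic, which holds automatically for prefixes of a reduced factorization of a quasi-Coxeter element; and that $x$ is not a Coxeter element of it. The last check can be done by comparing the order of $x$, which is $6$ for $w_{3,2}$ against $8$ for a Coxeter element. Carter diagrams suggest the candidates. For example, $E_6(a_2)$ has a Carter diagram containing a $D_4(a_1)$-type cycle, and deleting suitable nodes should leave $D_5(a_1)$.

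The main obstacle is that this existence statement is not structural. Some classes, notably $E_7(a_4)$ and $E_8(a_8)$, which have order $6$ and are far from Coxeter, must be checked to actually dominate a $D_5(a_1)$ element rather than only $D_4(a_1)$ elements. I expect to rely on the computer search for all thirteen classes, and to record an explicit $x$ for each class so that the claim can be verified by hand.
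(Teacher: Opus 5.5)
Your plan is essentially the paper's proof: for each of the fourteen classes (not thirteen), a computer search finds a rank-five $x\le w$ with $(W_x,x)\cong(D_5,w_{3,2})$, and one concludes via $M_x\cong M_{w_{3,2}}$, \Cref{prop:D5-counterexample} and \Cref{prop:parabolic-submonoid}. The only difference is how the pair is identified: the paper uses the invariant $|[1,x]|=204$, checked by exhaustive search to force $(W_x,x)\cong(D_5,w_{3,2})$, whereas you check the type of $\langle t_1,\dots,t_5\rangle$ directly and invoke the classification of quasi-Coxeter classes of $D_5$; however, $w_{3,2}=[1,2,3][4,5]$ has order $\operatorname{lcm}(6,4)=12$, not $6$, so your order test must separate $12$ from the Coxeter order $8$ (searching for order $6$ would find nothing).
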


\begin{proof}
    For each of the $14$ conjugacy classes of proper quasi-Coxeter elements, a computer search finds elements $x \le w$ of rank $5$ such that $|[1,x]| = 204$.
    Again by exhaustive search, one checks that this cardinality forces $(W_x, x)$ to be isomorphic to $(D_5, w_{3,2})$, and we conclude as in the proof of \Cref{thm:noncancellative-D}.
\end{proof}

\section*{Acknowledgments}
We thank Nathan Williams and Hugh Thomas for discussions on compatible reflection orders and EL-shellability, and for permission to include Thomas's argument in \Cref{rem:thomas.proof}.
We thank Christian Stump for helpful discussions and for communicating to us the closed formula for the type~$D$ zeta polynomial in \Cref{thm:zeta.polynomial.D}, which was suggested by AI.
We also thank Georges Neaime for early discussions on the combinatorics of quasi-Coxeter elements.

\section*{AI statement}
In the proof of \Cref{thm:compatible-EL-ABD}, AI helped find the chains of inequalities that exclude pairs of $w$-crossing reflections in types $B$ and $D$.
AI suggested the closed formula for the zeta polynomial of type~$D$ annular noncrossing partitions in \Cref{thm:zeta.polynomial.D}, which we subsequently proved.
AI was also used to generate code for the computations in the exceptional cases (\Cref{sec:exceptional-cases}) and for the search for noncancellativity examples (\Cref{sec:interval-monoid}).
The authors take full responsibility for the content of this paper.

\bibliographystyle{amsalpha-initials}
\bibliography{bibliography}

\end{document}